\newcommand\BibTeX{{\rmfamily B\kern-.05em \textsc{i\kern-.025em b}\kern-.08em
T\kern-.1667em\lower.7ex\hbox{E}\kern-.125emX}}
\newcommand{\diff}[2]{\frac{\mathrm{d} #1}{\mathrm{d} #2}} 			
\newcommand{\bdot}{:}										
\DeclareMathOperator{\dist}{dist}								
\DeclareMathOperator{\dom}{dom}								
\DeclareMathOperator{\spt}{spt}								
\newcommand{\rbr}[1]{\left( #1 \right)} 							
\newcommand{\sbr}[1]{\left[ #1 \right]} 							
\newcommand{\cbr}[1]{\left\{ #1 \right\}} 							
\newcommand{\fixed}[2]{\left. #1 \right|_{#2}} 						
\newcommand{\norm}[1]{\left\lVert#1\right\rVert}	 				
\newcommand{\abs}[1]{\left\lvert #1 \right\rvert} 					
\newcommand{\ipbr}[1]{\left\lfloor #1 \right\rfloor} 							
\newcommand{\txtsub}[1]{_{\text{#1}}} 							
\newcommand{\txtsup}[1]{^{\text{#1}}} 							
\newcommand{\dmn}{\Omega}									
\newcommand{\inprod}[2]{\rbr{ #1, #2 }} 							
\newcommand{\inprodalt}[2]{\langle #1, #2 \rangle} 					
\newcommand{\subeps}{_{\varepsilon}}
\newcommand{\subepsh}{_{\varepsilon_{h}}}
\newcommand{\openset}{A}
\newcommand{\opensetalt}{B}
\newcommand{\prfGamma}{\Gamma\text{-}}	
\newcommand{\meas}{\mu}
\newcommand{\dmndim}{n}									
\newcommand{\rngdim}{n}									
\newcommand{\rngdimalt}{m}									
\newcommand{\symspace}{\mathbb{M}\txtsub{sym}^{\rngdim \times \dmndim}}
\newcommand{\mtxspace}{\mathbb{M}^{\rngdim \times \dmndim}}
\newcommand{\mtxspacealt}{\mathbb{M}^{\rngdimalt \times \dmndim}}
\newcommand{\mtx}{\xi}
\newcommand{\mtxalt}{\eta}
\newcommand{\p}{^{+}}
\newcommand{\m}{^{-}}
\newcommand{\wnabla}{\mathrm{D}}
\newcommand{\wstn}{\mathrm{E}}
\newcommand{\symsup}{\txtsup{s}}
\newcommand{\microgeom}{B}
\newcommand{\cone}{K}
\newcommand{\mcone}{H}
\newcommand{\mden}{g}
\newcommand{\mfun}{G}
\newcommand{\cden}{f}
\newcommand{\cfun}{F}
\newcommand{\sing}{\txtsup{s}}
\newcommand{\jumpp}{\txtsup{j}}
\newcommand{\cantor}{\txtsup{c}}
\renewcommand{\hom}{\txtsub{hom}}
\newcommand{\intvol}[2]{\int_{#1}  #2 \mathrm{d}x} 					
\newcommand{\intsurf}[2]{\int_{#1}  #2 \mathrm{d}\cH^{\dmndim-1}} 		
\newcommand{\at}[1]{\!\rbr{#1}}						 			
\newcommand{\R}[1]{\mathbb{R}^{#1}}
\newcommand{\Z}[1]{\mathbb{Z}^{#1}}
\newcommand{\N}{\mathbb{N}}
\newcommand{\barR}{\overline{\mathbb{R}}}
\newcommand{\cA}{\mathcal{A}}
\newcommand{\cE}{\mathcal{E}}
\newcommand{\cH}{\mathcal{H}}
\newcommand{\cL}{\mathcal{L}}
\newcommand{\cM}{\mathcal{M}}
\newcommand{\cU}{\mathcal{U}}
\newcommand{\mres}{\,\mathbin{\vrule height 1.6ex depth 0pt width
0.13ex\vrule height 0.13ex depth 0pt width 1.3ex}}
\setlist{nosep}
\newtheorem{theorem}{Theorem}[section] 		
\newtheorem{proposition}[theorem]{Proposition}	
\newtheorem{lemma}[theorem]{Lemma}
\newtheorem{corollary}[theorem]{Corollary}
\numberwithin{equation}{section}
\begin{document}


\title{Homogenization of cohesive fracture \\ in masonry structures}

\author{Andrea Braides\\
Department of Mathematics\\
University of Rome Tor Vergata,\\
via della Ricerca Scientifica 1, 00133 Rome, Italy\\
\\
Nicola A. Nodargi\\
 Department of Civil Engineering and Computer Science\\
 University of Rome Tor Vergata\\
 via del Politecnico 1, 00133 Rome, Italy
 }

\date{}




\maketitle

\begin{abstract}
We derive a homogenized mechanical model of a masonry-type structure constituted by a periodic assemblage of blocks with interposed mortar joints. The energy functionals in the model under investigation consist in (i) a linear elastic contribution within the blocks, (ii) a Barenblatt's cohesive contribution at contact surfaces between blocks and (iii) a suitable unilateral condition on the strain across contact surfaces, and are governed by a small parameter representing the typical ratio between the length of the blocks and the dimension of the structure. Using the terminology of $\Gamma$-convergence and within the functional setting supplied by the functions of bounded deformation, we analyze the asymptotic behavior of such energy functionals when the parameter tends to zero, and derive a simple homogenization formula for the limit energy. Furthermore, we highlight the main mathematical and mechanical properties of the homogenized energy, including its non-standard growth conditions under tension or compression. The key point in the limit process is the definition of macroscopic tensile and compressive stresses, which are determined by the unilateral conditions on contact surfaces and the geometry of the blocks.
\end{abstract}

\section{Introduction}
In this work we examine the mechanical modeling of a class of materials referred to as \emph{masonry-like materials}. Such materials are characterized by different behaviors in tension or compression, possibly undergoing fracture. A noteworthy instance in that class is represented by dry masonry, typical of historical buildings, consisting in an assemblage of blocks that are in unilateral contact with each other. The fact that the blocks can be detached at no energy expense results in a fully degenerate overall behavior, with vanishing resistance of the material under tension and the possibility of an unresisted cracking process. Alternative to dry masonry, in building practice it is widespread to interpose mortar joints between bricks. In that case, blocks are still in contact with each other, but a (small) resistance to their detachment is offered by the mortar, which is responsible for cohesive tractions resisting the cracking process. 

A first general mathematical treatment of the dry-masonry problem goes back to the works by Giaquinta and Giusti \cite{Giaquinta_Giusti_ARMA_1985} and Anzellotti \cite{Anzellotti_AIHP_1985}. In those works, in the framework of infinitesimal strain theory and under the assumption of a linear elastic behavior, energy of deformations of the form
\begin{equation}
	\mfun\at{u} = \intvol{\dmn}{\mden\at{P_{\cone^{\perp}}\cE u}}
\label{eq:intro_masonry}
\end{equation} 
are considered, where $\dmn \subset \R\dmndim$ is a reference configuration domain, $\mden$ is a linear elastic energy density and $\cE u$ is the strain associated to the displacement field $u$. The degeneracy of the material under tension is modeled by the introduction of a {\em cone $\cone$ of tensile strains}, such that only the projection $P_{\cone^{\perp}} \cE u$ of the strain onto the dual {\em cone $\cone^\perp$ of compressive strains} determines energy storage. This degenerate behavior implies that the energy of deformation $\mfun$ is not coercive on any reasonable normed space of admissible displacements. However, by replacing the energy of deformation with the total potential energy (i.e.~considering also the contribution of the work of the external forces) and introducing suitable safety conditions (indeed prescribing the external forces to be compressive), coerciveness can be recovered in the space $BD\at\dmn$ of \emph{functions of bounded deformation} on~$\dmn$; that is, the space of functions whose distributional strain $\wstn u$ is a measure \cite{Temam_1985}. Following the direct method of the Calculus of Variations, existence theorems are derived by showing that the total potential energy fulfills a lower-semicontinuity property on $BD\at\dmn$ as well. A mechanical insight in the subject can be found e.g.~in~\cite{DelPiero_1998, Nodargi_Bisegna_2019}.

As an alternative to that  \emph{macroscopic} description of a masonry material, a \emph{microscopic} approach has been adopted in \cite{Braides_ChiadoPiat_2000}. In that approach, the assemblage of blocks interacting through their contact surfaces is regarded as the periodic microstructure of a material whose macroscopic properties are determined by \emph{homogenization}; i.e.,~by analyzing its asymptotic behavior as the characteristic size of the microstructure vanishes. In mathematical terms, a $\rbr{\dmndim-1}$-dimensional periodic closed set $\microgeom$ is introduced, to be scaled by the characteristic size $\varepsilon$ of the microstructure, and the reference configuration $\dmn$ is subdivided into a periodic collection of disconnected sets~$\dmn \setminus \varepsilon\microgeom$. A space of admissible displacements is then considered as a prescription for the block kinematics:
\begin{equation}
	\cU\subeps\at\dmn = \cbr{u \in SBD\at{\dmn} \bdot \,\,J_{u} \subseteq \varepsilon\microgeom, \,\, \rbr{u\p - u\m} \odot \nu_{u} \in \cone_0\,\, \cH^{n-1}\text{-a.e}}, 
\label{eq:intro_adm_displ}
\end{equation}
where $SBD\at\dmn$ is the space of \emph{special functions of bounded deformation} on $\dmn$ (e.g., see \cite{Ambrosio_DalMaso_ARMA_1997}), $J_{u}$ is the jump set of $u$,  $u^{\pm}$ are the traces of $u$ on both sides of $J_{u}$ and $\nu_{u}$ is the unit normal to $J_{u}$. This analytical description translates the fact that the admissible displacements are 
functions whose jumps localize at the interfaces of the blocks and fulfill a unilateral condition prescribed by the cone of matrices $\cone_{0}$. Possible choices for that cone include~$\cone_{0} = \cbr{a \odot b \bdot b = \lambda a, \lambda \geq 0}$ or $\cone_{0} = \cbr{a \odot b \bdot \inprod{a}{b} \geq 0}$, respectively implying an infinitesimal no-sliding condition (i.e., infinite friction assumption) and a detachment condition on the opening of a crack (i.e., vanishing friction assumption). Upon noticing that admissible displacements $u$ are such that $u \in H^1\at{\dmn\setminus\varepsilon\microgeom}$, and hence the strain $\wstn u$ reduces to its absolutely continuous part $\cE u$ with respect to the Lebesgue measure on $\dmn\setminus\varepsilon\microgeom$, the following microscopic linearly elastic energy is considered in~\cite{Braides_ChiadoPiat_2000}: 
\begin{equation}
	\mfun\subeps\at{u} = \frac{1}{2}\intvol{\dmn\setminus\varepsilon\microgeom}{\inprod{A \cE u}{\cE u}}, \quad
	u \in \cU\subeps\at\dmn,
\end{equation}
with $A$ as a fixed fourth-order tensor. A \emph{homogenized energy density} $\mden\hom$ can be defined from $A$ and $B$ by a homogenization formula optimizing over periodic perturbations of a given strain. The function $\mden\hom$ may vanish on a  set of matrices, which we denote as $\cone\hom$, the cone of the \emph{homogenized tensile strains}. Under the assumption that $\mden\hom\at\cdot = \mden\hom\at{P_{\cone\hom^{\perp}}\cdot}$ (which is satisfied in usual examples), it is shown that 
this microscopic approach leads to a masonry-type energy; more precisely, that the family $\rbr{\mfun\subeps}$
$\prfGamma$converges as $\varepsilon \rightarrow 0\p$ to a homogenized energy $\mfun\hom$ on $BD\at\dmn$ of the form
\begin{equation}
	\mfun\hom\at{u} = \intvol{\dmn}{\mden\hom\at{P_{\cone\hom^{\perp}}\cE u}}, \quad
	u \in \cU\hom\at\dmn.
\label{eq:intro_masonry_hom}
\end{equation} The space of \emph{homogenized admissible displacements} $\cU\hom\at\dmn$ is the set of those functions $u \in BD\at\dmn$ such that the projection $P_{\cone\hom^{\perp}}\wstn\sing u$, with $\wstn\sing u$ as the singular part of the strain measure $\wstn u$ and $\cone\hom^{\perp}$ as the cone orthogonal to $\cone\hom$, vanishes. 

As regards the problem of masonry with mortar joints between bricks, many contributions from the mechanical literature could be mentioned, including: macromechanical continuum models based on phenomenological constitutive laws (e.g., \cite{Pela_Roca_CBM_2013, Girardi_Pellegrini_MMS_2018}), micromechanical models (e.g., \cite{Gambarotta_Lagomarsino_EESD_1997, Oliveira_Lourenco_CS_2004}), and homogenized multiscale models (e.g., \cite{Milani_CS_2011, Marfia_Sacco_CMAME_2012, Addessi_Sacco_IJSS_2012}). However, a rigorous mathematical treatment seems to be missing in the framework of homogenization theory for periodic masonry with mortar joints on the space of functions of bounded deformation, i.e.~explicitly considering fractures as stemming from discontinuous displacement fields.

To develop such a homogenization theory, the approach discussed in~\cite{Braides_ChiadoPiat_2000} can be taken as departing point. In particular, accounting for the mortar requires to include some surface energy contribution on the discontinuity set $\varepsilon\microgeom$ at the microscopic level. Here we focus on Barenblatt's model of cohesive fracture, prescribing an isotropic surface energy, positively homogeneous of degree one \cite{Barenblatt_1962}. Accordingly, we are led to considering the family of functionals $\rbr{\cfun\subeps}$ given by
\begin{equation}
	\cfun\subeps\at{u} = \frac{1}{2}\intvol{\dmn\setminus\varepsilon\microgeom}{\inprod{A\cE u}{\cE u}}
		+\intsurf{J_{u}\cap\varepsilon\microgeom}{\abs{u\p-u\m}}, \quad
		u \in \cU\subeps\at\dmn,
\end{equation}
with the space of admissible displacements~$\cU\subeps\at\dmn$ given in~\eqref{eq:intro_adm_displ}.
The main result of the present work consists in proving that the functionals $\rbr{\cfun\subeps}$ $\prfGamma$converge as $\varepsilon \rightarrow 0\p$ to a homogenized energy $\cfun\hom$ on $BD\at\dmn$ given by
\begin{equation}
	\cfun\hom\at{u} = \intvol{\dmn}{\cden\hom\at{\cE u}}
			+\intvol{\dmn}{\cden\hom^{\infty}\at{\diff{\wstn\sing u}{\abs{\wstn\sing u}}}\mathrm{d}\abs{\wstn\sing u}}, \quad
			u \in \cU\hom\at\dmn,
\label{eq:intro_cohesive}
\end{equation}
where $\cden\hom$ is a \emph{cohesive homogenized energy density}, $\cden\hom^{\infty}$ is the recession function of $\cden\hom$, $\wstn\sing u$ denotes the singular part of the strain $\wstn u$ with respect to the Lebesgue measure and the space of homogenized admissible displacements $\cU\hom\at\dmn$ is the same as that appearing in~\eqref{eq:intro_masonry_hom} in the case of dry masonry. Note that in this case no projection structure can be obtained in the limit energy. This allows to remove the assumptions on the cohesive homogenized energy density $\cden\hom$ required for the homogenization result in~\cite{Braides_ChiadoPiat_2000}.
We give an explicit homogenization formula for the energy density $\cden\hom$, which depends on both the microgeometry $\microgeom$ and the cone $\cone_0$. It is noteworthy to observe that such energy density satisfies a non-standard growth condition
\begin{equation}
	c_1 \rbr{\abs{\mtx} - 1} \leq \cden\hom\at\mtx \leq c_2 \abs{\mtx}^2,
\end{equation}
so that the present homogenization theorem does not fit in the framework of any of the general integral representation results as in \cite{Buttazzo_1989, DalMaso_1993}. As for the growth condition from below, that condition cannot be improved. In fact, we prove that the homogenized energy density $\cden\hom$ has sublinear growth over the cone $\cone\hom$ of homogenized tensile stresses as a natural mechanical consequence of the surface Barenblatt's energy contribution at microscopic level. Furthermore, the growth condition from below allows the homogenized energy~$\cfun\hom$ to be regarded as an instance of demi-coercive functionals, as introduced in \cite{Anzellotti_DalMaso_NATMA_1986}, thus implying an existence theorem for the related minimization problem. 

Energies of the form~\eqref{eq:intro_cohesive} have been broadly investigated as the relaxation in the $L^1$-topology of functionals defined on $SBD\at\dmn$ and involving interaction between bulk and surfaces energies (e.g., see \cite{Braides_Coscia_PRSE_1994, Braides_Vitali_AMO_1997, Braides_Vitali_ASNSP_2001}). In particular, connections can be found with the result discussed in~\cite{Braides_Vitali_ASNSP_2001}, where the relaxation of elastic energies with unilateral constraints on the strains is considered. Some contact points can also be recognized with homogenization results discussed in~\cite{Francfort_Giacomini_JEMS_2014, Ansini_ChiadoPiat_BUMI_1999, Stelzig_ESAIM_2012}. Specifically, in~\cite{Francfort_Giacomini_JEMS_2014}, the problem of periodic homogenization in perfect elasto-plasticity is discussed. In~\cite{Ansini_ChiadoPiat_BUMI_1999} the homogenization of integral functionals involving energies concentrated on periodic multidimensional structures and defined on Sobolev spaces with respect to measures is considered. In~\cite{Stelzig_ESAIM_2012}, the homogenization of many-body structures undergoing large displacements and obeying a non-interpenetration constraint is dealt with. As a major difference, the large-displacement framework calls for a functional setting in the space $BV$ (instead of $BD$) and the non-interpenetration constraint translates into a global condition (instead of a local condition governed by the cone $\cone_{0}$). To the best of the authors' knowledge, functionals of the form~\eqref{eq:intro_cohesive} are new in the context of homogenization theory for masonry structures on $BD$. 

The present work is organized as follows. In Section~\ref{s:notation} we set the main notation and collect some definitions and well-known results needed in the subsequent developments. We discuss the problem statement and the main result in Section~\ref{s:main_result}. Section~\ref{s:1d_problem} is devoted to an illustration of the main result in a one-dimensional setting. That discussion is instrumental as it highlights some properties of the homogenized energy density which are explored in the general case in Section~\ref{s:properties}. We give the proof of the main result in the technical Section~\ref{s:proof}. Conclusions and perspectives are outlined in Section~\ref{s:conclusions}.

\section{Notation}\label{s:notation}

We denote by $\inprod{\cdot}{\cdot}$ and $\abs{\cdot}$ the scalar product and the induced norm in $\R\dmndim$, for any $\dmndim \geq 1$. Upon identification with the space $\R{\rngdimalt \dmndim}$, the same notation is also adopted for the vector space $\mtxspacealt$ of $\rngdimalt \times \dmndim$ real matrices. The symbol $\symspace$ is used for the subspace of  symmetric matrices in $\mtxspace$. In particular, for $\mtx \in \mtxspace$, we denote by $\mtx\txtsup{s}$ its symmetric part. Given $\mtx \in \mtxspace$, we write $u_{\mtx}$ for the linear function $u_{\mtx}\at{x} = \mtx{x}$.
Given $a, \, b \in \R{\dmndim}$, the tensor product $a \otimes b$ is the $\dmndim \times \dmndim$ matrix with entries $a_ib_j$ for $i, \, j = 1, \dots{}, \dmndim$. The symmetric tensor product is defined as $a \odot b = \rbr{a\otimes b + b\otimes a}/2$. Note  that $\abs{a \odot b}^2 = \rbr{\abs{a}^2\abs{b}^2 + \inprod{a}{b}^2}/2$. 

The open ball in $\R{\dmndim}$ with radius $\rho$ and center $x$ is denoted by $B_{\rho}\at{x}$. If $\dmn$ is an open set in $\R{\dmndim}$, we denote by $\cA\at\dmn$ the class of open subsets of $\dmn$ and by $\cA_{0}\at\dmn$ the class of bounded open subsets of $\dmn$. Moreover, $L^2\at{\dmn; \R{\rngdimalt}}$ and $H^1\at{\dmn; \R{\rngdimalt}}$ stand for the usual Lebesgue and Sobolev spaces of $\R\rngdimalt$-valued functions. If $\rngdimalt = 1$, we simply write $L^2\at{\dmn}$ and $H^1\at{\dmn}$. The symbol $C^{k}_0\at{\dmn; \R{\rngdimalt}}$, $0 \leq k \leq \infty$ refers to the space of compactly supported smooth functions. We denote by $\cM\at{\dmn; \mtxspace}$ the space of $\mtxspace$-valued Borel measures and by $\cH^{\dmndim-1}$ the $\rbr{\dmndim-1}$-dimensional Haussdorff measure in $\R{\dmndim}$.

For $f \bdot \R\rngdimalt \rightarrow [0, +\infty)$ a convex function and $\meas \in \cM\at{\openset; \mtxspace}$ a measure, we use the notation  $\int_{\openset}{f\at\meas} = \intvol{\openset}{f \rbr{h}} + \int_{\openset}{f^{\infty}\at{\diff{\meas\txtsup{s}}{\abs{\meas\sing}}} \mathrm{d}\!\abs{\meas\sing}}$, where $\meas = h\cL^{\dmndim} + \meas\sing$ is the Lebesgue decomposition of $\meas$ and $f^{\infty}$ is the recession function of $f$ (e.g., see \cite{Rockafellar_1970}) defined by
$$
f^{\infty}(\xi)=\lim_{t\to+\infty} {f(t\xi)\over t}.
$$



\subsection{The space $BD$} 
The space $BD\at\dmn$ of \emph{functions of bounded deformation} is the space of the functions $u \in L^1\at{\dmn; \R\rngdim}$ whose symmetric distributional gradient 
\begin{equation}
	\wstn u = \frac{1}{2}\rbr{\wnabla u + \wnabla u\txtsup{T}}
\end{equation} 
is a measure in $\cM\at{\dmn,\symspace}$. For any $u \in BD\at{\dmn}$, we consider the Radon-Nikodym decomposition of the strain 
\begin{equation}
	\wstn u = \cE u \,\cL^{\dmndim} + \wstn\sing u,
\end{equation}
where $\cE u$ is the density of the absolutely continuous part of $\wstn u$ with respect to $\cL^\dmndim$ and $\wstn\sing u$ is the singular part of $\wstn u$ with respect to $\cL^\rngdim$. We further decompose $\wstn \sing u$ into a jump part $\wstn \jumpp u = \wstn u \mres J_{u}$ and a Cantor part $\wstn \cantor u = \wstn\sing u \mres \rbr{\dmn \setminus J_{u}}$. Spefically, it results that 
\begin{equation}\normalfont
	\wstn\jumpp u = \rbr{u\p - u\m} \odot \nu_{u} \cH^{\rngdim-1}\mres J_{u}
\end{equation}
and $\normalfont \abs{\wstn\cantor u} \at\opensetalt = 0$ on any Borel subset $\opensetalt \subseteq \dmn$ which is $\sigma$-finite with respect to $\cH^{\rngdim-1}$. The space $SBD\at\dmn$ of \emph{special functions of bounded deformation} in $\dmn$ is the space of the functions $u \in BD\at\dmn$ such that $\normalfont \wstn\cantor u$ is the null measure; i.e., satisfying
\begin{equation}
	\wstn u = \cE u \,\cL^{\dmndim} + \rbr{u\p - u\m} \odot \nu_{u} \cH^{\rngdim-1}\mres J_{u}.
\end{equation}

A sequence $\rbr{u_h}$ in $BD\at{\dmn}$ weakly converges to a function $u \in BD\at{\dmn}$ if
$u_h \rightarrow u $ in $L^1\at{\dmn; \R\rngdim}$ and $\rbr{\abs{\wstn u_h}\at\dmn}$ is bounded.

\subsection{$\prfGamma$convergence}
Let $\rbr{X, d}$ be a metric space and $\rbr{F_h}$ be a sequence of functionals from $X$ into $\barR$. We say that $\rbr{F_h}$ $\prfGamma$converges to $F$ in $X$ with respect to the topology induced by $d$ if the following conditions are satisfied:
\begin{enumerate}[label={(\roman*)}]
	\item for every $x \in X$ and for every sequence $\rbr{x_h}$ converging to $x$ in $X$ we have $F\at{x} \leq \liminf_h F_h\at{x_h}$;
	\item for every $x \in X$ there exists a sequence $\rbr{x_h}$ converging to $x$ in $X$ such that $F\at{x} = \lim_h F_h\at{x_h}$.
\end{enumerate}
Under appropriate coercivity conditions, $\prfGamma$convergence guarantees the convergence of the minimum values of the functionals~$F_{h}$ to the minimum value of their $\prfGamma$limit.

\section{Setting of the problem and main result}\label{s:main_result}
Let $\dmn$ be a bounded open set of $\R\dmndim$. For $Y = \rbr{0,1}^\dmndim$ the unit cube of~$\R\dmndim$, we consider a closed rectifiable $Y$-periodic $\rbr{\dmndim-1}$-dimensional subset of $\R\dmndim$, i.e.~such that $\microgeom + k = \microgeom$ for all $k \in \Z{\rngdim}$. Accordingly, $\dmn$ is partitioned into a periodic collection of disconnected sets (Figure~\ref{fig:geometry}(a)). 

We fix a positive definite symmetric linear operator $A \bdot \symspace \rightarrow \symspace$, the scalar product on $\symspace$ given by  $\inprodalt{\mtx}{\mtxalt} = \inprod{A \mtx}{\mtxalt}$ and introduce the associated norm $\norm{\mtx} = \inprodalt{\mtx}{\mtx}^{1/2}$. Note that $\norm{\cdot}$ is equivalent to $\abs{\cdot}$ in $\symspace$; that is,
		\begin{equation}
		\sqrt{\alpha} \abs{\mtx} \leq \norm{\mtx} \leq M \abs{\mtx},
			\label{eq:doppia_stima}
		\end{equation}
	for suitable constants $\alpha, \, M > 0$.

We assume that $\cone_0$ is a closed cone in $\symspace$ consisting of matrices of the form $a \odot b$, satisfying the following convexity assumption:
		\begin{equation}
			a \odot \rbr{b + c} \in \cone_0 \text{ whenever } a \odot b, \, a \odot c \in \cone_0.
		\label{eq:cone_0_hp}
		\end{equation}

For $\varepsilon > 0$, we define the functionals $\cfun\subeps \bdot L^2\at{\dmn; \R\rngdim} \rightarrow \sbr{0, +\infty}$ as
\begin{equation}
	\cfun\subeps\at{u} = \begin{dcases}
		\frac{1}{2}\intvol{\dmn \setminus \varepsilon \microgeom}{\norm{\cE u}^2}
		+ \intsurf{J_u \cap \varepsilon\microgeom}{\abs{u\p-u\m}}, & u \in \cU\subeps\at{\dmn}, \\
		+\infty, \quad &\text{otherwise,}
	\end{dcases}
\label{eq:cohesive_functional}
\end{equation}
where the set $\cU\subeps\at{\dmn}$ of admissible displacements is given by
		\begin{equation}
			\cU\subeps\at\dmn = \cbr{u \in SBD\at{\dmn} \bdot \, 
				J_{u} \subseteq \varepsilon \microgeom, 
				\rbr{u\p-u\m} \odot \nu_u \in \cone\txtsub{0}  \,\, \cH^{n-1}\text{-a.e.}},
		\label{eq:test_displ}
		\end{equation} 
	i.e.~consists of all special functions with bounded deformation whose jump set is contained in~$\varepsilon \microgeom$ and such that the density of the singular part of the strain belongs to the cone~$\cone\txtsub{0}$ (Figure~\ref{fig:geometry}(b)), which describes the admissible singular part of the strain. 

Accordingly, over the set of admissible displacements, the material behaves as a linear elastic medium within brick regions $\dmn\setminus\varepsilon\microgeom$, and a surface energy over interfaces between adjacent bricks, following the Barenblatt's model of cohesive fracture  \cite{Barenblatt_1962}, is considered. Possible fractures localize over such interfaces and produce displacement jumps obeying the unilateral constraint associated to $\cone_0$. 
\begin{figure}
	\centering
	\includegraphics[clip = true, scale = 0.3]{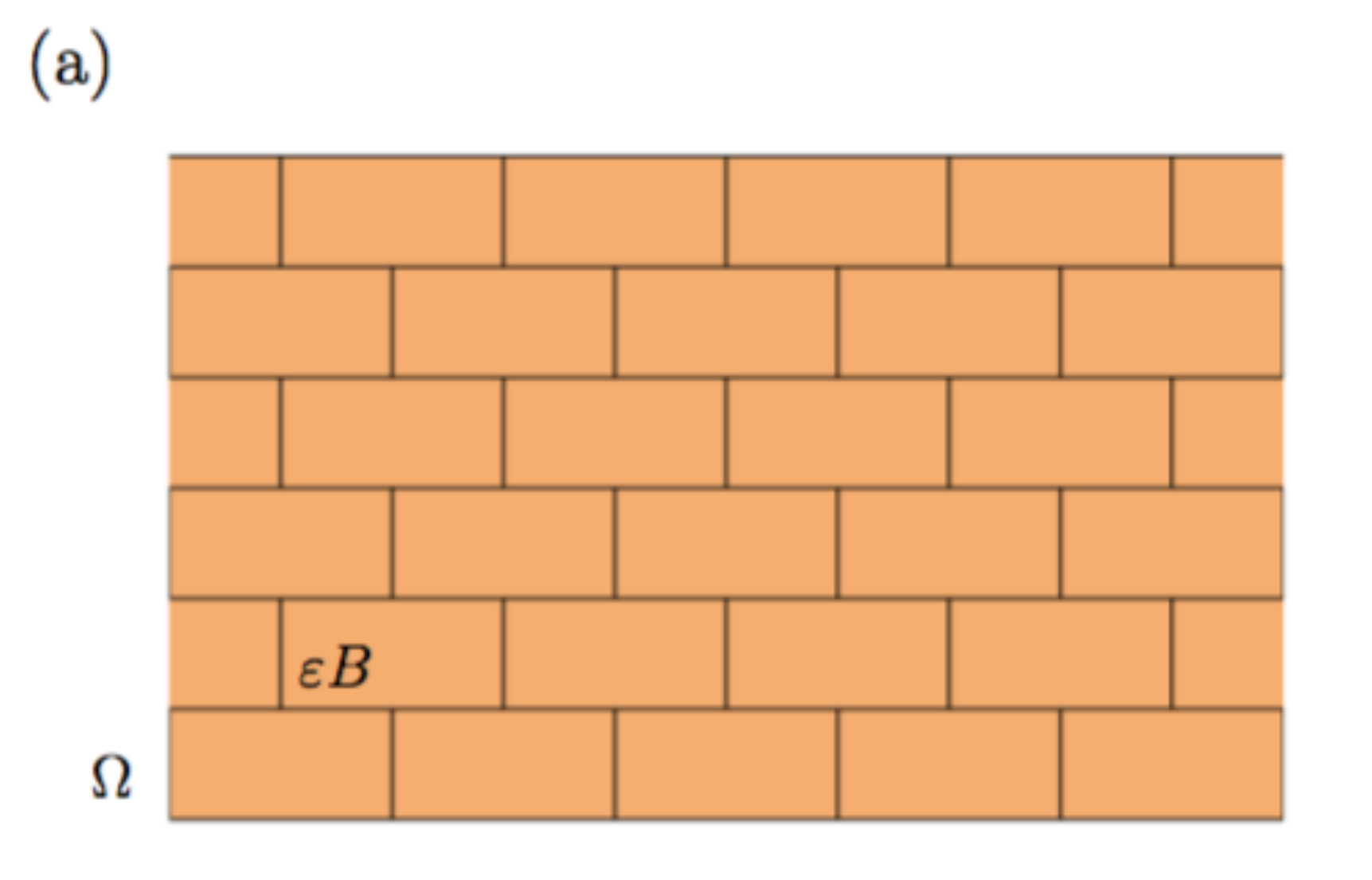}
	\includegraphics[clip = true, scale = 0.3]{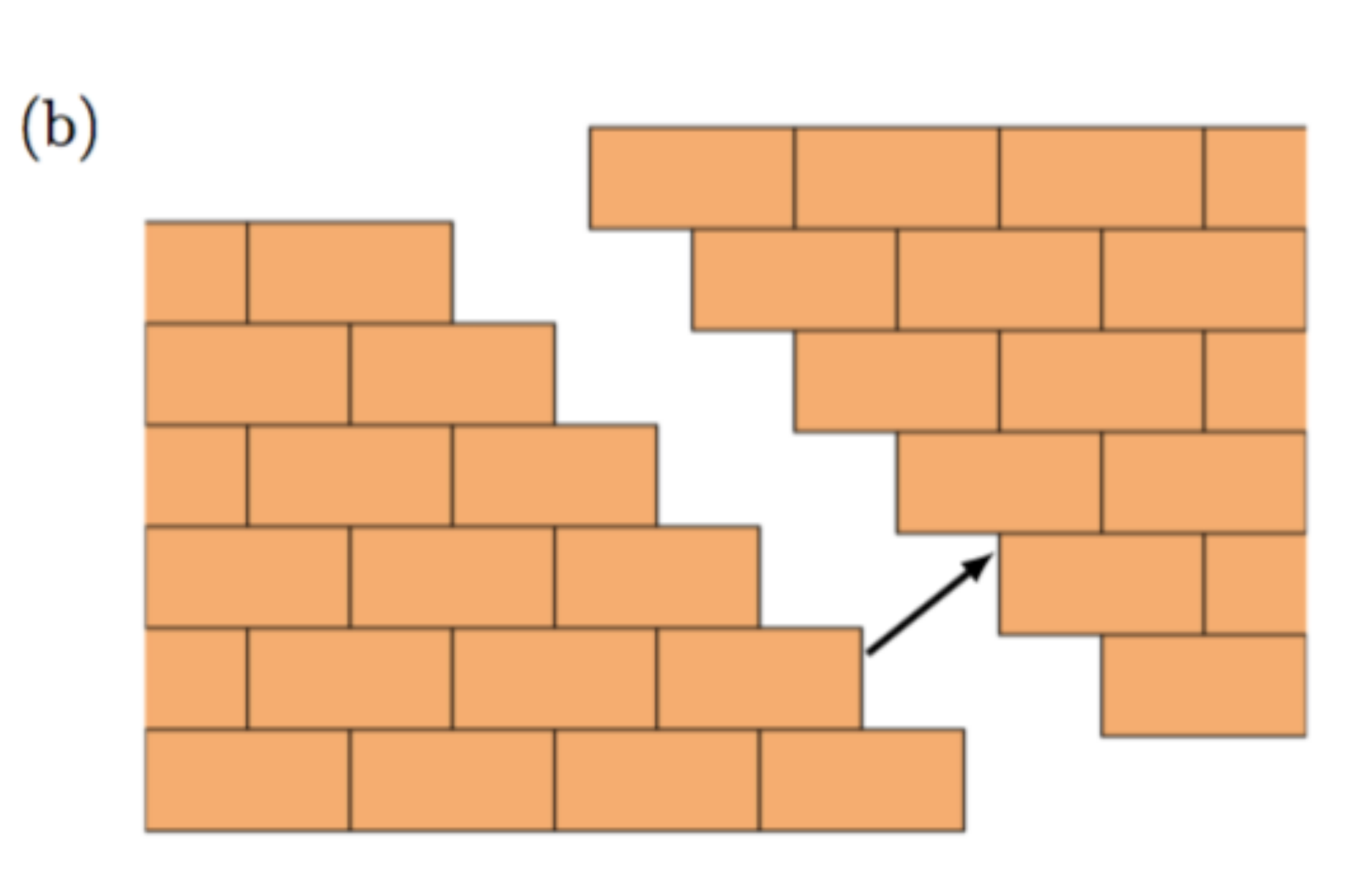}
	\caption{Setting of the problem: (a) reference configuration $\dmn$ is subdivided into a periodic collection of disconnected sets $\dmn \setminus \varepsilon\microgeom$ (blocks) by the periodic microstructure $\varepsilon\microgeom$ (interfaces) and (b) typical admissible displacement field, i.e.~in $\cU\subeps\at\dmn$, yielding fracture.}
\label{fig:geometry}
\end{figure}

Our aim is to study the asymptotic behavior of $\cfun\subeps$ as~$\varepsilon \rightarrow 0\p$ in the sense of~$\Gamma$-convergence. To this end, we define the (candidate) \emph{homogenized energy density} as given by the cell-problem
\begin{multline}
	\cden\hom\at{\mtx} = \inf\Big\{ 
		\frac{1}{2}\intvol{Y}{\norm{\cE u}^2} + \intsurf{J_u \cap Y}{\abs{u\p-u\m}} \bdot \\[0.5ex]
		u \in BD\txtsub{loc}\at{\R{\dmndim}},\, 
		J_{u} \subseteq \microgeom, \,
		\rbr{u\p-u\m} \odot \nu_u \in \cone\txtsub{0}  \,\, \cH^{n-1}\text{-a.e.}, \,
		u-u_{\mtx} \text{ }Y\text{-periodic}\Big\}
\label{eq:f_hom}
\end{multline}
for all $\mtx \in \symspace$. Moreover, we introduce the \emph{homogenized cone $\normalfont\cone\hom$ associated to the microgeometry $\microgeom$ and the cone~$\cone_{0}$} as the set
\begin{equation}
	\cone\hom = \dom\at{\cden\hom^{\infty}},
\label{eq:cone}
\end{equation}
i.e. the domain of the recession function~$\cden\hom^{\infty}$ of the homogenized energy density~$\cden\hom$. We denote by $\cone\hom^{\perp}$ the cone orthogonal to $\cone\hom$; i.e., 
\begin{equation}
	\cone\hom^{\perp} = \cbr{\mtxalt \in \symspace \bdot \, \inprod{\mtx}{\mtxalt} \leq 0 \text{ for all } \mtx \in \cone\hom},
\end{equation}
and by $P_{\cone\hom}$ [resp., $P_{\cone\hom^{\perp}}$] the orthogonal projection on $\cone\hom$ [resp., $\cone\hom^{\perp}$].

The homogenization theorem for functionals in~\eqref{eq:cohesive_functional}--\eqref{eq:test_displ} takes the following form.
\begin{theorem}
Let $\dmn$ be a bounded open set of $\R\dmndim$, let $\microgeom$ be a closed rectifiable $Y$-periodic $\rbr{\dmndim-1}$-dimensional subset of $\R\dmndim$. Moreover, let $\normalfont\cone\txtsub{0}$ be a closed cone in $\normalfont\symspace$ consisting of matrices of the form $a \odot b$ and satisfying condition~\eqref{eq:cone_0_hp}. Then the family $\rbr{F\subeps}$ of functionals defined by~\eqref{eq:cohesive_functional}--\eqref{eq:test_displ} $\Gamma$-converges on $BD\at\dmn \cap L^2\at{\dmn; \R\rngdim}$, with respect to the $L^2\at{\dmn; \R{\rngdim}}$-topology, to the functional $\normalfont \cfun\hom \bdot L^2\at{\dmn; \R{\rngdim}} \rightarrow \sbr{0, +\infty}$ given by
	\begin{equation}\normalfont
		\cfun\hom\at{u} = \begin{dcases}
			\intvol{\dmn} {\cden\hom\at{\cE u}} 
			+ \int_{\dmn} {\cden\hom^{\infty}\at{\diff{\wstn\sing u}{\abs{\wstn\sing u}}} \mathrm{d}\abs{\wstn\sing u}} & \text{ if } u \in \cU\hom\at{\dmn} \\
			+\infty \quad &\text{ otherwise},
		\end{dcases}
	\label{eq:hom_fun}
	\end{equation}
	where $\normalfont\cden\hom$ is the homogenized energy density in~\eqref{eq:f_hom} and the set $\normalfont \cU\hom\at\dmn$ of homogenized admissible displacements is 
	\begin{equation}\normalfont
		\cU\hom\at\dmn = \cbr{u \in BD\at{\dmn} \bdot P_{\cone\hom^{\perp}} \wstn\sing u = 0},
	\label{eq:hom_test_displ}
	\end{equation}
	with $\normalfont \cone\hom$ the homogenized cone associated to $\microgeom$ and $\cone_0$ defined by by~\eqref{eq:cone}.	
\label{thm:cohesive_main}
\end{theorem}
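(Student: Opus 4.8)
The plan is to follow the standard two-inequality structure of a $\Gamma$-convergence proof, combined with an integral-representation argument on $BD$, while treating carefully the degeneracy coming from the cohesive term. First I would establish the $\Gamma$-$\liminf$ inequality. Given $u_\varepsilon \to u$ in $L^2$ with $\sup_\varepsilon \cfun\subeps(u_\varepsilon) < +\infty$, the crucial quantitative step is a \emph{lower bound by a bulk energy}: for any open $\openset \Subset \dmn$ one shows that the scaled energy $\cfun\subeps(u_\varepsilon; \openset)$ is bounded below (up to an error vanishing with $\varepsilon$) by $\int_\openset \cden\hom(\cE v_\varepsilon)\,\mathrm dx + \int_\openset \cden\hom^\infty(\cdot)\,\mathrm d|\wstn\sing v_\varepsilon|$ applied to a suitable regularized/averaged field $v_\varepsilon$; here one uses the definition~\eqref{eq:f_hom} of $\cden\hom$ as an infimum over periodic competitors, a covering of $\openset$ by $\varepsilon$-cubes, and the superadditivity of the cell formula together with the subadditivity of the localized energy with respect to the set variable. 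The uniform bound on $\cfun\subeps$ also gives, via the growth condition $\cden\hom(\mtx)\ge c_1(|\mtx|-1)$ established in Section~\ref{s:properties}, a uniform bound on $|\wstn u_\varepsilon|(\dmn)$, hence $u \in BD\at\dmn$; then one passes to the limit using the lower semicontinuity of the convex functional $\mu \mapsto \int_\openset f(\mu)$ with respect to weak-$*$ convergence of measures, and lets $\openset \uparrow \dmn$. That the limit lies in $\cU\hom\at\dmn$, i.e. $P_{\cone\hom^\perp}\wstn\sing u = 0$, follows because $\cden\hom^\infty = +\infty$ off $\cone\hom$ (by~\eqref{eq:cone}, $\cone\hom = \dom\cden\hom^\infty$), so finiteness of the $\liminf$ forces the singular part to be supported, in direction, on $\cone\hom$; using the cone duality and that $\wstn\sing u$ inherits the constraint $\cone_0 \subseteq \cone\hom$ in the limit, one concludes the projection onto $\cone\hom^\perp$ vanishes.

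Next I would prove the $\Gamma$-$\limsup$ inequality by constructing a recovery sequence. The strategy is the usual density-plus-diagonalization scheme: (i) for $u = u_\mtx$ an affine function with $\mtx \in \cone\hom$ one takes the $\varepsilon$-periodic rescaling $u_\varepsilon(x) = \mtx x + \varepsilon\, w(x/\varepsilon)$ of a near-optimal competitor $w$ in the cell problem~\eqref{eq:f_hom}, which by a change of variables yields $\cfun\subeps(u_\varepsilon; \dmn) \to |\dmn|\,\cden\hom(\mtx)$; (ii) one extends this to piecewise-affine $u$ by working cube-by-cube and matching boundary traces with a controlled interpolation layer near the interfaces of the mesh (the trace-matching on $\partial$ of small cubes must respect the cone constraint $\cone_0$, which is where the structural hypothesis~\eqref{eq:cone_0_hp} is used — it guarantees that affine pieces with different gradients can be joined across a hyperplane while keeping $(u\p-u\m)\odot\nu \in \cone_0$); (iii) for general $u \in \cU\hom\at\dmn$ one approximates $u$ in an energy-convergent way. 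Step (iii) is delicate because of the singular part: one needs a density result saying that functions in $\cU\hom\at\dmn$ whose strain is a.e. piecewise constant (or at least whose singular part is a finite sum of "laminates" with amplitudes in $\cone\hom$) are dense in energy; this is the $BD$ analogue of Reshetnyak-type / $\Gamma$-convergence density arguments, and the constraint $P_{\cone\hom^\perp}\wstn\sing u = 0$ together with the fact that $\cden\hom$ has sublinear growth on $\cone\hom$ (so the recession function controls the singular part) makes it go through. One then diagonalizes.

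The main obstacle, in my estimation, is handling the \emph{interaction between the bulk and surface energies} on the singular set and making the cell formula genuinely subadditive/superadditive as a set function despite the non-standard growth $c_1(|\mtx|-1) \le \cden\hom(\mtx) \le c_2|\mtx|^2$ — the linear-below/quadratic-above mismatch means one cannot invoke the classical integral-representation theorems of \cite{Buttazzo_1989, DalMaso_1993} as a black box, so the integral representation of the $\Gamma$-limit (existence of $\cden\hom$ as the density, and the precise form~\eqref{eq:hom_fun} with the recession function on the singular part) has to be argued essentially by hand. Concretely, the hard technical point is the blow-up/localization argument showing that the $\Gamma$-$\liminf$, as a set function $\openset \mapsto \Gamma\text{-}\liminf \cfun\subeps(u;\openset)$, is a measure and is translation-invariant in the limit, so that its density at a point depends only on $\cE u$ (giving the bulk term) and its behavior on lower-dimensional sets is governed by $\cden\hom^\infty$ (giving the singular term); the cohesive surface contribution at scale $\varepsilon$ must be shown to homogenize into exactly the recession function $\cden\hom^\infty$ and not something larger. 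I would isolate this in a separate integral-representation lemma. The remaining steps — the growth estimates on $\cden\hom$, the identification $\cone\hom = \dom\cden\hom^\infty$ being consistent with the dry-masonry cone, and the demi-coercivity giving existence of minimizers — are comparatively routine given the machinery assembled above.
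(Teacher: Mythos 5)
Your proposal has the right broad strokes (localization, cell formula, identifying the non-standard growth as the central difficulty) but it takes a substantially harder route than the paper, and several of its key steps are left unresolved. The paper's strategy sidesteps the point you identify as hardest — arguing an integral representation on $BD$ ``by hand'' via blow-up — by first restricting attention to $H^1(\dmn;\R\rngdim)$, where the singular part is absent and the functionals do satisfy a standard quadratic growth from above. On $H^1$ the classical Buttazzo--Dal Maso integral-representation theorem applies directly (Proposition~\ref{prop:Buttazzo_DalMaso}), so the $\Gamma$-limit is $\int f(\cE u)\,\mathrm{d}x$ with $f$ convex, translation-invariant, and of order-$2$ growth. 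One then identifies $f=\cden\hom$ via the scaling/periodization arguments of Propositions~\ref{thm:f_hom_leq}--\ref{thm:f_hom_geq}. The extension from $H^1$ to $BD$ is accomplished entirely by convexity: mollification $u_h=\rho_h\ast u$ converges weakly in $BD$; Jensen's inequality and translation-invariance give $\cfun(u_h,\openset_j)\le\cfun(u,\dmn)$; and Reshetnyak's lower-semicontinuity theorem (after infimal-convolution regularization of $\cden\hom$, as in Proposition~\ref{thm:F_hom_lsc}) lets one pass to the limit. The $\Gamma$-limsup (Proposition~\ref{thm:limsup}) is likewise reduced to Corollary~\ref{cor:int_rep} and a Jensen-for-measures estimate from~\cite{Temam_1985}, not to a density-plus-diagonalization recovery construction.

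This reveals a real gap in your plan. Your $\Gamma$-liminf hinges on constructing ``a suitable regularized/averaged field $v_\varepsilon$'' and an $\varepsilon$-cube covering, but this requires matching traces on cube boundaries while preserving the $\cone_0$-constraint, and you do not say how; this is precisely the kind of technical headache the paper's convexity argument avoids. Similarly, your $\Gamma$-limsup step (iii) needs an energy-density result for piecewise-affine (laminate) competitors in the constrained space $\cU\hom\at\dmn$; you correctly flag it as ``delicate'' but do not give the lemma, and it is not a trivial analogue of known $BV$ facts because of the cone constraint on the singular part. Finally, your claim that classical integral-representation theorems cannot be used as a black box is only true on $BD$: they can and are used on $H^1$, and that is the crucial intermediate step you miss. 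None of this makes your overall orientation wrong — the fundamental estimate (Proposition~\ref{prop:fund_est}), the translation invariance (Lemma~\ref{lemma:translation_invariance}), the growth estimates, and the cell-formula identification all appear in both approaches — but the paper's route through Sobolev functions with a convexity lift is what makes the proof tractable, and your version as written would require new, substantial lemmas that you have not supplied.
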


\section{A one-dimensional model problem}\label{s:1d_problem}
\begin{figure}
	\centering
	\includegraphics[clip = true, scale = 0.3]{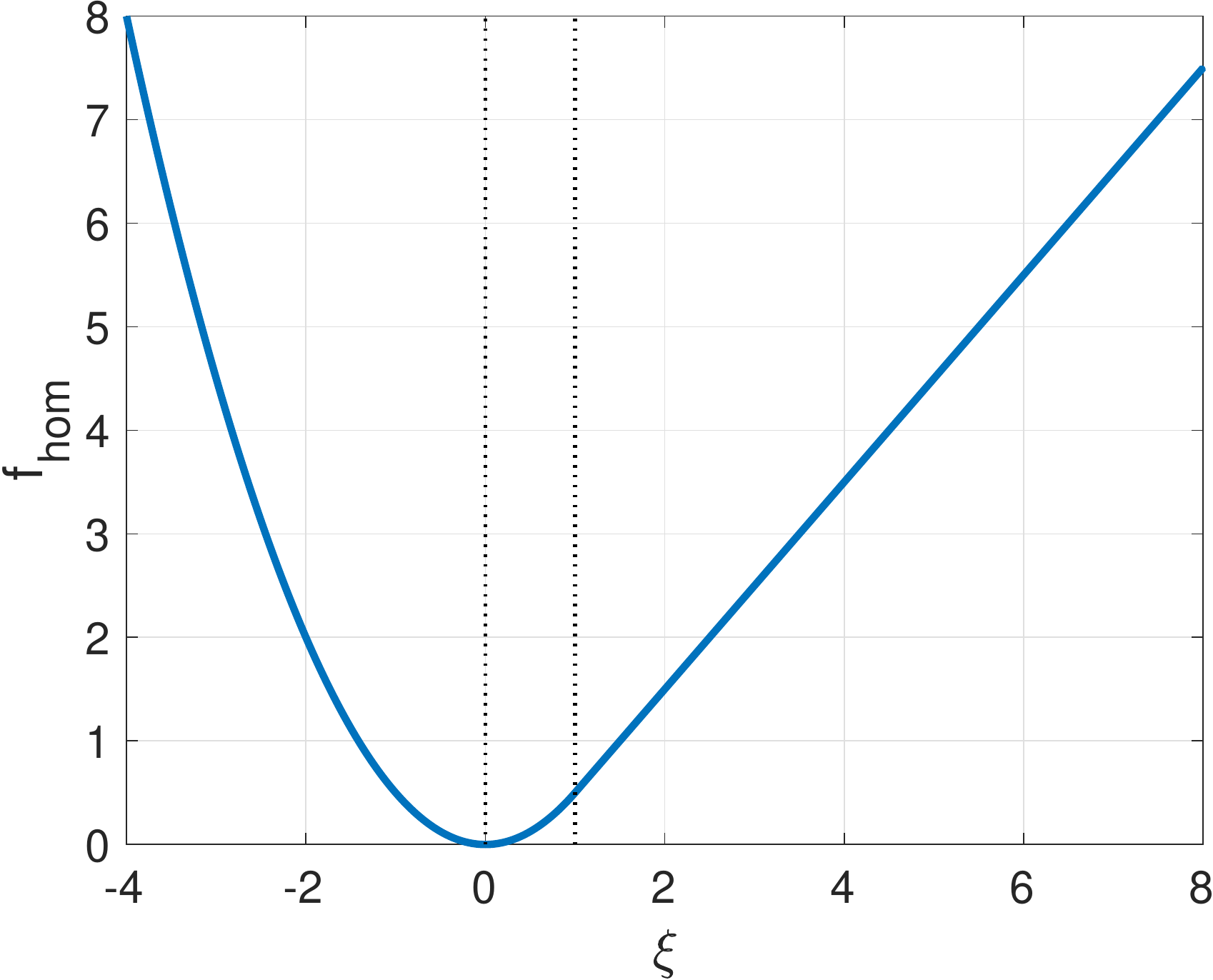}
	\caption{One-dimensional model problem: homogenized energy density.}
\label{fig:energy_density}
\end{figure}
\begin{figure}
	\centering
	\includegraphics[clip = true, scale = 0.6]{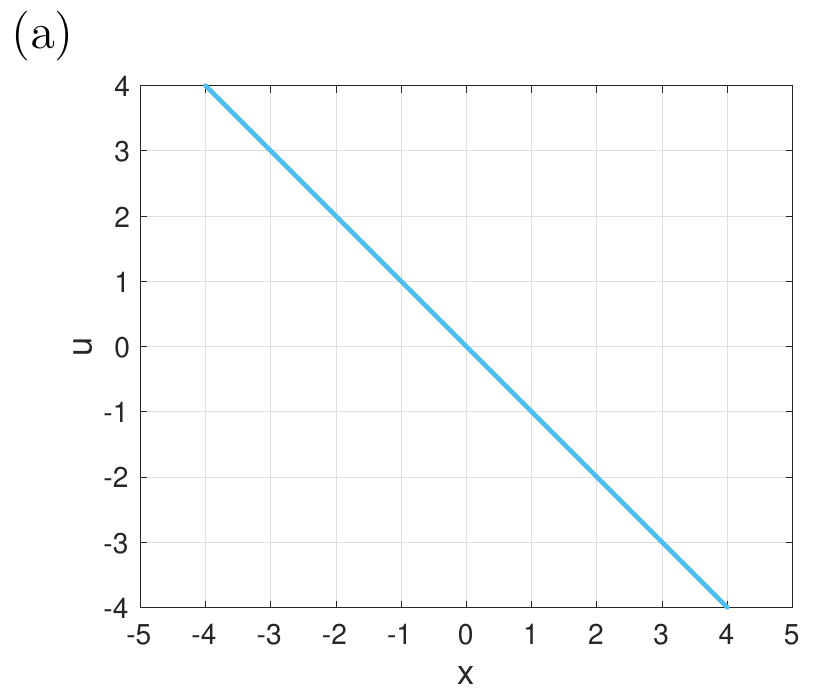}
	\hspace{1cm}
	\includegraphics[clip = true, scale = 0.6]{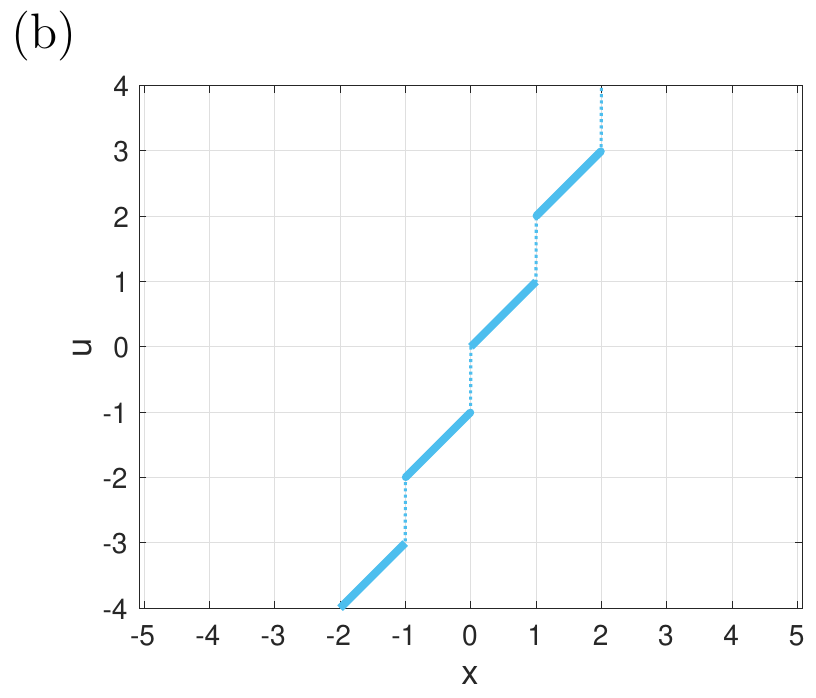}
	\caption{One-dimensional model problem: admissible displacement minimizer in the computation of the homogenized energy density. The strain is (a) $\mtx \leq 1$ and (b) $\mtx > 1$.}
\label{fig:minimizer}
\end{figure}

In this section we consider an illustration of the homogenization result stated in Theorem~\ref{thm:cohesive_main} referring to a one-dimensional setting. 
In such a context, the only choices for the microgeometry set~$\microgeom$ and the cone~$\cone_{0}$ involved in the unilateral constraint on the strain, are $\microgeom= \Z{}$ and $\cone_0 = [0, +\infty)$ respectively. Hence, the cell-problem~\eqref{eq:f_hom}, yielding the homogenized energy density $\cden\hom$, reduces to
\begin{multline}
	\cden\hom\at{\mtx} = \inf\Big\{ 
		\frac{1}{2}\intvol{Y\setminus\Z{}}{\rbr{u'}^2 } + \sum_{J_u \cap Y}\rbr{u\p - u\m} \bdot \\[-0.5ex]
		u \in H^1\at{\R{}\setminus\Z{}}, \,\,
		u\p-u\m \geq 0, \,\,
		u-u_{\mtx} \text{ }Y\text{-periodic}\Big\}
\label{eq:f_hom_1D}
\end{multline}
for all $\mtx \in \R{}$. Problem~\eqref{eq:f_hom_1D} turns out to be simplified when, instead of an admissible displacement function~$u$, its periodic part $\tilde{u} = u - u_{\mtx}$ is considered as unknown. Indeed, by an integration by parts, it follows that
\begin{multline}
	\cden\hom\at{\mtx} = \inf\Big\{ 
		\frac{1}{2}\intvol{Y\setminus \Z{}}{ \rbr{\tilde{u}'}^2} - \rbr{\mtx-1} \sum_{J_{\tilde{u}} \cap Y}\rbr{\tilde{u}\p - \tilde{u}\m} + \frac{1}{2}\,\mtx^2 \, \bdot  \\[-0.5ex]
		\tilde{u} \in H^1\at{\R{}\setminus\Z{}},\,\,
		\tilde{u}\p-\tilde{u}\m \geq 0, \,\,
		\tilde{u}\text{ }Y\text{-periodic}\Big\}.
\label{eq:1d_simpl}
\end{multline}

Assume first that $\mtx \leq 1$. Since the second term of the energy is positive for any admissible function~$\tilde{u}$, the minimum is attained in $H^1\at{\R{}}$. In particular, a minimizer is given by $\tilde u = 0$, and we obtain the solution 
\begin{equation}
	\cden\hom\at{\mtx} = \frac{1}{2}\,\mtx^2, \quad
	 u = u_{\mtx}, \quad
	\text{ for } \mtx \leq 1.
\end{equation}
Next we assume $\mtx > 1$. In such a case, because of their opposite sign, there is a competition between the first and the second energy terms. We claim that the function $\tilde{u}\at{x} = -\rbr{\mtx-1} x$ on $Y$, extended by $Y$-periodicity over $\R{}$, is a minimizer. In fact, for that choice the energy is equal to $\mtx-1/2$ and coincides with the estimate from below in the next Proposition~\ref{prop:growth}. Accordingly, we derive the solution
\begin{equation}
	\cden\hom\at{\mtx} = \mtx-\frac{1}{2}, \quad
	 u\at{x} = x + \mtx\ipbr{x}, \quad
	\text{ for } \mtx > 1,
\end{equation}
in which $\ipbr{x}$ denotes the integer part of $x$. The homogenized energy density $\cden\hom$ is depicted in Figure~\ref{fig:energy_density}, whereas Figure~\ref{fig:minimizer} shows the admissible displacement minimizer in its computation for strains (a) $\mtx \leq 1$ and (b) $\mtx > 1$. 

Upon observing that the recession function~$\cden\hom^{\infty}$ of the homogenized energy density~$\cden\hom$ is
\begin{equation}
	\cden\hom^{\infty}\at\mtx = \begin{dcases}
		+\infty & \text{ if } \mtx < 0 \\
		\mtx \quad &\text{ if } \mtx \geq 0,
	\end{dcases}
\end{equation}
from \eqref{eq:cone} the homogenized cone $\cone\hom$ associated to the microgeometry $\microgeom$ and the cone~$\cone_{0}$ results to be
\begin{equation}
	\cone\hom=[0, +\infty),
\end{equation}
in particular coinciding with $\cone_{0}$ itself. Hence, for a bounded open set~$\dmn$ of $\R{}$, the homogenized functional~$\cfun\hom$ given in \eqref{eq:hom_fun} is expressed as
\begin{equation}
	\cfun\hom\at{u} = \begin{dcases}
		\intvol{\dmn} {\cden\hom\at{u'}} 
		+ \wstn\sing u\at\dmn & \text{ if } u \in \cU\hom\at{\dmn} \\
		+\infty \quad &\text{ otherwise},
	\end{dcases}
\end{equation}
with the space $\normalfont \cU\hom\at\dmn$ of homogenized admissible displacements given in \eqref{eq:hom_test_displ} 
\begin{equation}
	\cU\hom\at\dmn = \cbr{u \in BV\at{\dmn; \R{}} \bdot \wstn\sing u \geq 0}.
\end{equation}

It is worth noticing some properties enjoyed by the homogenized energy density $\cden\hom$ in the present one-dimensional model problem:
\begin{enumerate}[label=(\roman*)]
	\item it exhibits a mixed growth. In fact, it is quadratic under compression $\mtx < 0$, where no cracking occurs, and linear under tension $\mtx > 1$, where cracking occurs accompanied by expense of cohesive fracture energy. Interestingly, in the tensile region $0 < \mtx < 1$ no fracture develops: in mechanical terms, that corresponds to the capability of the material to sustain moderate tensile stresses; in mathematical terms, the expense of fracture energy is not convenient compared to that of elastic energy under moderate tensile strain;
	\item it attains the upper bound value $\mtx^2/2$ over $\cone_0^\perp$; 
	\item it is of linear growth over $\cone\hom$. 
\end{enumerate}
Such properties have general validity, as will be shown in the next section.

\section{Some properties of the homogenized energy density}\label{s:properties}
This section is devoted to some properties of the homogenized energy density $\cden\hom$ defined in \eqref{eq:f_hom}. We first show that it satisfies a non-standard growth condition. That fact descends from the structure of the cell problem~\eqref{eq:f_hom}, as it consists in the sum of a quadratic elastic energy and a linear interface energy.
\begin{proposition}\label{prop:growth}
There exist constants $c_1, \,c_2 > 0$ such that the homogenized energy density $\normalfont \cden\hom$ satisfies
\begin{equation}\normalfont
	c_1 \rbr{\abs{\mtx} - 1} \leq \cden\hom\at\mtx \leq c_2 \abs{\mtx}^2,
\end{equation}
for all $\normalfont \mtx \in \symspace$.
\end{proposition}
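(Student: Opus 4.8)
The plan is to prove the two inequalities separately, both directly from the cell problem~\eqref{eq:f_hom}.

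For the upper bound, the natural thing is to choose a convenient competitor in the infimum. The function $u = u_{\mtx}$ itself is admissible only if $\mtx x$ has no jumps on $\microgeom$ — but it does have a natural jump when we require $u - u_{\mtx}$ to be $Y$-periodic. Actually, the cleanest competitor is $u(x) = \mtx x$ on each translated copy of the period cell, i.e. the $Y$-periodic extension of $u_{\mtx}$ restricted to $Y$; for this choice $\cE u = \mtx$ a.e. and the jumps across $\microgeom$ are bounded (the jump of $u_\mtx$ across a face is $\mtx$ times a unit vector, so $|u^+ - u^-| \le |\mtx|$ there). Then the bulk term contributes $\tfrac12\|\mtx\|^2 |Y| = \tfrac12\|\mtx\|^2$ and the surface term contributes at most $|\mtx|\,\cH^{n-1}(\microgeom \cap Y)$. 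Using~\eqref{eq:doppia_stima} to replace $\|\mtx\|$ by $M|\mtx|$ and absorbing the linear term into the quadratic one for large $|\mtx|$ while bounding it crudely for small $|\mtx|$, one gets $\cden\hom(\mtx) \le c_2 |\mtx|^2$ for a suitable $c_2$. One subtlety: the admissibility constraint also requires $(u^+ - u^-)\odot\nu_u \in \cone_0$. I would need to check that the competitor actually satisfies this; if not, one must be a bit more careful and possibly rescale, or note that for the upper bound one only needs \emph{some} admissible $u$ for each $\mtx$, which may require a short argument using the structure of $\cone_0$ (e.g. that $\cone_0$ contains $0$, so a competitor with no jumps — fully rigid on all of $\R^\dmndim$, namely $u = u_\mtx$ with $u - u_\mtx \equiv 0$, which is trivially $Y$-periodic and has empty jump set) works and gives directly $\cden\hom(\mtx) \le \tfrac12\|\mtx\|^2 \le \tfrac{M^2}{2}|\mtx|^2$. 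That disposes of the upper bound immediately with $c_2 = M^2/2$.

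For the lower bound, the idea is to estimate any admissible competitor $u$ from below. Fix an admissible $u$ for the cell problem at $\mtx$. Using the coercivity~\eqref{eq:doppia_stima}, $\tfrac12\int_Y \|\cE u\|^2 \ge \tfrac{\alpha}{2}\int_Y |\cE u|^2$, and the surface term is $\int_{J_u\cap Y}|u^+ - u^-|\,d\cH^{n-1}$, which controls $|\wstn\jumpp u|(Y)$ up to the constant $\sqrt2$ (since $|(u^+-u^-)\odot\nu_u| = |u^+-u^-|/\sqrt2$ when $\nu_u$ is a unit vector perpendicular to... actually $|a\odot b|^2 = (|a|^2|b|^2 + \langle a,b\rangle^2)/2$, so with $b = \nu_u$ a unit vector, $|(u^+-u^-)\odot\nu_u| \le |u^+-u^-|$). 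Since $u - u_\mtx$ is $Y$-periodic and $\wstn u_\mtx = \mtx\cL^\dmndim$, the total strain measure of $u$ over $Y$ satisfies $\wstn u(Y) = \mtx$ (the periodic part integrates to zero, or more precisely its distributional strain has zero average over the period). Hence $\mtx = \int_Y \cE u\,dx + \wstn\sing u(Y)$, so $|\mtx| \le \int_Y |\cE u|\,dx + |\wstn\sing u|(Y) \le \int_Y|\cE u|\,dx + |\wstn\jumpp u|(Y)$ — using that admissible $u$ have no Cantor part is not needed since we're in $BD_{\text{loc}}$, but actually $\wstn\sing u$ could have a Cantor part; however the jump constraint forces... hmm, in the cell problem $u \in BD_{\text{loc}}$ with $J_u \subseteq \microgeom$, and on $\R^\dmndim\setminus\microgeom$ the function need only be $BD$, so there could be a Cantor part off $\microgeom$. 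I'd handle this by noting $|\wstn\sing u|(Y) \ge |P_{\cone_0}\wstn\sing u|(Y) \ge \dots$ — no, simplest: since $\cden\hom^\infty$ has effective domain by definition, and for the lower bound I only need $\cden\hom(\mtx) \ge c_1(|\mtx| - 1)$, I can write $\int_Y|\cE u|\,dx \le \tfrac12\int_Y|\cE u|^2\,dx + \tfrac12|Y|$ (Young) $\le \tfrac{1}{2\alpha}\int_Y\|\cE u\|^2 + \tfrac12$. Combining, $|\mtx| \le \tfrac{1}{\alpha}\cdot\big(\tfrac12\int_Y\|\cE u\|^2\big) + \tfrac12 + |\wstn\sing u|(Y)$. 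The surface energy is $\ge |\wstn\jumpp u|(Y)/\sqrt{2} \cdot$... wait I want $|\wstn\sing u|(Y)$ bounded by the surface energy, which fails if there's a Cantor part.

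The resolution I expect: one uses the constraint $(u^+-u^-)\odot\nu_u\in\cone_0$ together with~\eqref{eq:cone_0_hp} only for the \emph{compatibility} of the limit, but for Proposition~\ref{prop:growth}'s lower bound the honest route is: $|\mtx| = |\wstn u(Y)| \le |\cE u\,\cL^\dmndim|(Y) + |\wstn\sing u|(Y)$, and then bound $|\wstn\sing u|(Y)$ — if $u$ is a \emph{finite-energy} competitor then its Cantor part need not vanish, but any competitor with a Cantor part can be modified to reduce energy, OR one simply observes the infimum in~\eqref{eq:f_hom} does not increase if we further restrict to $SBD$. I would include a one-line remark that it suffices to bound the energy of $SBD$ competitors (by density/truncation, or by noting that the relaxation is the same), so that $|\wstn\sing u|(Y) = |\wstn\jumpp u|(Y) \le \sqrt2\int_{J_u\cap Y}|u^+-u^-|\,d\cH^{n-1}$. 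Then $|\mtx| \le \tfrac{1}{\alpha}E_{\text{bulk}} + \tfrac12 + \sqrt2 E_{\text{surf}} \le C(E_{\text{bulk}} + E_{\text{surf}}) + \tfrac12 = C\,\big[\text{cell energy of }u\big] + \tfrac12$, whence taking the infimum over $u$ gives $|\mtx| \le C\,\cden\hom(\mtx) + \tfrac12$, i.e. $\cden\hom(\mtx) \ge \tfrac1C(|\mtx| - \tfrac12) \ge c_1(|\mtx| - 1)$ with $c_1 = \tfrac{1}{2C}$. I expect the \textbf{main obstacle} to be precisely this bookkeeping about the Cantor part of competitors and getting the constants to land cleanly; the mechanism (coercivity of $A$ gives the quadratic upper bound, the competing linear-vs-quadratic scaling in the cell energy forces the linear lower bound) is straightforward, but stating it so that $c_1$ is genuinely independent of $\mtx$ — and confirming the averaging identity $\wstn u(Y) = \mtx$ for $Y$-periodic perturbations of $u_\mtx$, which needs a small argument since $u$ is only in $BD_{\text{loc}}$ and traces on $\partial Y$ must match up across opposite faces — is where the care is needed.
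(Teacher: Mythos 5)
Your proof follows essentially the same route as the paper's: the upper bound uses $u_{\mtx}$ itself as the competitor exactly as in the paper (giving $c_2 = M^2/2$), and the lower bound uses the same mechanism, namely Young's inequality to replace the quadratic bulk term by a linear term minus $\tfrac12$, coercivity of $A$ via~\eqref{eq:doppia_stima}, and the mean-strain identity passed through the triangle inequality. The Cantor-part concern you raise is genuine and, read literally, the paper's own display faces it as well: in the enlarged infimum~\eqref{eq:growth_0} over all of $BD\txtsub{loc}(\R{\dmndim})$, the final equality $\cdots = c\abs{\mtx}-\tfrac12$ requires $\wstn\cantor u (Y) = 0$. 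However, the two repairs you sketch do not quite close it: one cannot in general ``modify a Cantor-carrying competitor to reduce its energy'' (adding a $Y$-periodic Cantor-type displacement to $u_{\mtx}$ can cancel $\cE u$ without creating jumps, so such a competitor already has \emph{zero} cell energy), and a density-in-$SBD$ observation points the wrong way (restricting the admissible class only raises the infimum, whereas a lower bound on the \emph{unrestricted} infimum is what is required). The intended reading --- consistent with $\cU_{\varepsilon}$ living in $SBD$ and with Proposition~\ref{thm:f_hom_leq}, where cell-problem competitors are rescaled into elements of $\cU_{\varepsilon_h}$ --- is that the test functions in~\eqref{eq:f_hom} are tacitly taken in $SBD\txtsub{loc}(\R{\dmndim})$, in which case no Cantor part arises and both the paper's computation and yours go through unchanged.
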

\begin{proof}
Let $\mtx$ in $\symspace$ be fixed. The estimate from above is an immediate consequence of taking $u_{\mtx}$ as a test function in \eqref{eq:f_hom}:
\begin{equation}
	\cden\hom\at\mtx \leq \frac{1}{2} \norm{\mtx}^2 \leq \frac{M^2}{2}\abs{\mtx}^2.
\end{equation}
As for the estimate from below, we first enlarge the space of test functions by removing the kinematical constraints in  \eqref{eq:f_hom}:
\begin{equation}
	\cden\hom\at{\mtx} 
		\geq \inf\Big\{ \frac{1}{2}\intvol{Y}{\norm{\cE u}^2} + \intsurf{J_u \cap Y}{\abs{u\p-u\m}} \bdot 
		u \in BD\txtsub{loc}\at{\R{\dmndim}},\, 
		u-u_{\mtx} \text{ }Y\text{-periodic}\Big\}.
\label{eq:growth_0}
\end{equation}
Then, it suffices to observe that for a function $u$ in $BD\txtsub{loc}\at{\R{\dmndim}}$ such that $u-u_{\mtx}$ is $Y$-periodic, we have
\begin{align}
	\begin{aligned}
		\frac{1}{2}\intvol{Y}{\norm{\cE u}^2} + \intsurf{J_u \cap Y}{\abs{u\p-u\m}} &\\[1ex]
			&\hspace{-4cm}\geq c \rbr{\intvol{Y}{\abs{\cE u}} + \intsurf{J_u \cap Y}{\abs{u\p-u\m}}} - \frac{1}{2} \\[1ex]
			&\hspace{-4cm}\geq c \abs{\intvol{Y}{\cE u} + \intsurf{J_{u}\cap Y}{\rbr{u\p - u\m} \odot \nu_{u}}} - \frac{1}{2}
			= c \abs{\mtx} - \frac{1}{2},
	\end{aligned}
\label{eq:growth_1}
\end{align}	
with $c = \min\cbr{\sqrt{\alpha}, \, 1}$, where $\alpha$ is given by \eqref{eq:doppia_stima}. \qed
\end{proof}

Note that both estimates in Proposition~\ref{prop:growth} cannot be improved. Concerning the one from above, the following result holds.
\begin{proposition}
The homogenized energy density $\normalfont \cden\hom$  is such that
\begin{equation}\normalfont
	\cden\hom\at\mtx=\frac{1}{2}\norm{\mtx}^2, \quad
\end{equation}
for all $\mtx \in \cone_0^{\perp}$, where $\normalfont\cone_0^{\perp} = \cbr{\mtxalt \in \symspace \bdot \, \inprod{\mtx}{\mtxalt} \leq 0 \text{ for all } \mtx \in \cone_0}$ is the cone orthogonal to~$\cone_0$.
\end{proposition}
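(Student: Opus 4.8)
The plan is to prove the lower bound $\cden\hom\at\mtx \geq \frac12 \norm{\mtx}^2$ for $\mtx \in \cone_0^\perp$, since the opposite inequality is already known from Proposition~\ref{prop:growth} (taking $u_\mtx$ as test function). The key idea is a Jensen-type argument: for any admissible test function $u$ in the cell problem~\eqref{eq:f_hom} we must show that the full energy is bounded below by $\frac12\norm{\mtx}^2$, using that $\mtx \in \cone_0^\perp$ to \emph{discard the surface term in a favorable direction} rather than simply dropping it.

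First I would fix $\mtx \in \cone_0^\perp$ and take an admissible $u \in BD\txtsub{loc}\at{\R\dmndim}$ with $J_u \subseteq \microgeom$, $\rbr{u\p-u\m}\odot\nu_u \in \cone_0$ $\cH^{\dmndim-1}$-a.e., and $u - u_\mtx$ being $Y$-periodic. By periodicity of $u - u_\mtx$, the total strain measure satisfies $\wstn u \at Y = \mtx$, i.e. $\intvol{Y}{\cE u} + \intsurf{J_u\cap Y}{\rbr{u\p-u\m}\odot\nu_u} = \mtx$ (the Cantor part contributes but can be absorbed; more precisely $\wstn u \mres Y$ has total mass $\mtx$ because the periodic part integrates to zero). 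Now I pair with $A\mtx$: since $\inprod{A\mtx}{\rbr{u\p-u\m}\odot\nu_u} = \inprodalt{\mtx}{\rbr{u\p-u\m}\odot\nu_u} \leq 0$ by the definition of $\cone_0^\perp$ (note the inner product $\inprodalt{\cdot}{\cdot}$ is $\inprod{A\cdot}{\cdot}$, and $\cone_0^\perp$ is defined with the \emph{Euclidean} inner product — so here I should be careful and either use the Euclidean pairing throughout or note $A$ is fixed; pairing with $\mtx$ via the Euclidean product gives $\inprod{\mtx}{\rbr{u\p-u\m}\odot\nu_u}\leq 0$), we get
\begin{equation}
	\inprod{\mtx}{\mtx} = \inprod{\mtx}{\intvol{Y}{\cE u}} + \inprod{\mtx}{\intsurf{J_u\cap Y}{\rbr{u\p-u\m}\odot\nu_u}} \leq \inprod{\mtx}{\intvol{Y}{\cE u}}.
\end{equation}
To handle the Cantor part cleanly, I would write $\wstn u \mres Y = \cE u\,\cL^\dmndim + \wstn\jumpp u + \wstn\cantor u$; for the Cantor part I would need an additional argument that its contribution paired with $\mtx$ is also $\leq 0$, which may require approximating $\cone_0^\perp$ directions or invoking that $\wstn\cantor u$ takes values in the closed cone generated by $\cone_0$ — this is the point to watch.

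Next, applying the Cauchy--Schwarz inequality in the $\norm\cdot$ inner product, $\inprod{\mtx}{\intvol{Y}{\cE u}} = \inprodalt{A^{-1}(A\mtx)}{\cdots}$ — more simply, I would use the elementary inequality $\inprod{\mtx}{v} \leq \frac12\norm{\mtx}^2 + \frac12\norm{v}^2$ applied pointwise with $v = \cE u\at x$ and integrated, combined with Jensen's inequality $\norm{\intvol{Y}{\cE u}}^2 \leq \intvol{Y}{\norm{\cE u}^2}$ (valid since $\abs Y = 1$ and $\norm\cdot^2$ is convex). Concretely: $\frac12\norm\mtx^2 \leq \inprod{\mtx}{\intvol Y {\cE u}} - \frac12\norm\mtx^2 + \frac12\norm\mtx^2$, and rearranging $\norm\mtx^2 \leq 2\inprod{\mtx}{\intvol Y{\cE u}} \leq \norm\mtx^2 + \intvol Y{\norm{\cE u}^2} - \norm{\intvol Y{\cE u} - \mtx}^2 \leq \norm\mtx^2 + \intvol Y{\norm{\cE u}^2}$ — this is circular, so instead I would directly chain $\frac12\intvol Y{\norm{\cE u}^2} \geq \frac12\norm{\intvol Y{\cE u}}^2 \geq \frac12\norm{P}^2$ where $P$ is such that $\inprod{\mtx}{P}\geq\inprod\mtx\mtx$; minimizing $\frac12\norm P^2$ subject to $\inprod\mtx P \geq \norm\mtx^2/M^2\cdot(\dots)$ — cleaner yet: since $\inprod\mtx{\intvol Y{\cE u}}\geq\norm\mtx^2_{A}:=\inprod{A\mtx}\mtx$ would only follow if I pair with $A\mtx$, I will consistently pair with $A\mtx$ from the start, giving $\inprodalt\mtx\mtx \leq \inprodalt\mtx{\intvol Y{\cE u}}$, then Cauchy--Schwarz and Jensen in $\norm\cdot$: $\norm\mtx^2 \leq \norm\mtx\,\norm{\intvol Y{\cE u}} \leq \norm\mtx\sqrt{\intvol Y{\norm{\cE u}^2}}$, hence $\intvol Y{\norm{\cE u}^2}\geq\norm\mtx^2$ and $\cden\hom\at\mtx\geq\frac12\norm\mtx^2$, dropping the nonnegative surface term entirely.

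The main obstacle I anticipate is the rigorous treatment of the singular strain directions: I must ensure that \emph{every} piece of the singular part $\wstn\sing u\mres Y$ — not just the jump part whose density lies in $\cone_0$ by admissibility, but also the Cantor part $\wstn\cantor u$ — pairs non-positively with $A\mtx$ when $\mtx\in\cone_0^\perp$. For the jump part this is immediate from $\rbr{u\p-u\m}\odot\nu_u\in\cone_0$ and the definition of $\cone_0^\perp$ (keeping the inner products consistent). For the Cantor part one expects $\diff{\wstn\cantor u}{\abs{\wstn\cantor u}}$ to take values in the closure of the convex cone generated by $\cone_0$ $\abs{\wstn\cantor u}$-a.e.; this should follow from a blow-up/localization argument or can be circumvented by first proving the statement for $u\in SBD\txtsub{loc}$ (where $\wstn\cantor u = 0$) and then using a density/relaxation argument, noting that the infimum in~\eqref{eq:f_hom} is unchanged — or indeed $\geq$ — when restricted to $SBD$. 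Modulo this measure-theoretic point, the proof is the short Jensen-plus-duality computation above.
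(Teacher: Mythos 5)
Your argument is essentially the same as the paper's: both exploit the $Y$-periodicity of $u-u_\mtx$ to relate the average of $\cE u$ to the jump contribution on $\microgeom\cap Y$, discard that contribution via the duality between $\cone_0$ and $\cone_0^\perp$, and then lower-bound the bulk energy by $\tfrac12\norm{\mtx}^2$ using convexity (you via Cauchy--Schwarz/Jensen on $\int_Y\cE u$, the paper via the explicit expansion $\norm{\cE\tilde u+\mtx}^2=\norm{\cE\tilde u}^2+\norm{\mtx}^2+2\inprodalt{\cE\tilde u}{\mtx}$), dropping the nonnegative Barenblatt surface term entirely. Your caution about a possible Cantor part of $\wstn u$ is well-placed (the paper silently implements the integration by parts as if $u\in SBD\txtsub{loc}$, which is the intended reading of the cell problem), as is your flag of the Euclidean vs.\ $A$-weighted pairing in the definition of $\cone_0^\perp$; neither affects the alignment of the two approaches.
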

\begin{proof}
Let $\mtx \in \cone_0^\perp$ be fixed. For $u \in BD\txtsub{loc}\at{\R{\rngdim}}$ a test function in~\eqref{eq:f_hom}, we denote by $\tilde u = u - u_{\mtx}$ its $Y$-periodic part. We derive
\begin{align}
	\begin{aligned}
		\frac{1}{2}\intvol{Y}{\norm{\cE u}^2}  
			&= \frac{1}{2}\intvol{Y}{\norm{\cE \tilde u}^2} + \frac{1}{2} \norm{\mtx}^2 + \intvol{Y}{\inprodalt{\cE \tilde u}{\mtx}} \\[1ex]
			&\geq \frac{1}{2} \norm{\mtx}^2  
				- \intsurf{J_{u} \cap Y}{\inprodalt{\rbr{\tilde u\p - \tilde u\m}\odot \nu_{u}}{\mtx}}
			\geq \frac{1}{2} \norm{\mtx}^2,
	\end{aligned}
\end{align}
where the periodicity of $\tilde u$ has been used in the integration by parts and the fact that $\rbr{\tilde u\p - \tilde u\m}\odot \nu_{u} \in \cone_0 \, \cH^{\dmndim-1}\text{-a.e.}$, $\mtx \in \cone_0^\perp$.
\qed
\end{proof}
On the other hand, by definition~\eqref{eq:cone}, the energy density~$\cden\hom$ is sublinear over the homogenized cone~$\cone\hom$. We now supply an alternative characterization of such a cone. Heuristically, suppose that the minimum in~\eqref{eq:f_hom} is attained. Then, one might expect that over $\cone\hom$ the absolutely continuous part of the strain associated to the minimizer is almost everywhere vanishing. Hence, if there were no surface energy, over the cone~$\cone\hom$ we would have a vanishing energy density. Otherwise stated, $\cone\hom$ might be characterized as the kernel of the energy density obtained by dropping off the surface energy contribution instead as the cone where the homogenized energy density $\cden\hom$ is sublinear,. 

The argument above suggests to introduce an energy density~$\mden\hom$ by the following cell-problem formula
\begin{multline}
	\mden\hom\at{\mtx} = \inf\Big\{ 
		\frac{1}{2}\intvol{Y}{\norm{\cE u}^2} \bdot \\[0.5ex]
		u \in BD\txtsub{loc}\at{\R{\dmndim}},\, 
		J_{u} \subseteq \microgeom, \,
		\rbr{u\p-u\m} \odot \nu_u \in \cone\txtsub{0}  \,\, \cH^{n-1}\text{-a.e.}, \,
		u-u_{\mtx} \text{ }Y\text{-periodic}\Big\},
\label{eq:g_hom}
\end{multline}
and to define the associated kernel
\begin{equation}
	\mcone\hom = \cbr{\mtx \in \symspace \bdot \mden\hom\at{\mtx} = 0}.
\label{eq:masonry_cone}
\end{equation}
Note that the function~$\mden\hom$ and the cone~$\mcone\hom$ have been subject of investigation in~\cite{Braides_ChiadoPiat_2000}, in the context of a homogenization result for the purely degenerate case of dry-masonry structures. 

The major step for proving that in fact $\mcone\hom$ coincides with $\cone\hom$ is addressed in the following proposition, where it is shown that the homogenized energy density $\cden\hom$ is sublinear over~$\mcone\hom$. To this end, we introduce the technical assumption that the orthogonal cone $\cone^{\perp}$ of the convex hull $\cone$ of $\normalfont\cone\txtsub{0}$ has non-empty interior.

\begin{proposition}\label{thm:linear_growth} 
Let $\cone$ denote the convex hull of $\normalfont\cone\txtsub{0}$ and assume that the orthogonal cone $\cone^{\perp}$ has non-empty interior. Then there exists a constant $c > 0$ such that the homogenized energy density $\normalfont \cden\hom$ satisfies
\begin{equation}\normalfont
	\cden\txtsub{hom}\at{\mtx} \leq c \abs{\mtx}
\end{equation}
for all $\normalfont\mtx \in \mcone\hom$.
\end{proposition}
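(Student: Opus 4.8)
The plan is to run a convex‑duality argument: the hypothesis on $\cone^{\perp}$ produces a ``uniformly compressive stress'' $\sigma$, and testing the periodicity constraint against $\sigma$ converts a vanishing bulk energy into a bound on the surface energy. More precisely, since $\cone^{\perp}$ (a subset of $\symspace$) has non‑empty interior and $\cone\txtsub{0}\subseteq\cone$, I would fix $\sigma$ in the interior of $\cone^{\perp}$ together with $\mu>0$ such that the $\mu$‑ball around $\sigma$ in $\symspace$ is contained in $\cone^{\perp}$. Then for every $\mtxalt\in\cone\txtsub{0}\setminus\cbr{0}$ one has $\sigma+\mu\,\mtxalt/\abs{\mtxalt}\in\cone^{\perp}$, hence $\inprod{\sigma}{\mtxalt}+\mu\abs{\mtxalt}\le 0$, i.e.
\[
	\inprod{\sigma}{\mtxalt}\le -\mu\abs{\mtxalt}\qquad\text{for all }\mtxalt\in\cone\txtsub{0}.
\]

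Now fix $\mtx\in\mcone\hom$, so that $\mden\hom\at{\mtx}=0$, and choose a minimizing sequence $\rbr{u_{k}}$ for~\eqref{eq:g_hom}: each $u_{k}\in SBD\txtsub{loc}\at{\R\dmndim}$ satisfies $J_{u_{k}}\subseteq\microgeom$, $\rbr{u_{k}\p-u_{k}\m}\odot\nu_{u_{k}}\in\cone\txtsub{0}$ $\cH^{\dmndim-1}$‑a.e., $u_{k}-u_{\mtx}$ is $Y$‑periodic, and $\tfrac12\intvol{Y}{\norm{\cE u_{k}}^{2}}\to 0$ (as in the proof of Proposition~\ref{prop:growth}, competitors may be taken with no Cantor part). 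Since these $u_{k}$ are also admissible in~\eqref{eq:f_hom}, for every $k$
\[
	\cden\hom\at{\mtx}\le \tfrac12\intvol{Y}{\norm{\cE u_{k}}^{2}}+\intsurf{J_{u_{k}}\cap Y}{\abs{u_{k}\p-u_{k}\m}}.
\]
Taking the (Euclidean) scalar product of the periodicity identity $\intvol{Y}{\cE u_{k}}+\intsurf{J_{u_{k}}\cap Y}{\rbr{u_{k}\p-u_{k}\m}\odot\nu_{u_{k}}}=\mtx$ with $\sigma$ gives
\[
	\intsurf{J_{u_{k}}\cap Y}{\inprod{\sigma}{\rbr{u_{k}\p-u_{k}\m}\odot\nu_{u_{k}}}}=\inprod{\sigma}{\mtx}-\intvol{Y}{\inprod{\sigma}{\cE u_{k}}}.
\]
On $J_{u_{k}}$ the integrand on the left lies in $\cone\txtsub{0}$, so by the first display and the elementary bound $\abs{a\odot b}\ge\abs{a}\abs{b}/\sqrt2$ (applied with $b=\nu_{u_{k}}$) the left‑hand side is $\le-\tfrac{\mu}{\sqrt2}\intsurf{J_{u_{k}}\cap Y}{\abs{u_{k}\p-u_{k}\m}}$; rearranging and using Cauchy--Schwarz,
\[
	\intsurf{J_{u_{k}}\cap Y}{\abs{u_{k}\p-u_{k}\m}}\le\frac{\sqrt2\,\abs{\sigma}}{\mu}\rbr{\abs{\mtx}+\intvol{Y}{\abs{\cE u_{k}}}}.
\]

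Inserting this into the previous estimate for $\cden\hom\at{\mtx}$ and noting, by~\eqref{eq:doppia_stima} and Hölder (recall $\abs{Y}=1$), that $\intvol{Y}{\abs{\cE u_{k}}}\le\alpha^{-1/2}\rbr{\intvol{Y}{\norm{\cE u_{k}}^{2}}}^{1/2}\to 0$ while $\tfrac12\intvol{Y}{\norm{\cE u_{k}}^{2}}\to 0$, I would let $k\to\infty$ to obtain $\cden\hom\at{\mtx}\le\tfrac{\sqrt2\,\abs{\sigma}}{\mu}\abs{\mtx}$, which is the claim with $c=\sqrt2\,\abs{\sigma}/\mu$ (the case $\mtx=0$ being trivial). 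Note that $c$ so obtained depends only on $\cone\txtsub{0}$ and $A$, not on $\mtx$.

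The one genuinely essential use of the hypothesis is the construction of $\sigma$: a mere separating functional $\sigma\in\cone^{\perp}$ (which yields only $\inprod{\sigma}{\mtxalt}\le 0$) would leave the surface energy uncontrolled, whereas the \emph{strict} inequality $\inprod{\sigma}{\mtxalt}\le-\mu\abs{\mtxalt}$ — i.e.\ the non‑degeneracy of $\cone^{\perp}$ — is exactly what turns ``zero bulk energy'' into ``bounded surface energy''. A secondary point to verify is that the minimizing sequence for~\eqref{eq:g_hom} may indeed be taken in $SBD\txtsub{loc}$, so that $\wstn u_{k}=\cE u_{k}\,\cL^{\dmndim}+\rbr{u_{k}\p-u_{k}\m}\odot\nu_{u_{k}}\,\cH^{\dmndim-1}\mres J_{u_{k}}$ and the periodicity identity used above holds literally; and it is worth keeping the term $\intvol{Y}{\abs{\cE u_{k}}}$ all the way to the limit rather than bounding it crudely, since this is what removes any spurious additive constant and gives precisely linear growth.
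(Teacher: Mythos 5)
Your argument is correct and follows essentially the same route as the paper's proof: in both, the key step is to extract from an interior point of $\cone^{\perp}$ a uniform ``compressive stress'' that converts the periodicity identity into a bound on the surface energy by the bulk energy, and to apply this to a minimizing sequence for $\mden\hom$ with vanishing bulk energy. The paper organizes this as Steps~1--2 (a lower bound on $\inprodalt{\cdot}{-\mtx}$ over $\cone$, then a trace estimate via the projection decomposition $\cE u = P_{\cone}\cE u + P_{\cone^{\perp}}\cE u$) followed by a scaling by $t$ in Step~3 that is in fact superfluous, whereas you compress the whole thing into a single duality computation against the fixed $\sigma$ and record the explicit constant $\sqrt{2}\abs{\sigma}/\mu$; the substance is identical, and your remark that competitors for~\eqref{eq:g_hom} may be taken without Cantor part is a sensible caution that the paper also relies on implicitly.
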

\begin{proof}
The proof will be carried out through several steps. \\[-2ex]

Step 1. \emph{If $\mtx$ is an interior point of $\cone$ such that $\norm{\mtx} = 1$, then there exists a constant $c_0 > 0$ such that $c_0 \norm{\mtxalt} \leq \inprod{\mtxalt}{-\mtx}$ for all $ \mtxalt \in \cone^{\perp}$.} 

Since $\mtx$ is an interior point of $\cone$, upon setting
	$\displaystyle c_0 = \min \cbr{\inprodalt{\mtxalt}{-\mtx} \bdot \,\, \mtxalt\in\cone^{\perp}, \,\,\norm{\mtxalt}=1 }$,
we have $c_0 > 0$ and the claim follows. \\[-2ex]

Step 2. \emph{There exists a constant $c > 0$ such that $\intsurf{Y \cap J_{ u}}{ \abs{{u}\p - {u}\m}} \leq c \intvol{Y}{\norm{\cE  u}}$ for all functions $\normalfont u \in BD\txtsub{loc}\at{\R{\dmndim}}$ which are $Y$-periodic and fulfill the unilateral condition $\rbr{{u}\p - {u}\m} \odot \nu_{u} \in \cone_{0} \,\,\cH^{n-1}$-a.e. on the jump set $J_{{u}}$.}

Let $\mtx$ be an interior point of  ${\cone}^{\perp}$ such that $\norm{\mtx}=1$. Integrating by parts and exploiting the $Y$-periodicity of $ u$, we have
\begin{align}
	\begin{aligned}
		\intvol{Y}{ \langle P_{\cone} \cE  u, - \mtx \rangle} 
			&= \intvol{Y}{ \langle \cE  u, - \mtx \rangle}
			- \intvol{Y}{ \langle P_{\cone^{\perp}} \cE  u, - \mtx \rangle} \\[1ex]
			&\leq -\intsurf{Y \cap J_{ u}}{ \langle \rbr{{u}\p - {u}\m} \odot \nu_{u}, - \mtx \rangle}
			+ \intvol{Y}{\Vert{P_{\cone^{\perp}} \cE  u}\Vert}.
	\end{aligned}
\end{align}
By Step 1, since $\cone^{\perp\perp} = \cone$, we derive 
\begin{equation}
	c_0 \intvol{Y}{\norm{ P_{\cone} \cE  u}} 
		+\intsurf{Y \cap J_{ u}}{ \langle \rbr{{u}\p - {u}\m} \odot \nu_{u}, - \mtx \rangle}
		\leq \intvol{Y}{\Vert{P_{\cone^{\perp}} \cE  u}\Vert}.
\label{eq:est_2}
\end{equation}
Analogously, as by assumption $\rbr{{u}\p - {u}\m} \odot \nu_{u} \in \cone_{0} \subseteq \cone$, we also have
\begin{equation}
	\langle \rbr{{u}\p - {u}\m} \odot \nu_{u}, - \mtx \rangle 
		\geq c_0 \norm{\rbr{{u}\p - {u}\m} \odot \nu_{u}}
		\geq c_1 \abs{{u}\p - {u}\m},
\label{eq:est_3}
\end{equation}
with $c_1 > 0$ a suitable constant. Hence, by \eqref{eq:est_2} and \eqref{eq:est_3} we obtain
\begin{equation}
	c_0 \intvol{Y}{\norm{ P_{\cone} \cE  u}}
		+ c_1 \intsurf{Y \cap J_{ u}}{ \abs{{u}\p - {u}\m}}
		\leq \intvol{Y}{\Vert{P_{\cone^{\perp}} \cE  u}\Vert}.
\end{equation}
By the continuity of the projection operator, we conclude 
\begin{equation}
	\intsurf{Y \cap J_{ u}}{ \abs{{u}\p - {u}\m}}
		\leq c \intvol{Y}{\Vert{P_{\cone^{\perp}} \cE  u}\Vert}
		\leq c \intvol{Y}{\norm{\cE  u} },
\end{equation}
with $c = 1/c_1$, which is the desired result. \\[-2ex]

Step 3. \emph{There exists a constant $c > 0$ such that $\normalfont \cden\txtsub{hom}\at{\mtx} \leq c \abs{\mtx}$ for all $\normalfont\mtx \in \mcone\hom$.}

Let $\mtx \in \mcone\hom$. By definition of the masonry homogenized energy density \eqref{eq:g_hom}, we can consider a minimizing sequence $\rbr{u_h}$ in $BD\txtsub{loc}\at{\R{\dmndim}}$ such that $J_{u_{h}} \subseteq \microgeom$, $\rbr{u_h\p-u_h\m} \odot \nu_{u_{h}} \in \cone\txtsub{0}\,\, \cH^{n-1}\text{-a.e.}$, $\tilde{u}_h = u_h-u_{\mtx}$ is $Y$-periodic and
\begin{equation}
	0 = g\hom\at{\mtx} \geq \frac{1}{2}\intvol{Y}{\norm{\cE u_h}^2} - \frac{1}{h^2}.
\end{equation}
For $t > 0$, we set $u_h^t = t u_h$. Since $u_h^t$ is a test function for the computation of the homogenized energy density $\cden\hom$, we get
\begin{align}
	\begin{aligned}
		\cden\hom\at{t\mtx} 
			&\leq \frac{1}{2}\intvol{Y}{\norm{\cE u_h^t}^2} + \intsurf{Y \cap J_{u_h^t}}{\vert{\rbr{u_h^t}\p-\rbr{u_h^t}\m}\vert\,} \\[0.5ex]
			&\leq \frac{t^2}{h^2} + t \intsurf{Y \cap J_{u_h}}{\abs{u_h\p-u_h\m}}.
	\end{aligned}
\label{eq:est_4}
\end{align}
Since $\tilde{u}_h$ is $Y$-periodic and $\rbr{\tilde{u}_{h}\p - \tilde{u}_{h}\m} \odot \nu_{{u}_{h}} \in \cone_{0} \,\,\cH^{n-1}$-a.e., by Step 2 we derive
\begin{align}
	\begin{aligned}
		\intsurf{Y \cap J_{u_h}}{\abs{u_h\p-u_h\m}}
			&\leq c \intvol{Y}{\norm{\cE \tilde{u}_h}}
			\leq c \intvol{Y}{\norm{\cE u_h}} + c \norm{\mtx}
			\leq \frac{c}{h} + c \norm{\mtx}.
	\end{aligned}
\end{align}
Finally, from \eqref{eq:est_4} we obtain
\begin{equation}
	\cden\hom\at{t\mtx} 
		\leq \liminf_h \rbr{\frac{t^2}{h^2} + c\,\frac{t}{h} + c\norm{t\mtx} }
		= c\norm{t\mtx}
\end{equation}
and by the arbitrariness of $t > 0$ the proof is concluded. \qed
\end{proof}

We are finally in position to conclude that $\mcone\hom$ coincides with $\cone\hom$.
\begin{corollary}
Let $\cone$ denote the convex hull of $\normalfont\cone\txtsub{0}$ and assume that the orthogonal cone $\cone^{\perp}$ has non-empty interior. Moreover, let $\normalfont\cone\hom$ and $\normalfont\mcone\hom$ be the two cones respectively defined in \eqref{eq:cone} and \eqref{eq:g_hom}--\eqref{eq:masonry_cone}. Then, $\normalfont \cone\hom = \mcone\hom$.
\end{corollary}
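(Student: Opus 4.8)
The plan is to prove the two inclusions separately. The inclusion $\mcone\hom \subseteq \cone\hom$ is essentially a restatement of Proposition~\ref{thm:linear_growth}, while $\cone\hom \subseteq \mcone\hom$ follows from a soft comparison of the two cell problems~\eqref{eq:f_hom} and~\eqref{eq:g_hom} together with the scaling properties of $\mden\hom$.

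First I would record that $\mden\hom$ is positively $2$-homogeneous: replacing a competitor $u$ in~\eqref{eq:g_hom} by $tu$ with $t > 0$ again yields a competitor (the periodicity constraint is preserved, $J_{tu} = J_u \subseteq \microgeom$, and $\rbr{tu\p - tu\m}\odot\nu_u = t\rbr{u\p-u\m}\odot\nu_u \in \cone_0$ since $\cone_0$ is a cone) whose energy is $t^2$ times that of $u$, whence $\mden\hom\at{t\mtx} = t^2\mden\hom\at{\mtx}$; in particular $\mcone\hom$ is a cone. Now if $\mtx \in \mcone\hom$ then $t\mtx \in \mcone\hom$ for all $t > 0$, and Proposition~\ref{thm:linear_growth} gives $\cden\hom\at{t\mtx} \leq c\abs{t\mtx} = ct\abs{\mtx}$; dividing by $t$ and letting $t \to +\infty$ we obtain $\cden\hom^{\infty}\at{\mtx} \leq c\abs{\mtx} < +\infty$, i.e.~$\mtx \in \dom\at{\cden\hom^{\infty}} = \cone\hom$ by~\eqref{eq:cone}.

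For the reverse inclusion I would use, besides the $2$-homogeneity of $\mden\hom$, that (i) the cell problems~\eqref{eq:f_hom} and~\eqref{eq:g_hom} share the same competitor class, so discarding the nonnegative surface term gives $\mden\hom\at{\mtx} \leq \cden\hom\at{\mtx}$ for every $\mtx \in \symspace$, and (ii) $\cden\hom\at{0} = 0$ (take $u = 0$ in~\eqref{eq:f_hom}), so that, $\cden\hom$ being convex, the ratio $t \mapsto \cden\hom\at{t\mtx}/t$ is nondecreasing and hence $\cden\hom\at{t\mtx} \leq t\,\cden\hom^{\infty}\at{\mtx}$ for all $t > 0$. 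Let now $\mtx \in \cone\hom$, so that $C := \cden\hom^{\infty}\at{\mtx} < +\infty$. Combining the above,
\[
	t^2\,\mden\hom\at{\mtx} = \mden\hom\at{t\mtx} \leq \cden\hom\at{t\mtx} \leq tC \qquad \text{for every } t > 0,
\]
so $\mden\hom\at{\mtx} \leq C/t$ for every $t > 0$; letting $t \to +\infty$ and using $\mden\hom \geq 0$ gives $\mden\hom\at{\mtx} = 0$, i.e.~$\mtx \in \mcone\hom$ by~\eqref{eq:masonry_cone}. Together with the previous step this yields $\cone\hom = \mcone\hom$.

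The whole argument rests on Proposition~\ref{thm:linear_growth} (itself the only place where the assumption on $\cone^{\perp}$ enters); the remaining steps are routine manipulations of positive homogeneity and of the recession function, so I do not anticipate a real obstacle. The single point that deserves a word is the convexity of $\cden\hom$ used in (ii): it holds because $\cden\hom$ is the infimum of the convex functional $u \mapsto \tfrac{1}{2}\intvol{Y}{\norm{\cE u}^2} + \intsurf{J_u\cap Y}{\abs{u\p-u\m}}$ over a competitor class that is convex (its convexity being precisely what assumption~\eqref{eq:cone_0_hp} on $\cone_0$ guarantees) and depends affinely on $\mtx$ through the constraint $u - u_{\mtx}$ $Y$-periodic.
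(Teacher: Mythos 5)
Your argument is correct and follows essentially the same route as the paper: the inclusion $\mcone\hom \subseteq \cone\hom$ is extracted from Proposition~\ref{thm:linear_growth} by passing to the recession function, and the reverse inclusion comes from $\mden\hom \leq \cden\hom$ together with the positive $2$-homogeneity of $\mden\hom$. Your presentation is slightly more verbose (you work the contrapositive of the second inclusion, and make explicit the cone property of $\mcone\hom$ and the convexity bound $\cden\hom\at{t\mtx}\le t\,\cden\hom^{\infty}\at{\mtx}$, all of which the paper leaves implicit), but the substance is identical.
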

\begin{proof} 
By Proposition~\ref{thm:linear_growth}, we obtain that $\mcone\hom \subseteq \cone\hom$. For the opposite inclusion, we first observe that $\mden\hom$ is homogeneous of degree $2$ outside of $\mcone\hom$. Accordingly, since $\cden\hom \geq \mden\hom$, it follows that $\cden\hom$ has growth of order $2$ outside of $\mcone\hom$. Hence $\symspace \setminus \mcone\hom \subseteq \symspace \setminus \cone\hom$, and the proof is accomplished. 
\qed
\end{proof}

\section{Proof of the main theorem}\label{s:proof}
The proof of Theorem~\ref{thm:cohesive_main} will be obtained at the end of the section, as a consequence of the following propositions, which adapt to the present case the localization methods of $\prfGamma$convergence and homogenization. From now on, $\dmn$ will be a fixed bounded open subset of $\R\rngdim$.

\subsection{A compactness result}
In order to prove a compactness result for the integral functionals \eqref{eq:cohesive_functional}, we resort to the localization method of $\Gamma$-convergence \cite{DalMaso_1993, Braides_GC_2002}. Accordingly, we extend the definition of the functionals $\cfun\subeps$ explicitly highlighting the dependence on the open set of definition. Such functionals, defined on $L^2\at{\dmn;\R\rngdim} \times \cA\at\dmn$ and still denoted by $\cfun\subeps$, are then given by
\begin{equation}
	\cfun\subeps\at{u, \openset} = \begin{dcases}
		\frac{1}{2}\intvol{\openset \setminus \varepsilon \microgeom}{\norm{\cE u}^2}
		+ \intsurf{\openset \cap J_u}{\abs{u\p-u\m}}, & u \in \cU\subeps\at{\openset}, \\
		+\infty, \quad &\text{otherwise},
	\end{dcases}
\label{eq:cohesive_functional_loc}
\end{equation}
with the set of admissible displacements $\cU\subeps\at\openset$ given by \eqref{eq:test_displ} with $A$ in place of $\Omega$. The crucial result we prove is the following so-called {\em fundamental estimate} for the family $\rbr{\cfun\subeps}$.
\begin{proposition}\label{prop:fund_est}
For every $\eta > 0$ and for every $\openset', \openset'', \opensetalt \in \cA\at\dmn$ with $\openset' \subset\subset \openset''$, there exists a constant $M > 0$ with the following property: For every $\varepsilon > 0$ and for every $u \in L^2\at{A''; \R\rngdim}$, $v \in L^2\at{B; \R\rngdim}$ there exists a function $\varphi \in C_{0}^{\infty}\at{\openset''}$ with $\varphi = 1$ in a neighbourhood of $\bar{\openset}'$ and $0 \leq \varphi \leq 1$ such that
\begin{equation}
	F_\varepsilon\rbr{\varphi u + \rbr{1-\varphi}v, \openset' \cup \opensetalt} \leq
		\rbr{1+\eta} \sbr{ F_\varepsilon\rbr{u, \openset''} + F_\varepsilon\rbr{v, \opensetalt} }
		+ M \norm{u-v}_{L^2\rbr{S}}^2,
\end{equation}
where $S = \rbr{\openset'' \setminus \openset'} \cap \opensetalt$.
\end{proposition}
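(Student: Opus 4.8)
The plan is to prove the fundamental estimate by the standard cut-off function argument of $\Gamma$-convergence (see \cite{DalMaso_1993, Braides_GC_2002}), carefully adapting it to the present setting where the bulk energy is defined on $\dmn\setminus\varepsilon\microgeom$ and there is an additional surface term on $\varepsilon\microgeom$, and where the admissibility constraint involves the jump set and the cone $\cone_0$. First I would reduce to the nontrivial case in which both $u\in\cU\subeps\at{\openset''}$ and $v\in\cU\subeps\at{\opensetalt}$, since otherwise the right-hand side is $+\infty$ and there is nothing to prove. Set $d = \dist\at{\bar\openset', \partial\openset'')}>0$ and fix an integer $N$ with $N > 1/\eta$ (up to a constant); partition the ``collar'' $\openset''\setminus\bar\openset'$ into $N$ nested open layers $S_1,\dots,S_N$ of width $\sim d/N$ and pick smooth cut-offs $\varphi_i\in C_0^\infty\at{\openset''}$ with $\varphi_i = 1$ on a neighbourhood of $\bar\openset'\cup S_1\cup\dots\cup S_{i-1}$, $\spt\varphi_i$ inside $\bar\openset'\cup S_1\cup\dots\cup S_i$, $0\le\varphi_i\le 1$ and $\abs{\nabla\varphi_i}\le C N/d$. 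For each $i$ define $w_i = \varphi_i u + \rbr{1-\varphi_i}v$.

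The key point is to estimate $F_\varepsilon\rbr{w_i,\openset'\cup\opensetalt}$. On the region where $\varphi_i\equiv 1$ we have $w_i = u$ and on the region where $\varphi_i\equiv 0$ we have $w_i = v$, so the only genuinely new contributions come from the layer $S_i$. First I must check that $w_i\in\cU\subeps\at{\openset'\cup\opensetalt}$: since $u$ and $v$ lie in $SBD$ with jump sets in $\varepsilon\microgeom$, and $\varphi_i$ is smooth, $w_i\in SBD$ with $J_{w_i}\subseteq J_u\cup J_v\subseteq\varepsilon\microgeom$; and where only one of $u,v$ jumps the corresponding jump of $w_i$ is $\varphi_i$ (resp.\ $1-\varphi_i$) times that jump, hence of the form $\lambda\,(\text{element of }\cone_0)$ with $\lambda\ge0$, so it is still in $\cone_0$ (a cone), while where both jump the jump is $\varphi_i\rbr{u\p-u\m}\odot\nu + \rbr{1-\varphi_i}\rbr{v\p-v\m}\odot\nu$, which lies in $\cone_0$ by the convexity assumption \eqref{eq:cone_0_hp} (both summands are of the form $a\odot b$ with the same $a=\nu$, after rescaling). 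In $S_i$ we compute, using $\cE w_i = \varphi_i\cE u + \rbr{1-\varphi_i}\cE v + \nabla\varphi_i\odot\rbr{u-v}$ and the elementary inequality $\norm{p+q}^2\le\rbr{1+\delta}\norm p^2 + \rbr{1+1/\delta}\norm q^2$,
\begin{equation}
\intvol{S_i\setminus\varepsilon\microgeom}{\norm{\cE w_i}^2}
\le\rbr{1+\delta}\intvol{S_i\setminus\varepsilon\microgeom}{\rbr{\norm{\cE u}^2+\norm{\cE v}^2}}
+ C\rbr{1+\tfrac1\delta}\,\tfrac{N^2}{d^2}\,\norm{u-v}_{L^2\rbr{S_i}}^2,
\end{equation}
while for the surface term, since $0\le\varphi_i\le1$, on $S_i\cap J_{w_i}$ we simply have $\abs{w_i\p-w_i\m}\le\abs{u\p-u\m}+\abs{v\p-v\m}$, contributing at most $\intsurf{S_i\cap J_u}{\abs{u\p-u\m}}+\intsurf{S_i\cap J_v}{\abs{v\p-v\m}}$.

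Summing these layer estimates and using that the $N$ layers $S_1,\dots,S_N$ are disjoint subsets of $\openset''\setminus\bar\openset'$, the total ``extra'' bulk and surface energy over all layers is bounded by $\rbr{1+\delta}\rbr{F_\varepsilon\rbr{u,\openset''}+F_\varepsilon\rbr{v,\opensetalt}}$ plus $C\rbr{1+1/\delta}N^2 d^{-2}\norm{u-v}_{L^2\rbr{S}}^2$ with $S=\rbr{\openset''\setminus\openset'}\cap\opensetalt$; more importantly, by averaging there exists an index $i\in\cbr{1,\dots,N}$ for which the layer-$S_i$ contribution to the bulk and surface energies of $u$ and $v$ is at most $\tfrac1N$ of the whole, so that choosing $\varphi = \varphi_i$ gives
\begin{equation}
F_\varepsilon\rbr{\varphi u+\rbr{1-\varphi}v,\openset'\cup\opensetalt}
\le\rbr{1+\delta}\rbr{1+\tfrac1N}\sbr{F_\varepsilon\rbr{u,\openset''}+F_\varepsilon\rbr{v,\opensetalt}}
+ M\norm{u-v}_{L^2\rbr{S}}^2,
\end{equation}
with $M = C\rbr{1+1/\delta}N^2 d^{-2}$; picking $\delta$ small and $N$ large so that $\rbr{1+\delta}\rbr{1+1/N}\le 1+\eta$ finishes the argument (note $M$ then depends only on $\eta,\openset',\openset''$, not on $\varepsilon$, $u$, $v$). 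The main obstacle — and the only place the specific structure of the problem enters, beyond the routine cut-off bookkeeping — is verifying that $w_i$ is admissible, i.e.\ that the convex combination of jumps stays in the cone $\cone_0$; this is exactly where hypothesis \eqref{eq:cone_0_hp} is used and must be invoked with care, distinguishing points of $J_u\setminus J_v$, $J_v\setminus J_u$ and $J_u\cap J_v$ and keeping track of the common normal direction.
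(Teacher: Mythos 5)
Your proposal is correct and follows essentially the same route as the paper's proof: nested open sets, smooth cut-offs $\varphi_i$, $w_i=\varphi_iu+(1-\varphi_i)v$, a verification that $w_i$ stays admissible by using that $J_{w_i}\subseteq J_u\cup J_v\subseteq\varepsilon B$, that $\cone_0$ is a cone, and that hypothesis \eqref{eq:cone_0_hp} handles overlapping jumps with a common normal, followed by a layer estimate and De Giorgi averaging to select a good index. The only cosmetic difference is in constant bookkeeping: you split the bulk term with the weighted Young inequality $(1+\delta)/(1+1/\delta)$ and then tune $\delta$ and $N$, while the paper absorbs the quadratic constant into a single $c$ and chooses $k\geq c/\eta$ so that $1+c/k\leq1+\eta$; the two are interchangeable.
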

\begin{proof}
Let $\eta > 0$, $\openset'$, $\openset''$ and $\opensetalt$ be fixed as in the statement. Let $\openset_1, \dots{}, \openset_k$ be open sets satisfying the property $\openset' \subset\subset \openset_1 \subset\subset \openset_2 \subset\subset \dots{} \subset\subset \openset_{k+1} = \openset''$. For every $i \in \cbr{1, \dots{}, k}$, let $\varphi_i \in C^{\infty}_0\at{\openset_{i+1}}$ with $\varphi_i = 1$ on an open neighbourhood $V_i$ of $\overline\openset_{i}$ and $0 \leq \varphi_i \leq 1$. Let $\varepsilon > 0$ and consider $u \in  L^2\at{\openset''; \R{\rngdim}}$ and $v \in L^2\at{\opensetalt; \R{\rngdim}}$; in particular, we can assume that $u \in \cU\subeps\at{\openset''}$ and $v \in \cU\subeps\at{\opensetalt}$, and arbitrarily extend them respectively outside $\openset''$ and $\opensetalt$.
We set
\begin{equation} 
	w_i = \varphi_i u + \rbr{1-\varphi_i}v 
\end{equation}
on $\openset' \cup \opensetalt$ for every $i \in \cbr{1, \dots{}, k}$. We note that $w_i \in SBD\at{\openset' \cup \opensetalt} \cap L^2\at{\openset' \cup \opensetalt; \R{\rngdim}}$. Moreover, since $\cH^{\dmndim-1}$-a.e.~$J_{w_{i}} \subseteq J_{u} \cup J_{v} \subseteq \varepsilon \microgeom $, and
\begin{equation}
	\rbr{w_i\p - w_i\m} \odot \nu_{w_{i}} = 
		\varphi_i \rbr{u\p -u\m} \odot \nu_{u} + 
		\rbr{1-\varphi_i} \rbr{v\p - v\m} \odot \nu_{v}
\end{equation}
$\cH^{\dmndim-1}$-a.e.~on $J_{w_{i}}$, by the convexity assumption~\eqref{eq:cone_0_hp} on $\cone_0$,  we obtain that $\rbr{w_i\p - w_i\m} \odot \nu_{w_{i}} \in \cone_0$. Hence $w_i \in \cU\subeps\at{\openset' \cup \opensetalt}$ and we have
\begin{equation}
	\cfun\subeps\at{w_i, \openset' \cup \opensetalt}
		= \cfun\subeps\at{u, \rbr{\openset' \cup \opensetalt} \cap V_i} 
		+ \cfun\subeps\at{v, \opensetalt \setminus \spt \varphi_i} 
		+ \cfun\subeps\at{w_i, \opensetalt \cap \rbr{\openset_{i+1}\setminus\bar\openset_i}}.
\label{eq:fund_est_1}
\end{equation}
Let $T_i = \opensetalt \cap \rbr{\openset_{i+1}\setminus\bar\openset_i}$. We estimate the last term:
\begin{align}
	\begin{aligned}
		\cfun\subeps\at{w_i, T_i}
			&= \frac{1}{2}\intvol{T_i \setminus \varepsilon \microgeom}{\norm{\varphi_i \cE u + \rbr{1-\varphi_i}\cE v + \cE \varphi_i \odot \rbr{u-v}}^2} \\[1ex]
			&\hspace{0.5cm} + \intsurf{T_i \cap \rbr{J_{u}\setminus J_{v}}}{\abs{u\p-u\m}}
				+ \intsurf{T_i \cap \rbr{J_{v}\setminus J_{u}}}{\abs{v\p-v\m}} \\[1ex]
			&\hspace{0.5cm} + \intsurf{T_i \cap \rbr{J_{u} \cap J_{v}}}{\abs{\varphi_i \rbr{u\p -u\m} + \rbr{1-\varphi_i} \rbr{v\p - v\m}}}  \\[1ex]
			&\leq c \intvol{T_i \setminus \varepsilon \microgeom}{\rbr{\norm{\cE u}^2 + \norm{\cE v}^2 + \abs{\cE \varphi_i}^2\abs{u-v}^2}} \\[1ex]
			&\hspace{0.5cm} + \intsurf{T_i \cap J_{u}}{\abs{u\p-u\m}}
				+ \intsurf{T_i \cap J_{v}}{\abs{v\p-v\m}}\\[1ex]
			&\leq c\sbr{\cfun\subeps\at{u, T_i} + \cfun\subeps\at{v, T_i}} + c M' \norm{u-v}_{L^2\at{T_i; \R{\rngdim}}}^2,
	\end{aligned}
\end{align}
where we have set $
	M' = \max\limits_{1\leq i\leq k} \norm{\cE\varphi_i}_{L^2\at{T_i; \R{\rngdim}}}^2$.

As the sets $T_i$ are pairwise disjoint and $\bigcup_{i=1}^{k} T_i \subseteq \opensetalt \cap \rbr{\openset''\setminus\openset'} = S$,
there exists $i_0 \in \cbr{1, \dots{}, k}$ such that:
\begin{equation}
	\cfun\subeps\at{w_{i_{0}}, T_{i_{0}}} 
		\leq \frac{1}{k}\sum_{i=1}^{k} \cfun\subeps\at{w_i, T_i}
		\leq \frac{c}{k}\sbr{\cfun\subeps\at{u, \openset''} + \cfun\subeps\at{v, \opensetalt}} + M \norm{u-v}_{L^2\at{S; \R{\rngdim}}}^2,
\end{equation}
where $M = c M'/k$. From equation~\eqref{eq:fund_est_1}, it follows that:
\begin{equation}
	\cfun\subeps\at{w_{i_{0}}, \openset' \cup \opensetalt}
		= \rbr{1+\frac{c}{k}}\sbr{\cfun\subeps\at{u, \openset''} + \cfun\subeps\at{v, \opensetalt} }
		+ M \norm{u-v}_{L^2\at{S; \R{\rngdim}}}^2,
\end{equation}
and the proof is accomplished. \qed
\end{proof}

Next, we derive the following compactness result on the family $\rbr{\cfun\subeps}$.
\begin{proposition}\label{prop:compactness}
Let $\rbr{\varepsilon_h}$ be a sequence of positive numbers converging to $0$. Then there exists a subsequence $\rbr{\varepsilon_{\sigma\rbr{h}}}$ of $\rbr{\varepsilon_h}$ and a functional~$\cfun \bdot L^2\at{\dmn; \R\rngdim} \times \cA\at\dmn \rightarrow \sbr{0, +\infty}$ such that 
\begin{equation}
	\cfun\at{\cdot, \openset} = \prfGamma\lim_{h} \cfun_{\varepsilon_{\sigma\rbr{h}}}\at{\cdot, \openset}
\end{equation}
for every $\openset \in \cA_0\at\dmn$ with respect to the $L^2\at{\openset; \R{\rngdim}}$-topology. Moreover, for every $u \in L^2\at{\dmn; \R{\rngdim}}$, the set function $\cfun\at{u, \cdot}$ is the restriction to $\cA\at{\dmn}$ of a Borel measure on $\dmn$.
\end{proposition}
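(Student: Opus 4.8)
The plan is to follow the \emph{localization method of $\Gamma$-convergence} (see \cite{DalMaso_1993, Braides_GC_2002}): abstract $\Gamma$-compactness yields a subsequence along which the $\Gamma$-limit exists on a countable collection of open sets, and the fundamental estimate of Proposition~\ref{prop:fund_est} is exactly what upgrades this to $\Gamma$-convergence on \emph{all} open sets and lets one recognise the limit as the trace of a Borel measure in its set variable.

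First I would fix a countable \emph{rich} family $\mathcal{R}\subseteq\cA\at\dmn$, i.e.~such that for all $\openset',\openset''\in\cA\at\dmn$ with $\openset'\subset\subset\openset''$ there is $C\in\mathcal{R}$ with $\openset'\subset\subset C\subset\subset\openset''$ (for instance, all finite unions of open balls with rational centres and radii whose closures lie in $\dmn$). Since $L^2\at{\dmn; \R\rngdim}$ is separable, for each $\openset\in\mathcal{R}$ the sequence $\rbr{\cfun\subepsh\at{\cdot,\openset}}$ is $\prfGamma$precompact, so a diagonal argument over the countable set $\mathcal{R}$ produces a subsequence $\rbr{\varepsilon_{\sigma\rbr{h}}}$ along which $\cfun_{\varepsilon_{\sigma\rbr{h}}}\at{\cdot,\openset}$ $\prfGamma$converges, in the $L^2\at{\openset; \R\rngdim}$-topology, to some $\cfun\at{\cdot,\openset}$, for every $\openset\in\mathcal{R}$. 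For arbitrary $\openset\in\cA\at\dmn$ I would then \emph{define} $\cfun\at{u,\openset}=\sup\cbr{\cfun\at{u,C}\bdot C\in\mathcal{R},\ C\subset\subset\openset}$, which is increasing and inner regular by construction, with $\cfun\at{u,\emptyset}=0$. By the classical argument of the localization method, the fundamental estimate implies that the set function $\openset\mapsto\prfGamma\limsup_h\cfun_{\varepsilon_{\sigma\rbr{h}}}\at{u,\openset}$ is inner regular; since it coincides with the $\prfGamma\liminf$ on the rich family $\mathcal{R}$, an elementary monotonicity argument then forces $\prfGamma\liminf_h\cfun_{\varepsilon_{\sigma\rbr{h}}}\at{u,\openset}=\prfGamma\limsup_h\cfun_{\varepsilon_{\sigma\rbr{h}}}\at{u,\openset}=\cfun\at{u,\openset}$ for every $\openset\in\cA\at\dmn$, in particular for $\openset\in\cA_0\at\dmn$.

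It then remains to verify, for each fixed $u\in L^2\at{\dmn; \R\rngdim}$, the four hypotheses of the De Giorgi--Letta measure criterion for the set function $\cfun\at{u,\cdot}$ on $\cA\at\dmn$: monotonicity and inner regularity hold by construction; superadditivity follows from the additivity $\cfun\subeps\at{v,\openset_1\cup\openset_2}=\cfun\subeps\at{v,\openset_1}+\cfun\subeps\at{v,\openset_2}$ on disjoint open sets (the energy splits and the constraint is local) together with the superadditivity of $\liminf$; and subadditivity uses Proposition~\ref{prop:fund_est} once more: given arbitrary $\openset_1,\openset_2\in\cA\at\dmn$ and $C\in\mathcal{R}$ with $C\subset\subset\openset_1\cup\openset_2$, one picks $\openset_i'\subset\subset\openset_i''\subset\subset\openset_i$ with $C\subseteq\openset_1'\cup\openset_2'$, glues recovery sequences for $\cfun\at{u,\openset_1''}$ and $\cfun\at{u,\openset_2'}$ --- both converging to $u$, so the error term in the fundamental estimate vanishes in the limit --- and obtains $\cfun\at{u,C}\le\rbr{1+\eta}\sbr{\cfun\at{u,\openset_1}+\cfun\at{u,\openset_2}}$; letting $\eta\to0^+$ and taking the supremum over such $C$ gives $\cfun\at{u,\openset_1\cup\openset_2}\le\cfun\at{u,\openset_1}+\cfun\at{u,\openset_2}$. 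An application of the De Giorgi--Letta criterion (see \cite{DalMaso_1993}) then yields the claimed measure.

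The step I expect to be the main obstacle is the passage from $\Gamma$-convergence on $\mathcal{R}$ to $\Gamma$-convergence on every open subset of $\dmn$ --- equivalently, the inner regularity of the $\prfGamma\limsup$ --- which is exactly the point the fundamental estimate is built for: it lets one patch competitors defined on overlapping open sets with an error controlled by their $L^2$-distance, while the patched function stays admissible thanks to the convexity assumption~\eqref{eq:cone_0_hp} (already used in the proof of Proposition~\ref{prop:fund_est}). A minor point is that the functionals equal $+\infty$ off $\cU\subeps\at{\openset}$; this is harmless, since the gluing of Proposition~\ref{prop:fund_est} always returns admissible functions and the additivity used for superadditivity holds for the constraint as well. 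The remaining measure-theoretic bookkeeping is standard.
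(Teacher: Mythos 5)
Your proposal is correct and follows exactly the same route the paper intends: the paper itself gives no argument beyond citing the localization method of $\Gamma$-convergence in Dal Maso's book (Theorems 8.5, 14.23, 15.18 and 18.5), and your steps --- diagonal extraction over a countable rich family, inner-regular extension, inner regularity of the $\Gamma$-$\limsup$ via the fundamental estimate, and the De Giorgi--Letta criterion with the fundamental estimate supplying subadditivity --- are precisely what those cited theorems carry out. You have merely unfolded the citation.
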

\begin{proof}
Using Proposition~\ref{prop:fund_est}, the proof follows from the general localization method of $\prfGamma$convergence (for an illustrative description of the method, see Chapter 16 in \cite{Braides_GC_2002}; a detailed proof of the method is given in~\cite{DalMaso_1993}, where, relying on the fundamental estimate, it is developed through Theorem 8.5, Theorem 14.23, Theorem 15.18 and Theorem 18.5). \qed

\end{proof}

\subsection{Integral representation on $H^1\at{\dmn; \R\rngdim}$}
On account of Proposition~\ref{prop:compactness}, we intend to identify the $\Gamma$-limit of a convergent sequence of functionals $\cfun\subeps$. Therefore, we assume that a sequence $\rbr{\varepsilon_h}$ of positive numbers converging to $0$ is given, such that for every $\openset \in \cA_0\at\dmn$ the limit
\begin{equation}
	\cfun\at{\cdot, \openset} = \prfGamma\lim_{h} \cfun\subepsh\at{\cdot, \openset}
\label{eq:Gamma_converging_subsequence}
\end{equation}
exists on $BD\at\openset \cap L^2\at{\openset; \R{\rngdim}}$. Unfortunately, in investigating a representation of the limit $\cfun$, we cannot directly resort to existing general results 
because the functionals $\cfun\subeps$ do not fulfill standard growth conditions on the whole space $BD\at\openset \cap L^2\at{\openset; \R{\rngdim}}$. To bypass this difficulty, we first restrict our attention to the behavior of $\cfun$ on $H^1\at{\openset; \R\rngdim}$, where the growth condition of order $2$ from above  can be exploited. Then, we extend such representation on $BD\at\openset \cap L^2\at{\openset; \R{\rngdim}}$ by convexity arguments.
 
A first result concerns the translation-invariance properties of the limit $\cfun$.
\begin{lemma}\label{lemma:translation_invariance}
Let $\cfun$ be defined as in \eqref{eq:Gamma_converging_subsequence}. Then, for every $\openset \in \cA_0\at\dmn$ and $u \in \dom\cfun\at{\cdot, \openset}$, the following properties hold:
\begin{equation}\normalfont
	\text{(i) } \cfun\at{u+a, \openset} = \cfun\at{u, \openset}, \quad\quad
	\text{(ii) } \cfun\at{\tau_{y}{u}, \tau_{y}{\openset}} = \cfun\at{u, \openset},
\end{equation}
for every $a \in \R\rngdim$ and $y \in \R\dmndim$, with $\rbr{\tau_{y}{u}}\at{x}=u\at{x-y}$ and $\tau_{y}{\openset} = \openset + y$.
\end{lemma}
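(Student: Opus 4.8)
The plan is to prove both invariance properties directly from the definition of $\prfGamma$-limit, using that each approximating functional $\cfun\subepsh$ enjoys the corresponding invariance (exactly or asymptotically).

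\medskip

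\noindent\textbf{Step 1: translation in the dependent variable.} First I would observe that for each fixed $\varepsilon$ and each constant $a \in \R\rngdim$ one has $\cE(u+a) = \cE u$ and $J_{u+a} = J_u$ with the same traces jump $(u+a)\p - (u+a)\m = u\p - u\m$; moreover the constraint $(u\p-u\m)\odot\nu_u \in \cone_0$ and the condition $J_u \subseteq \varepsilon\microgeom$ are unaffected by adding $a$. Hence $u \in \cU\subeps\at\openset$ iff $u+a \in \cU\subeps\at\openset$, and $\cfun\subeps\at{u+a, \openset} = \cfun\subeps\at{u, \openset}$ identically. Then, given a recovery sequence $u_h \to u$ in $L^2\at{\openset;\R\rngdim}$ realizing $\cfun\at{u,\openset} = \lim_h \cfun\subepsh\at{u_h,\openset}$, the translated sequence $u_h + a \to u + a$ is a competitor in the $\limsup$ inequality, so $\cfun\at{u+a,\openset} \le \lim_h \cfun\subepsh\at{u_h+a,\openset} = \cfun\at{u,\openset}$; applying this with $u \rightsquigarrow u+a$ and $a \rightsquigarrow -a$ gives the reverse inequality, proving (i).

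\medskip

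\noindent\textbf{Step 2: translation in the independent variable.} For (ii) the key point is that the microgeometry $\varepsilon\microgeom$ is \emph{not} invariant under an arbitrary translation $\tau_y$ unless $y \in \varepsilon\Z{\rngdim}$, so the identity $\cfun\subeps\at{\tau_y u, \tau_y\openset} = \cfun\subeps\at{u,\openset}$ fails at fixed $\varepsilon$. The standard remedy is to translate $y$ to the nearest point of $\varepsilon\Z{\rngdim}$: set $y_h = \varepsilon_h \lfloor y/\varepsilon_h \rfloor$ (componentwise integer part scaled by $\varepsilon_h$), so that $y_h \to y$ as $h \to \infty$ and $\tau_{y_h}(\varepsilon_h\microgeom) = \varepsilon_h\microgeom$ by the $Y$-periodicity of $\microgeom$. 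Then $\cfun\subepsh\at{\tau_{y_h} w, \tau_{y_h}\openset} = \cfun\subepsh\at{w,\openset}$ exactly, for every $w$. Given a recovery sequence $u_h \to u$ in $L^2\at\openset$ for $\cfun\at{u,\openset}$, consider $v_h := \tau_{y_h} u_h$. One checks $v_h \to \tau_y u$ in $L^2\at{\tau_y\openset;\R\rngdim}$ — this uses continuity of translation in $L^2$ together with the bound $|y_h - y| \le \sqrt{\dmndim}\,\varepsilon_h \to 0$, plus the $L^2$-convergence $u_h \to u$; more precisely $\norm{\tau_{y_h} u_h - \tau_y u}_{L^2} \le \norm{\tau_{y_h}(u_h - u)}_{L^2} + \norm{\tau_{y_h} u - \tau_y u}_{L^2}$ and both terms vanish. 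Hence $v_h$ is admissible in the $\limsup$ inequality on the set $\tau_y\openset$, giving $\cfun\at{\tau_y u, \tau_y\openset} \le \lim_h \cfun\subepsh\at{v_h, \tau_{y_h}\openset} = \lim_h \cfun\subepsh\at{u_h,\openset} = \cfun\at{u,\openset}$. The reverse inequality follows by applying the same argument to $\tau_{-y}$ and $\tau_y u$ in place of $y$ and $u$, since $\tau_{-y}\tau_y \openset = \openset$.

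\medskip

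\noindent\textbf{Main obstacle.} The only subtlety — and the step I expect to require the most care — is the discretization of the translation in Step 2: one must commit to translating by $y_h \in \varepsilon_h\Z{\rngdim}$ rather than by $y$ itself, and then verify that replacing $y$ by $y_h$ does not spoil the $L^2$-convergence of the recovery sequence nor the domain of definition (the sets $\tau_{y_h}\openset$ converge to $\tau_y\openset$, which is harmless because the energy is local and $\openset$ is a fixed bounded open set; for $h$ large, $\tau_{y_h}\openset$ and $\tau_y\openset$ agree up to a null-measure boundary layer, and one may work on an open set compactly contained in the intersection and pass to the limit, or simply note that $\cfun\at{\cdot,\cdot}$ is a measure in its second argument by Proposition~\ref{prop:compactness}). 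Everything else is the routine double application of the $\prfGamma$-$\liminf$/$\limsup$ inequalities. I would present the proof for property~(ii) and indicate that~(i) is the easier, exact version of the same scheme.
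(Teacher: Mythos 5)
Your proposal is correct, and it fills in exactly the standard argument that the paper defers to by citation (the paper's ``proof'' is a one-line reference to Lemma~3.7 of Braides--Defranceschi--Vitali, \emph{Homogenization of free discontinuity problems}, where the same discretization-of-the-translation scheme is used). Part~(i) is handled correctly as an exactly invariant case. For part~(ii) the discretization $y_h=\varepsilon_h\lfloor y/\varepsilon_h\rfloor$ and the exact invariance of $\cfun\subepsh$ under $\tau_{y_h}$ are the right ideas; the one step you leave slightly loose is the substitution
\begin{equation}
\cfun\at{\tau_y u,\tau_y\openset}\le\liminf_h\cfun\subepsh\at{v_h,\tau_{y_h}\openset},
\end{equation}
where the set on the right is $\tau_{y_h}\openset$ rather than $\tau_y\openset$: the $\prfGamma\liminf$ inequality on $\tau_y\openset$ literally compares against $\cfun\subepsh\at{v_h,\tau_y\openset}$, and the two domains do not coincide. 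The fix you allude to in your final paragraph is correct and should be made explicit: for any $\openset'\subset\subset\tau_y\openset$ one has $\openset'\subset\subset\tau_{y_h}\openset$ for $h$ large, so by monotonicity $\cfun\subepsh\at{v_h,\openset'}\le\cfun\subepsh\at{v_h,\tau_{y_h}\openset}=\cfun\subepsh\at{u_h,\openset}$; the $\prfGamma\liminf$ inequality on $\openset'$ then gives $\cfun\at{\tau_y u,\openset'}\le\cfun\at{u,\openset}$, and passing to the supremum over $\openset'\subset\subset\tau_y\openset$ via inner regularity of the measure $\cfun\at{\tau_y u,\cdot}$ (Proposition~\ref{prop:compactness}) yields the claim. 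With that sentence inserted, the proof is complete and identical in spirit to the one the paper cites.
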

\begin{proof}
These are general properties of the $\prfGamma$limit of periodic energies, which can be deduced with minor modifications e.g.~as in Lemma 3.7 in~\cite{Braides_Vitali_ARMA_1996}.   \qed
\end{proof}

By exploiting that the limit $\cfun$ satisfies a growth condition of order $2$ on $H^1\at{\dmn; \R{\rngdim}}$, we can then prove that it admits an integral representation. In particular, the relevant density function is convex, satisfies a growth condition of order $2$ and depends on the symmetric part of the gradient only.
\begin{proposition}\label{prop:Buttazzo_DalMaso}
There exists a unique convex function~$\cden \bdot \mtxspace \in [0, +\infty)$ enjoying the following properties:
\begin{enumerate}[label=\emph{(\roman*)}]
	\item $\cden\at{\mtx} \leq c \rbr{1 + \abs{\mtx}^2}$ for every $\mtx \in \mtxspace$, with $c > 0$ a suitable constant;
	\item $\cfun\at{u, \openset} = \intvol{\openset} {\cden\at{\cE u}}$ for every~$\openset \in \cA\at{\dmn}$ and $u \in H^1\at{\openset; \R{\rngdim}}$.
\end{enumerate}
\end{proposition}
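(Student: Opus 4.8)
The plan is to invoke a classical integral representation theorem for $\Gamma$-limits on Sobolev spaces, in the spirit of Buttazzo--Dal Maso, after verifying that the limit functional $\cfun$, restricted to $H^1$, satisfies the required structural hypotheses. First I would record the ingredients already available: by Proposition~\ref{prop:compactness}, $\cfun(u,\cdot)$ is (the restriction of) a Borel measure for each fixed $u$; by Lemma~\ref{lemma:translation_invariance}, $\cfun$ is invariant under additions of constants and under simultaneous translations of the function and the domain; and since $\cfun\subepsh(u,\openset)\le \tfrac12\int_{\openset}\norm{\cE u}^2$ whenever $u\in H^1(\openset;\R\rngdim)$ (because such $u$ is an admissible competitor with empty jump set, once $\varepsilon_h\microgeom$ is irrelevant to a jump-free map), the $\Gamma$-liminf inequality gives the upper bound $\cfun(u,\openset)\le \tfrac12\int_{\openset}\norm{\cE u}^2\le \tfrac{M^2}{2}\int_{\openset}\abs{\cE u}^2$ on $H^1$. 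Lower semicontinuity of $\cfun(\cdot,\openset)$ in the $L^2$-topology is automatic, being a $\Gamma$-limit.

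Next I would check the two remaining hypotheses needed for the representation: measure-theoretic superadditivity/subadditivity (inner regularity and the nesting property of $\openset\mapsto\cfun(u,\openset)$), which is precisely the content of ``$\cfun(u,\cdot)$ extends to a Borel measure'' from Proposition~\ref{prop:compactness}; and locality of $\cfun(\cdot,\openset)$, i.e.\ $\cfun(u,\openset)=\cfun(v,\openset)$ whenever $u=v$ a.e.\ on $\openset$ — immediate since the defining functionals $\cfun\subepsh$ are local and $\Gamma$-convergence preserves locality on a fixed open set. With measure property, locality, $L^2$-lower semicontinuity, translation invariance in the $x$-variable, and the quadratic growth bound from above in hand, the Buttazzo--Dal Maso integral representation theorem (see \cite{Buttazzo_1989} or \cite{DalMaso_1993}) yields a Borel integrand $\cden=\cden(x,\mtx)$ with $\cfun(u,\openset)=\intvol{\openset}{\cden(x,\cE u)}$ for $u\in H^1(\openset;\R\rngdim)$, and the invariance under translations $\tau_y$ forces $\cden$ to be independent of $x$. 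The dependence on $\cE u$ rather than on the full gradient $\wnabla u$ follows from property~(i) of Lemma~\ref{lemma:translation_invariance} applied with affine increments, or more precisely from the fact that $\cfun\subepsh(u_{\mtx},\openset)$ depends on $\mtx$ only through $\mtx\symsup$: adding an infinitesimal rigid rotation does not change the competitor's energy, so the integrand must vanish to match on skew-symmetric arguments, hence $\cden(\mtx)=\cden(\mtx\symsup)$. Convexity of $\cden$ comes from the $L^2$-lower semicontinuity of $\cfun(\cdot,\openset)$ together with the integral representation, via the standard fact that lower semicontinuity of an autonomous integral functional with respect to weak-$L^2$ convergence of gradients forces quasiconvexity, which for integrands depending only on the symmetric part coincides with convexity (indeed, by the Ambrosio--Dal Maso type lower-semicontinuity characterization, or simply by testing with oscillating affine-plus-periodic perturbations). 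Uniqueness of $\cden$ is standard: two integrands representing the same functional on all of $H^1$ and all open sets agree a.e., and being convex (hence continuous) they agree everywhere.

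The main obstacle I expect is not any single deep step but rather the bookkeeping of verifying the precise hypotheses of whichever version of the representation theorem one cites — in particular confirming that the measure property from Proposition~\ref{prop:compactness} supplies exactly the inner-regularity and (super/sub)additivity required, and handling the mild subtlety that the functionals $\cfun\subepsh$ are $+\infty$ off $\cU\subepsh(\openset)$ so that finiteness of the $\Gamma$-limit on $H^1$ must be argued (which the quadratic upper bound does). A secondary technical point is justifying the reduction from $\wnabla u$-dependence to $\cE u$-dependence and the passage from quasiconvexity to plain convexity; both are classical for symmetric-gradient functionals but deserve a sentence. Once these are settled, the statement follows directly, and I would simply assemble: growth bound $\Rightarrow$ finiteness on $H^1$; measure property $+$ locality $+$ lower semicontinuity $+$ translation invariance $\Rightarrow$ autonomous integral representation with convex integrand; rigid-motion invariance $\Rightarrow$ dependence on $\cE u$ only; and uniqueness from convexity.
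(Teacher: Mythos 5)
Your proposal follows the same overall strategy as the paper: verify the hypotheses of the Buttazzo--Dal Maso representation theorem (measure property, locality, translation invariance, quadratic growth from above, lower semicontinuity), obtain a Carath\'eodory integrand $\cden(x,\mtx)$ with $\cfun(u,\openset)=\int_{\openset}\cden(x,\wnabla u)\,\mathrm{d}x$, then use translation invariance in $x$ to drop the $x$-dependence and the invariance under addition of skew-symmetric linear maps to reduce to a function of $\cE u$ only; the reduction to $\cE u$-dependence matches the paper's recovery-sequence argument verbatim. The one genuine difference is the convexity step. You deduce convexity of $\cden$ via the chain ``weak-$H^1$ lower semicontinuity of an autonomous integral $\Rightarrow$ quasiconvexity $\Rightarrow$ (for integrands depending only on the symmetric part) convexity,'' which is correct but invokes the nontrivial fact that quasiconvex functions of the symmetric gradient are convex. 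The paper instead observes that each $\cfun_{\varepsilon_h}(\cdot,\openset)$ is convex (a quadratic-plus-linear functional restricted to the convex constraint set $\cU_{\varepsilon_h}(\openset)$, which is convex by assumption~\eqref{eq:cone_0_hp}), so the $\Gamma$-limit $\cfun(\cdot,\openset)$ is convex, and the Buttazzo--Dal Maso formula $\cden(x,\mtx)=\limsup_{\rho\to0}\cfun(u_{\mtx},B_\rho(x))/|B_\rho(x)|$ transfers this convexity to $\cden$ at once. Your route gets there, but the paper's is shorter and avoids the quasiconvexity detour entirely.
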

\begin{proof} 
The functional $\cfun \bdot H^1\at{\dmn; \R{\rngdim}} \times \cA\at\dmn \rightarrow [0, +\infty)$ enjoys the assumptions required in Theorem 1.1 in \cite{Buttazzo_DalMaso_NATMA_1985}. Namely,~for every $u, \, v \in H^1\at{\dmn; \R{\rngdim}}$ and $\openset \in \cA\at\dmn$:
\begin{enumerate}[label={(\alph*)}]
	\item $\cfun\at{u, \openset} = \cfun\at{v, \openset}$ provided $\fixed{u}{\openset} = \fixed{v}{\openset}$;
	\item the set function $\cfun\at{u, \cdot}$ is the restriction to $\cA\at\dmn$ of a Borel measure on $\dmn$;
	\item $\cfun\at{u, \openset} = \cfun\at{u + a, \openset}$ for every $a \in \R{\rngdim}$;
	\item $\cfun\at{u, \openset} \leq c \intvol{\openset}{\rbr{1+\abs{\wnabla u}^2}}$, with $c$ a positive constant;
	\item $\cfun\at{\cdot,\openset}$ is sequentially weakly lower semicontinuous on $H^1\at{\dmn; \R{\rngdim}}$.
\end{enumerate}
Properties (b) and (c) follow from Proposition~\ref{prop:compactness} and Lemma~\ref{lemma:translation_invariance}, and properties (a), (d) and (e) are consequences of the representation of $\cfun\at{\cdot,\openset}$ as $\Gamma$-limit in~\eqref{eq:Gamma_converging_subsequence}. 

Hence, the Carath{\'e}odory function $\cden \bdot \R{\dmndim} \times \mtxspace \rightarrow [0, +\infty)$ defined by
\begin{equation}
	\cden\at{x, \mtx} = \limsup_{\rho \rightarrow 0} \frac{\cfun\at{u_{\mtx}, B_{\rho}\at{x}}}{\abs{B_{\rho}\at{x}}},
\label{eq:int_representation_1}
\end{equation}
gives the integral representation
\begin{equation}
	\cfun\at{u, \openset} = \intvol{\openset} {\cden\at{x, \wnabla u}},
\label{eq:int_representation_2}
\end{equation}
for every $\openset \in \cA\at{\dmn}$ and $u \in H^1\at{\openset; \R{\rngdim}}$. In particular, since $\cfun\at{\cdot,\openset}$ is convex for every $\openset \in \cA\at{\dmn}$, definition~\eqref{eq:int_representation_1} implies that the function $\cden$ is convex. Moreover, as a consequence of property (d),  the function $\cden$ satisfies the growth condition
\begin{equation}
	\cden\at{x,\mtx} \leq c\rbr{1 + \abs{\mtx}^2}.
\label{eq:growth_int_rep}
\end{equation}

We now show that $\cden$ is constant with respect to its first argument. Let $x$, $y \in \R\dmndim$ be fixed. Upon observing that $\tau_{y-x}u_{\mtx} = u_{\mtx} - \mtx \rbr{y-x}$ and $\tau_{y-x}B_{\rho}\at{x} = B_{\rho}\at{y}$, from Lemma~\ref{lemma:translation_invariance} we obtain
\begin{align}
	\begin{aligned}
		\cden\at{x, \mtx} 
			&= \limsup_{\rho \rightarrow 0} \frac{\cfun\at{u_{\mtx}, B_{\rho}\at{x}}}{\abs{B_{\rho}\at{x}}} 
			= \limsup_{\rho \rightarrow 0} \frac{\cfun\at{u_{\mtx} - \mtx \rbr{y-x}, B_{\rho}\at{y}}}{\abs{B_{\rho}\at{y}}}\\[1ex]
			&= \limsup_{\rho \rightarrow 0} \frac{\cfun\at{u_{\mtx}, B_{\rho}\at{y}}}{\abs{B_{\rho}\at{y}}}
			= \cden\at{y, \mtx}.
	\end{aligned}
\end{align}
Moreover, $\cden$ depends only on the symmetric part of the gradient, i.e.~$\cden\at{\mtx}=\cden\at{\mtxalt}$ whenever $\xi$, $\eta \in \mtxspace$ satisfy $\mtx\symsup = \mtxalt\symsup$. In fact, let $\rbr{u\subepsh}$ in $\cU\subepsh\at{\openset}$ be a sequence converging to $u_{\mtx}$ in $L^2\at{\openset; \R{\rngdim}}$ and such that  
\begin{equation}
	\lim_{h} \cfun\subepsh\at{u\subepsh, \openset}  
		= \cfun\at{u_{\mtx}, \openset} 
		= \intvol{\openset} {\cden\at{\mtx}} 
		= \abs{\openset} \cden\at{\mtx}.
\end{equation}			
Set $v\subepsh = u\subepsh + \rbr{\mtxalt-\mtx}x$ and note that~$v\subepsh \in \cU\subepsh\at{\openset}$, $v\subepsh$ converges to $u_{\mtxalt}$ in $L^2\at{\openset; \R{\rngdim}}$ and~$\cfun\subepsh\at{u\subepsh, \openset} = \cfun\subepsh\at{v\subepsh, \openset}$. Hence:
\begin{equation}
	\abs{\openset} \cden\at{\mtxalt}
		= \intvol{\openset} {\cden\at{\mtxalt}}
		= \cfun\at{u_{\mtxalt}, \openset} 
		\leq \liminf_{h}\cfun\subepsh\at{v\subepsh, \openset} 
		= \lim_{h}\cfun\subepsh\at{u\subepsh, \openset} 
		= \abs{\openset} \cden\at{\mtx},
\end{equation}
and by symmetry~$\cden\at{\mtxalt} = \cden\at{\mtx}$. Finally, (i) follows from~\eqref{eq:growth_int_rep} and (ii) follows from \eqref{eq:int_representation_2}, whereas the uniqueness of $\cden$ follows from~\eqref{eq:int_representation_1}. \qed
\end{proof}

\subsection{Characterization of the homogenized energy density} 
In the previous section we have proven that the $\prfGamma$limit $\cfun$ of a convergent sequence of functionals $\rbr{\cfun\subepsh}$ admits an integral representation on $H^1\at{\dmn; \R{\rngdim}}$. We are now in a position to show that the density $\cden$ indeed coincides with the homogenized energy density $\cden\hom$ defined in \eqref{eq:f_hom}. In particular, that implies that $\cden$ does not depend on the sequence $\rbr{\varepsilon_h}$.

\begin{proposition}\label{thm:f_hom_leq}
	$\normalfont \cden\at{\mtx} \leq \cden\hom\at{\mtx}$ for every $\mtx \in \mtxspace$.
\end{proposition}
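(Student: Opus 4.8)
The plan is to exhibit, for each fixed $\mtx \in \mtxspace$, a recovery sequence $\rbr{u\subepsh}$ converging to $u_{\mtx}$ in $L^2\at{\openset; \R\rngdim}$ whose energies $\cfun\subepsh\at{u\subepsh, \openset}$ are asymptotically bounded by $\abs{\openset}\,\cden\hom\at{\mtx}$, so that the $\prfGamma\liminf$ inequality forces $\abs{\openset}\cden\at{\mtx} = \cfun\at{u_{\mtx}, \openset} \leq \abs{\openset}\,\cden\hom\at{\mtx}$. Since $\cden$ depends only on the symmetric part of the gradient (Proposition~\ref{prop:Buttazzo_DalMaso}), it suffices to treat $\mtx \in \symspace$.

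First I would fix an arbitrary competitor $w \in BD\txtsub{loc}\at{\R\dmndim}$ in the cell problem~\eqref{eq:f_hom}, i.e.\ with $J_w \subseteq \microgeom$, $\rbr{w\p - w\m}\odot\nu_w \in \cone_0$ $\cH^{\dmndim-1}$-a.e., and $w - u_{\mtx}$ being $Y$-periodic; write $\tilde w = w - u_{\mtx}$. The candidate recovery sequence is the usual periodic oscillation
\begin{equation}
	u\subepsh\at{x} = u_{\mtx}\at{x} + \varepsilon_h\, \tilde w\!\rbr{\tfrac{x}{\varepsilon_h}}.
\end{equation}
Then $u\subepsh \to u_{\mtx}$ in $L^2$ (even in $L^\infty$) since $\tilde w$ is bounded, $J_{u\subepsh} \subseteq \varepsilon_h\microgeom$, the unilateral constraint on the jump of $u\subepsh$ is inherited from that of $w$ (the constraint is invariant under the scaling, as $\rbr{u\subepsh\p - u\subepsh\m}\odot\nu = \rbr{w\p - w\m}\odot\nu_w$ pointwise after rescaling), and $\cE u\subepsh\at{x} = \cE w\!\rbr{x/\varepsilon_h}$, so $u\subepsh \in \cU\subepsh\at{\openset}$ is admissible. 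Computing the energy, the bulk term is $\tfrac12\int_{\openset\setminus\varepsilon_h\microgeom}\norm{\cE w\rbr{x/\varepsilon_h}}^2\,\mathrm{d}x$, which by the Riemann–Lebesgue lemma for periodic functions converges to $\abs{\openset}\cdot\tfrac12\int_Y\norm{\cE w}^2$; the surface term $\int_{\openset\cap J_{u\subepsh}}\abs{u\subepsh\p - u\subepsh\m}\,\mathrm{d}\cH^{\dmndim-1}$ scales so that it converges to $\abs{\openset}\int_{J_w\cap Y}\abs{w\p - w\m}\,\mathrm{d}\cH^{\dmndim-1}$ — here I would invoke the standard weak-$*$ convergence of the rescaled measures $\varepsilon_h^{\dmndim-1}\cH^{\dmndim-1}\mres\rbr{\varepsilon_h\microgeom}$ to $\rbr{\int_{\microgeom\cap Y}1\,\mathrm{d}\cH^{\dmndim-1}}\,\cL^\dmndim$ weighted appropriately, combined with continuity of the integrand. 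Passing to the limit and then taking the infimum over all competitors $w$ yields $\cden\at{\mtx} \leq \cden\hom\at{\mtx}$.

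There are two technical points to handle carefully. The minor one is boundary effects: the rescaled function $u\subepsh$ need not satisfy any boundary condition on $\partial\openset$, but since we only want the $\prfGamma\liminf$ applied to the specific target $u_{\mtx}$ and $\cfun\at{u_{\mtx}, \openset} = \abs{\openset}\cden\at{\mtx}$ by Proposition~\ref{prop:Buttazzo_DalMaso}, no boundary modification is needed — the raw oscillating sequence converges in $L^2\at{\openset}$ and that is all the $\prfGamma$-limit definition requires. The main obstacle, and the step I expect to be most delicate, is the rigorous justification of the convergence of the surface energy term: one must show that for a general rectifiable periodic set $\microgeom$ and a general $BD\txtsub{loc}$ competitor $w$, the quantity $\int_{\openset\cap\varepsilon_h\microgeom}\abs{w\p\rbr{x/\varepsilon_h} - w\m\rbr{x/\varepsilon_h}}\,\mathrm{d}\cH^{\dmndim-1}\at{x}$ converges to $\abs{\openset}\int_{Y\cap J_w}\abs{w\p - w\m}\,\mathrm{d}\cH^{\dmndim-1}$. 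This is a periodic-homogenization statement for surface integrals; I would reduce it to weak-$*$ convergence of the measures $\mu_h = \varepsilon_h^{\dmndim-1}\,\abs{w\p - w\m}\!\rbr{\tfrac{\cdot}{\varepsilon_h}}\,\cH^{\dmndim-1}\mres\rbr{\varepsilon_h J_w}$ against $\cL^\dmndim$-absolutely-continuous limits, a fact that follows from the $Y$-periodicity of the trace jump $\abs{w\p - w\m}$ on $\microgeom$ and the local finiteness of $\cH^{\dmndim-1}\mres\microgeom$ on bounded sets, testing against continuous functions and handling the slight discrepancy at $\partial\openset$ with the $\limsup$ in~\eqref{eq:int_representation_1}.
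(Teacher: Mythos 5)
Your overall strategy is the same as the paper's: build the rescaled recovery sequence $u\subepsh(x) = u_\mtx(x) + \varepsilon_h\,\tilde w(x/\varepsilon_h) = \varepsilon_h\,w(x/\varepsilon_h)$ and apply the $\prfGamma\liminf$ inequality to the target $u_\mtx$. However, the step you single out as ``the main obstacle'' --- proving weak-$*$ convergence of the rescaled surface measures for a general rectifiable periodic set $\microgeom$ and a general $BD\txtsub{loc}$ competitor --- is not actually needed, and the paper avoids it entirely. Because you only need an \emph{upper} bound on $\liminf_h \cfun\subepsh(u\subepsh,\cdot)$, not the exact limit, the clean choice is to work on $\openset = Y$ and use bare cube-counting: rescaling the bulk integral gives $\tfrac{\varepsilon_h^\dmndim}{2}\int_{Y/\varepsilon_h}\norm{\cE w}^2\,\mathrm{d}x$, and since $\cE w$ is $Y$-periodic and $Y/\varepsilon_h$ is covered by $\ipbr{1+1/\varepsilon_h}^\dmndim$ translated unit cells, this is $\leq \tfrac{\varepsilon_h^\dmndim}{2}\ipbr{1+1/\varepsilon_h}^\dmndim\int_Y\norm{\cE w}^2\,\mathrm{d}x$, whose prefactor tends to $1$. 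The identical one-line estimate works for the surface term, since the jump $\abs{w^+ - w^-}$ is $Y$-periodic on $J_w\subseteq\microgeom$ (because $\tilde w$ is $Y$-periodic and $u_\mtx$ is continuous). This sidesteps both the Riemann--Lebesgue lemma and any measure-theoretic homogenization of $\cH^{\dmndim-1}\mres\varepsilon_h\microgeom$, and makes the boundary-effect worry moot since $Y/\varepsilon_h$ is a cube. Incidentally, your proposed rescaled measure $\mu_h = \varepsilon_h^{\dmndim-1}\abs{w^+-w^-}(\cdot/\varepsilon_h)\,\cH^{\dmndim-1}\mres(\varepsilon_h J_w)$ carries the wrong power of $\varepsilon_h$: the surface energy of $u\subepsh$ is $\varepsilon_h\int_{\varepsilon_h J_w}\abs{w^+(x/\varepsilon_h)-w^-(x/\varepsilon_h)}\,\mathrm{d}\cH^{\dmndim-1}(x)$, so the correct weight is $\varepsilon_h$ (giving mass $\varepsilon_h^\dmndim\int_{Y\cap J_w}\abs{w^+-w^-}\,\mathrm{d}\cH^{\dmndim-1}$ on each $\varepsilon_h$-cube, hence a nontrivial weak-$*$ limit), whereas your $\varepsilon_h^{\dmndim-1}$ factor would collapse to zero for $\dmndim\geq 2$. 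In short: correct plan, but replace the measure-convergence argument with elementary periodicity and cube counting, and take $\openset$ to be the unit cell.
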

\begin{proof}
For a fixed $\mtx \in \mtxspace$, let $u \in BD\txtsub{loc}\at{\R{\dmndim}}$ be a test function for the computation of $\cden\hom$, i.e.~such that $J_u \subseteq \microgeom$, $\rbr{u\p-u\m}\odot \nu_u \in \cone_0 \,\, \cH^{n-1}\text{-a.e.}$ and $\tilde{u} = u-u_{\mtx}$ is $Y$-periodic. Moreover, let $\rbr{\varepsilon_h}$ be a sequence of positive numbers converging to $0$. Upon noticing that the sequence of scaled functions~$u\subepsh\at{x} = \varepsilon_h u\at{x/\varepsilon_h}$ converges to $u_{\mtx}$ in $L^2\at{Y; \R{\rngdim}}$, by the liminf inequality of the $\Gamma$-convergence, it follows that
\begin{align}
	\begin{aligned}
		\cden\at{\mtx} 
			&= \cfun\at{u_{\mtx}, Y} \leq \liminf_{h} \cfun\subepsh\at{u\subepsh, Y}
			= \liminf_{h} \rbr{ 
				\frac{1}{2}\intvol{Y} {\norm{{ \cE u\subepsh}}^2}
				+\intsurf{Y} {\abs{u\subepsh\p - u\subepsh\m}}}.
	\end{aligned}
\label{eq:upper_estimate}
\end{align}
For the bulk energy term we obtain
\begin{align}
	\begin{aligned}
		\frac{1}{2}\intvol{Y} {\norm{{ \cE u\subepsh}}^2}
			= \frac{\varepsilon_h^n}{2} \intvol{Y/\varepsilon_h} { \norm{{\cE u}}^2}  
			\leq \frac{\varepsilon_h^n}{2} \ipbr{1+\frac{1}{\varepsilon_h}}^n \intvol{Y} { \norm{{\cE u}}^2},
	\end{aligned}
\end{align}
whereas for the surface energy term
\begin{align}
	\begin{aligned}
		\intsurf{Y} {\abs{u\subepsh\p - u\subepsh\m}}
			= \varepsilon_h^{n} \int_{Y/\varepsilon_h} {\abs{u\p - u\m} \mathrm{d}\cH^{n-1}}
			\leq \varepsilon_h^n \ipbr{1+\frac{1}{\varepsilon_h}}^n \intsurf{Y} {\abs{u\p - u\m}}.
	\end{aligned}
\end{align}
Finally, from \eqref{eq:upper_estimate} we get
\begin{equation}
	\cden\at{\mtx}  \leq \frac{1}{2}\intvol{Y} { \norm{{\cE u}}^2} + \intsurf{Y} {\abs{u\p - u\m}},
\end{equation}
whence, taking the supremum over functions~$u \in BD\txtsub{loc}\at{\R{\dmndim}}$ such that $J_u \subseteq \microgeom$, $\rbr{u\p-u\m}\odot \nu_u \in \cone_0 \,\, \cH^{n-1}\text{-a.e.}$, we obtain the desired result. \qed
\end{proof}

\begin{proposition}\label{thm:f_hom_geq}
	$\normalfont \cden\hom\at{\mtx} \leq \cden\at{\mtx}$ for every $\mtx \in \mtxspace$.
\end{proposition}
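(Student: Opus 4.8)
The plan is to build, out of a recovery sequence for $\cfun\at{u_{\mtx}, \cdot}$ on a fixed cube, an admissible competitor for the cell problem~\eqref{eq:f_hom}. Since $\cden$ depends only on the symmetric part of its argument by Proposition~\ref{prop:Buttazzo_DalMaso}, and $\cden\hom$ does as well (adding to a competitor a function with skew gradient alters neither the admissibility nor the energy), we may assume $\mtx \in \symspace$. Fix a cube $Q \subseteq \dmn$ of side $\ell$, so that $\cden\at{\mtx} = \cfun\at{u_{\mtx}, Q}/\abs{Q}$ by Proposition~\ref{prop:Buttazzo_DalMaso}(ii), and let $u_h \in \cU\subepsh\at{Q}$ be a recovery sequence, i.e.~$u_h \to u_{\mtx}$ in $L^2\at{Q; \R\rngdim}$ and $\cfun\subepsh\at{u_h, Q} \to \abs{Q}\cden\at{\mtx}$ (such a sequence can be taken in $\cU\subepsh\at{Q}$, as $\cfun\at{u_{\mtx}, Q} < \infty$ by Proposition~\ref{prop:Buttazzo_DalMaso}(i)).

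\emph{Step 1 (affine boundary values).} Given $\eta > 0$ and $\delta > 0$, I apply the fundamental estimate (Proposition~\ref{prop:fund_est}) with $u = u_h$, $v = u_{\mtx}$, and concentric cubes arranged so that the cut-off $\varphi$ is supported well inside $Q$, equals $1$ on most of $Q$, and the shell $S$ has measure at most $c_{\dmndim}\,\delta\abs{Q}$. By the convexity assumption~\eqref{eq:cone_0_hp}, $v_h := \varphi u_h + \rbr{1 - \varphi}u_{\mtx}$ lies in $\cU\subepsh\at{Q}$; moreover $v_h = u_{\mtx}$ in a neighbourhood of $\partial Q$ and, using $\cfun\subepsh\at{u_{\mtx}, A} = \tfrac12\norm{\mtx}^2\abs{A}$ together with $\norm{u_h - u_{\mtx}}_{L^2(Q)} \to 0$,
\begin{equation}
	\limsup_h \cfun\subepsh\at{v_h, Q} \leq \rbr{1 + \eta}\rbr{\abs{Q}\cden\at{\mtx} + c_{\dmndim}\,\delta\,\norm{\mtx}^2\abs{Q}}.
\end{equation}

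\emph{Step 2 (rescaling, extension, periodisation, averaging).} Put $w_h\at{z} = \varepsilon_h^{-1} v_h\at{\varepsilon_h z}$ on $\varepsilon_h^{-1}Q$: then $J_{w_h} \subseteq \microgeom$, the jumps still lie in $\cone_0$ since it is a cone, $w_h = u_{\mtx}$ near $\partial\rbr{\varepsilon_h^{-1}Q}$, and a change of variables gives
\begin{equation}
	\tfrac12\intvol{\varepsilon_h^{-1}Q}{\norm{\cE w_h}^2} + \intsurf{J_{w_h} \cap \varepsilon_h^{-1}Q}{\abs{w_h\p - w_h\m}} = \varepsilon_h^{-\dmndim}\,\cfun\subepsh\at{v_h, Q}.
\end{equation}
Pick $N_h \in \N$ with $N_h\varepsilon_h \to \ell$ and $k_h \in \Z\dmndim$ with $\varepsilon_h^{-1}Q \subset\subset R_h := k_h + \rbr{0, N_h}^{\dmndim}$, and extend $w_h$ by $u_{\mtx}$ on $R_h \setminus \overline{\varepsilon_h^{-1}Q}$, which adds to the cell energy only $\tfrac12\norm{\mtx}^2\bigl(N_h^{\dmndim} - \varepsilon_h^{-\dmndim}\ell^{\dmndim}\bigr)$, a quantity of order $\varepsilon_h^{1-\dmndim}$. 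Since the extended $w_h$ equals $u_{\mtx}$ near $\partial R_h$, the function $w_h - u_{\mtx}$ extends $R_h$-periodically, and its jump set remains inside $\microgeom$ because $\microgeom$ is $\Z\dmndim$-periodic and $N_h \in \N$. Finally I average over the $N_h^{\dmndim}$ unit sub-cubes, $\overline w_h\at{y} := N_h^{-\dmndim}\sum_{j \in \cbr{0, \dots{}, N_h - 1}^{\dmndim}}\rbr{w_h\at{y + j} - \mtx j}$. A telescoping computation shows $\overline w_h - u_{\mtx}$ is $Y$-periodic; $J_{\overline w_h} \subseteq \microgeom$ by periodicity of $\microgeom$; the unilateral constraint passes to $\overline w_h$ because at each jump point the translated jumps share the normal to $\microgeom$ up to orientation and $\cone_0$ is stable under~\eqref{eq:cone_0_hp} and positive dilations; and convexity of $\norm{\cdot}^2$ together with the triangle inequality for the surface term bound the cell energy of $\overline w_h$ on $Y$ by $N_h^{-\dmndim}$ times the cell energy of the extended $w_h$ on $R_h$.

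\emph{Conclusion.} Since $\overline w_h$ is admissible in~\eqref{eq:f_hom}, Steps 1 and 2 yield
\begin{equation}
	\cden\hom\at{\mtx} \leq N_h^{-\dmndim}\varepsilon_h^{-\dmndim}\cfun\subepsh\at{v_h, Q} + o(1) \quad (h \to \infty),
\end{equation}
because $N_h^{-\dmndim}\varepsilon_h^{1-\dmndim} \to 0$; letting $h \to \infty$ with $N_h^{\dmndim}\varepsilon_h^{\dmndim} \to \ell^{\dmndim} = \abs{Q}$ and invoking Step 1 gives $\cden\hom\at{\mtx} \leq \rbr{1 + \eta}\bigl(\cden\at{\mtx} + c_{\dmndim}\,\delta\,\norm{\mtx}^2\bigr)$, whence $\delta \to 0$ and then $\eta \to 0$ produce $\cden\hom\at{\mtx} \leq \cden\at{\mtx}$. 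The main obstacle is the scale mismatch — the recovery sequence resolves $\microgeom$ at scale $\varepsilon_h$, whereas~\eqref{eq:f_hom} is posed at scale one — which forces the detour through the large cube $R_h$; the delicate point is the averaging, where convexity of the bulk density, the structural hypothesis~\eqref{eq:cone_0_hp} on $\cone_0$, and the $\Z\dmndim$-periodicity of $\microgeom$ must be combined to manufacture an admissible $Y$-periodic competitor whose energy is controlled by the spatial average.
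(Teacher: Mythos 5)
Your proposal is correct and follows essentially the same route as the paper: take a recovery sequence for $\cfun\at{u_\mtx,\cdot}$ on a cube, cut off to impose affine boundary data near the boundary, rescale by $\varepsilon_h^{-1}$, extend by $u_\mtx$ to an integer-sided cube, periodise, and then average over the integer lattice translations to manufacture a $Y$-periodic competitor for~\eqref{eq:f_hom}, using condition~\eqref{eq:cone_0_hp} and the $\Z{\dmndim}$-periodicity of $\microgeom$ to preserve admissibility under averaging. The only cosmetic differences are that you work on a generic cube $Q$ and invoke Proposition~\ref{prop:fund_est} for the cut-off step, whereas the paper works directly on the unit cube $Y$ with an explicit cut-off function and carries out the corresponding estimate in-line.
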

\begin{proof}
Let $\mtx \in \mtxspace$. We can consider a sequence~$\rbr{u\subepsh}$, with $u\subepsh \in \cU\subepsh\at{Y}$ for all $h \in \N$, converging to $u_{\mtx}$ and such that
\begin{equation}
	\cden\at{\mtx}
		= \intvol{Y}{\cden\at{\mtx}}
		= \cfun\at{u_{\mtx}, Y}
		= \lim_{h} \cfun\subepsh\at{u\subepsh, Y}.
\end{equation}
Let $\delta > 0$. We define~$\phi\at{y} = \max\cbr{\rbr{\delta^{-1}\dist\at{y,\partial Y}}, 1}$ and~$S_{\delta} = \cbr{y \in Y \bdot \dist\at{y,\partial Y} < \delta}$. Moreover we set
\begin{equation}
	v\subepsh = \phi \, u\subepsh + \rbr{1-\phi}u_{\mtx}, 
\end{equation}
and consider the extension of~$v\subepsh$ to a function defined on~$\varepsilon_h \ipbr{1 + 1/\varepsilon_h} Y$ as
\begin{equation}
	w\subepsh\at{y} = \begin{dcases}
		v\subepsh\at{y}, & \text{ if } y \in Y, \\
		u_{\mtx}\at{y}, & \text{ if } y \in \varepsilon_h \ipbr{1 + 1/\varepsilon_h}Y \setminus Y.
	\end{dcases}
\end{equation}
The function~$w\subepsh$ is extended to all~$\R{\dmndim}$ by requiring~$\tilde{w}\subepsh =w\subepsh - u_{\mtx}$ to be $\varepsilon_h \ipbr{1 + 1/\varepsilon_h} Y$-periodic. By the continuity of~$u_{\mtx}$, it follows that~$w\subepsh \in \cU\subepsh\at{Y}$.  We now claim that the function
\begin{equation}
	z\subepsh\at{y} = \frac{1}{\varepsilon_h \ipbr{1 + 1/\varepsilon_h}^n} \sum_{k \in \cbr{0, \dots, \ipbr{1/\varepsilon_h}}^n} w\subepsh\at{\varepsilon_h y + \varepsilon_h k},
\end{equation}
is an admissible test function for the computation of~$\cden\hom\at{\mtx}$. In fact, by the Y-periodicity of $\microgeom$ and since~$J_{w\subepsh} \subseteq \varepsilon_h \microgeom$, we derive
\begin{equation}
	J_{z\subepsh}  
		\subseteq \bigcup_{k \in \cbr{0, \dots, \ipbr{1/\varepsilon_h}}^n} J_{w\subepsh\rbr{\varepsilon_h \,\cdot\, + \varepsilon_h k}}
		\subseteq \bigcup_{k \in \cbr{0, \dots, \ipbr{1/\varepsilon_h}}^n} \rbr{-k + J_{w\subepsh}/\varepsilon_h}
		\subseteq \microgeom;
\end{equation}
moreover, since $\rbr{w\subepsh\p - w\subepsh\m} \odot \nu_{w\subepsh} \in \cone_{0} \,\, \cH^{n-1}\text{-a.e.}$, it follows that $\rbr{z\subepsh\p - z\subepsh\m} \odot \nu_{z\subepsh} \in \cone_{0} \,\, \cH^{n-1}\text{-a.e}$. In order to show that $\tilde{z}\subepsh = z\subepsh - u_{\mtx}$ is Y-periodic, it suffices to notice that, for $r \in \cbr{0, \dots, \ipbr{1/\varepsilon_h}}$ and $e_l$ denoting the $l$-th vector of the canonical base of $\R{\dmndim}$, we have
\begin{align}
	\begin{aligned}
		\tilde{z}\subepsh\at{y+re_j} 
			&=  \frac{1}{\varepsilon_h \ipbr{1 + 1/\varepsilon_h}^n} \sum_{k \in \cbr{0, \dots, \ipbr{1/\varepsilon_h}}^n} \cbr{ \tilde{w}\subepsh\at{\varepsilon_h y + \varepsilon_h \rbr{k+re_j}} + \varepsilon_h  \mtx k } \\
			&=  \frac{1}{\varepsilon_h \ipbr{1 + 1/\varepsilon_h}^n} \sum_{k \in \cbr{0, \dots, \ipbr{1/\varepsilon_h}}^n} \cbr{ \tilde{w}\subepsh\at{\varepsilon_h y + \varepsilon_h k} + \varepsilon_h  \mtx k }
			= \tilde{z}\subepsh\at{y},
	\end{aligned}
\end{align}
where the $\varepsilon_h \ipbr{1 + 1/\varepsilon_h} Y$-periodicity of $\tilde{w}\subepsh$ has been exploited. 

Using $z\subepsh$ as a test function in the computation of $\cden\hom\at\mtx$, it follows that
\begin{equation}
	\cden\hom\at{\mtx} \leq \frac{1}{2}\int_{Y} { \norm{\cE z\subepsh}^2 \mathrm{d}y} 
		+ \intsurf{Y \cap J_{z\subepsh}}{\abs{z\subepsh\p-z\subepsh\m}},
\label{eq:lower_estimate_1}
\end{equation}
and we have to estimate the two terms. 
For the bulk energy term, by the $Y$-periodicity of~$\tilde{z}\subepsh$ and by the $\varepsilon_h \ipbr{1 + 1/\varepsilon_h} Y$-periodicity of $\tilde{w}\subepsh$ we derive 
\begin{align}
	\begin{aligned}
			\int_{Y} { \norm{\cE z\subepsh}^2 \mathrm{d}y} 
			&= \frac{1}{\ipbr{1+{1/\varepsilon_h}}^n} \int_{\ipbr{1 + \frac{1}{\varepsilon_h}} Y} { \norm{\cE z\subepsh}^2 \mathrm{d}y} \\[1ex]
			&\leq \frac{1}{\ipbr{1+1/\varepsilon_h}^{n}} \int_{\varepsilon_h\ipbr{1 + \frac{1}{\varepsilon_h}} Y} {\norm{{ \cE w\subepsh}}^2 \mathrm{d}y} \\[1ex]				
			&\leq \int_{Y} {\norm{{ \cE v\subepsh}}^2 \mathrm{d}y} + \norm{\mtx\symsup}^2 \rbr{\varepsilon_h^n\ipbr{1 + {1\over\varepsilon_h}}^n - 1}.
	\end{aligned}
\end{align}
In particular, for~$\eta > 0$ and $c_\eta > 0$ a suitable constant, the first term can be estimated by
\begin{align}
	\begin{aligned}
		\int_{Y} {\norm{{ \cE v\subepsh}}^2 \mathrm{d}y} 
			&= \int_{Y \setminus S_{\delta}} {\norm{{ \cE u\subepsh}}^2 \mathrm{d}y}
				+\int_{S_{\delta}} {\norm{{ \phi \cE u\subepsh + \rbr{1-\phi}\mtx\symsup +\nabla \phi \odot \rbr{u\subepsh - u_{\mtx}} }}^2 \mathrm{d}y} \\[1ex]	
			&\leq \rbr{1+\eta}\int_{Y} {\norm{{ \cE u\subepsh}}^2 \mathrm{d}y}	
				+c_\eta \abs{S_{\delta}} \norm{\mtx\symsup}^2 
				+ \frac{c_\eta}{\delta^2} \int_{S_{\delta}} { \abs{u\subepsh - u_{\mtx}}^2 \mathrm{d}y}.
	\end{aligned}
\end{align}
Analogously, for the surface energy term the $Y$-periodicity of~$\tilde{z}\subepsh$ implies
\begin{align}
	\begin{aligned}
			\intsurf{Y \cap J_{z\subepsh}}{\abs{z\subepsh\p-z\subepsh\m}}
			&= \frac{1}{\ipbr{1+1/\varepsilon_h}^n} \intsurf{\ipbr{1 + \frac{1}{\varepsilon_h}} Y} { \abs{z\subepsh\p - z\subepsh\m}}\\[1ex]
			&= \frac{1}{\ipbr{1+1/\varepsilon_h}^{n}} \intsurf{\varepsilon_h\ipbr{1 + \frac{1}{\varepsilon_h}} Y} {\abs{w\subepsh\p - w\subepsh\m}} \\[1ex]
			&\leq \intsurf{Y} {\abs{u\subepsh\p - u\subepsh\m}}.
	\end{aligned}
\end{align}
Accordingly, from \eqref{eq:lower_estimate_1} we obtain
\begin{align}
	\begin{aligned}
		\cden\hom\at{\mtx} 
			&\leq \frac{1}{2}\rbr{1+\eta}\int_{Y} {\norm{{ \cE u\subepsh}}^2 \mathrm{d}y}	
			+\frac{c_\eta}{2} \abs{S_{\delta}} \norm{\mtx\symsup}^2 
			+ \frac{c_\eta}{2\delta^2} \int_{S_{\delta}} { \abs{u\subepsh - u_{\mtx}}^2 \mathrm{d}y} \\[1ex]
			& \quad+ \norm{\mtx\symsup}^2 \rbr{\varepsilon_h^n\ipbr{1 + {1\over\varepsilon_h}}^n - 1}
			+\intsurf{Y} {\abs{u\subepsh\p - u\subepsh\m}}.
	\end{aligned}
\end{align}
As $u\subepsh$ converges to $u_{\mtx}$ in $L^2\at{Y; \R\rngdim}$ and~$\abs{S_{\delta}} = 1-\rbr{1-2\delta}^2 \rightarrow 0$ for~$\delta \rightarrow 0\p$, by letting first $h \rightarrow + \infty$, then~$\delta \rightarrow 0\p$ and finally~$\eta \rightarrow 0\p$, we get
\begin{equation}
	\cden\hom\at{\mtx} 
		\leq \lim_{h} 
		\rbr{\frac{1}{2}\int_{Y} {\norm{{ \cE u\subepsh}}^2 \mathrm{d}y} +\int_{Y} {\abs{u\subepsh\p - u\subepsh\m}}}
		= \lim_{h} \cfun\subepsh\at{u_{\varepsilon_h}, Y} 
		= \cden\at{\mtx},
\end{equation}
and the proof is concluded. \qed
\end{proof}

We conclude this section with a corollary immediately descending from Propositions \ref{prop:Buttazzo_DalMaso}, \ref{thm:f_hom_leq} and \ref{thm:f_hom_geq}, which characterizes the $\prfGamma$limit $\cfun$ of a convergent sequence of functionals $\rbr{\cfun\subepsh}$ on Sobolev functions.
\begin{corollary}[($\prfGamma$convergence on $H^1$)]
For every~$\openset \in \cA_{0}\at{\dmn}$, the family of functionals $\rbr{\cfun\subepsh\at{\cdot, \openset}}$ $\prfGamma$converges on $H^1\at{\openset; \R{\rngdim}}$, with respect to the $L^2\at{\dmn;\R\rngdim}$-topology, to the functional $\normalfont \cfun\hom\at{\cdot, \openset}$.
\label{cor:int_rep}
\end{corollary}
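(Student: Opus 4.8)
The plan is simply to assemble the three preceding propositions; the corollary contributes no new analytic content. First I would fix $\openset \in \cA_{0}\at\dmn$ and recall that, by Proposition~\ref{prop:Buttazzo_DalMaso}, the $\prfGamma$limit $\cfun$ of~\eqref{eq:Gamma_converging_subsequence} admits on Sobolev functions the integral representation $\cfun\at{u, \openset} = \intvol{\openset}{\cden\at{\cE u}}$ for $u \in H^1\at{\openset; \R\rngdim}$, with $\cden$ a unique convex density of at most quadratic growth depending on its argument only through the symmetric part. Then I would invoke Propositions~\ref{thm:f_hom_leq} and~\ref{thm:f_hom_geq}, which give the two inequalities $\cden\at\mtx \le \cden\hom\at\mtx$ and $\cden\hom\at\mtx \le \cden\at\mtx$, hence $\cden = \cden\hom$, the density of the cell formula~\eqref{eq:f_hom}. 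Substituting this identification into the representation yields $\cfun\at{u, \openset} = \intvol{\openset}{\cden\hom\at{\cE u}} = \cfun\hom\at{u, \openset}$ for every $u \in H^1\at{\openset; \R\rngdim}$, the right-hand side being finite there by the bound $\cden\hom\at\mtx \le c_2 \abs\mtx^2$ of Proposition~\ref{prop:growth}.

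To phrase this as the asserted $\prfGamma$convergence on $H^1\at{\openset; \R\rngdim}$ I would argue as follows. By~\eqref{eq:Gamma_converging_subsequence} the functionals $\cfun\subepsh\at{\cdot, \openset}$ already $\prfGamma$converge, in the $L^2$-topology, to $\cfun\at{\cdot, \openset}$ on $BD\at\openset \cap L^2\at{\openset; \R\rngdim}$. For the lower-bound inequality, given $u_h \to u$ in $L^2\at{\openset; \R\rngdim}$ with $u \in H^1\at{\openset; \R\rngdim}$, either $\liminf_h \cfun\subepsh\at{u_h, \openset} = +\infty$ or, along a subsequence, $u_h \in \cU\subepsh\at\openset \subseteq BD\at\openset \cap L^2\at{\openset; \R\rngdim}$, so the available inequality applies with limit $\cfun\at{u, \openset} = \cfun\hom\at{u, \openset}$; for the recovery inequality, the approximating sequence for $u$ supplied by the $\prfGamma$convergence on $BD\at\openset \cap L^2\at{\openset; \R\rngdim}$ (which need not be Sobolev) realises exactly $\cfun\hom\at{u, \openset}$ in the limit. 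Hence both $\prfGamma$convergence inequalities hold with limit $\cfun\hom\at{\cdot, \openset}$ on $H^1\at{\openset; \R\rngdim}$.

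I do not expect any genuine obstacle here: all the substantive work is in the three cited propositions, and this step is their bookkeeping consequence. The only points I would be mildly careful about are that Propositions~\ref{thm:f_hom_leq}--\ref{thm:f_hom_geq} are phrased for general matrices while~\eqref{eq:f_hom} only sees the symmetric part (harmless, since $\cden\at\mtx = \cden\at{\mtx\symsup}$ by Proposition~\ref{prop:Buttazzo_DalMaso}, so $\cden = \cden\hom$ need only be checked on symmetric matrices), and the distinction between the $\prfGamma$limit computed on $BD\at\openset \cap L^2\at{\openset; \R\rngdim}$ and its restriction to $H^1$ (immaterial, because recovery sequences are allowed to leave $H^1$). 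It is worth emphasising that the identification $\cden = \cden\hom$ shows the $\prfGamma$limit on $H^1$ is independent of the chosen sequence $\rbr{\varepsilon_h}$; together with the compactness of Proposition~\ref{prop:compactness}, this is precisely what will later upgrade the subsequential statement to $\prfGamma$convergence of the whole family $\rbr{\cfun\subeps}$.
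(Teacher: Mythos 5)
Your proof is correct and tracks the paper's own (the authors explicitly present the corollary as "immediately descending" from Propositions~\ref{prop:Buttazzo_DalMaso}, \ref{thm:f_hom_leq} and~\ref{thm:f_hom_geq}): one identifies $\cden=\cden\hom$ via the two inequalities, inserts this into the integral representation from Proposition~\ref{prop:Buttazzo_DalMaso}, and notes that for $u\in H^1\at{\openset;\R\rngdim}$ the singular term in $\cfun\hom$ vanishes so the result coincides with $\cfun\hom\at{u,\openset}$. Your additional remarks on the meaning of "$\prfGamma$convergence on $H^1$" (recovery sequences need not be Sobolev) and on symmetric-matrix dependence are accurate clarifications, not deviations.
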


\subsection{Characterization of the homogenized functional}
In this section we conclude the proof of Theorem~\ref{thm:cohesive_main} by showing that $\cfun\hom\at{\cdot, \dmn}$ is the $\prfGamma$limit of $\cfun\subeps\at{\cdot, \dmn}$ on $BD\at\dmn \cap L^2\at{\dmn; \R\rngdim}$.

As a preliminary step, we prove a lower-semicontinuity result on the functional $\cfun\hom$.
\begin{proposition}\label{thm:F_hom_lsc}
	For every $\openset \in \cA_0\at\dmn$, the functional $\normalfont \cfun\hom\at{\cdot, \openset}$ is lower semicontinuous on $BD\at\openset \cap L^2\at{\openset; \R\rngdim}$ with respect to the weak convergence in $BD\at\openset$.
\end{proposition}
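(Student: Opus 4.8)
The plan is to prove lower semicontinuity of $\cfun\hom\at{\cdot, \openset}$ by recognizing that the energy density $\cden\hom$ is a convex function on $\symspace$ with at most quadratic growth from above and superlinear-minus-a-constant growth from below (Proposition~\ref{prop:growth}), so that $\cfun\hom$ is exactly the functional associated to a convex integrand of linear growth acting on a $BD$ function in the sense of the notation introduced in Section~\ref{s:notation}; that is, $\cfun\hom\at{u,\openset} = \int_{\openset}{\cden\hom\at{\wstn u}}$ whenever $u \in \cU\hom\at\openset$. The core of the argument is therefore a standard Reshetnyak-type lower-semicontinuity statement for convex functionals of measures, adapted to the space $BD$, together with a check that the \emph{constraint} $u \in \cU\hom\at\openset$, i.e.~$P_{\cone\hom^{\perp}}\wstn\sing u = 0$, is stable under weak $BD$ convergence.

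First I would dispose of the growth issue: since $\cden\hom$ is convex and satisfies $c_1(\abs\mtx - 1) \le \cden\hom\at\mtx \le c_2\abs\mtx^2$, its recession function $\cden\hom^\infty$ is convex, positively one-homogeneous, finite exactly on the cone $\cone\hom = \dom(\cden\hom^\infty)$, and the pairing $\int_\openset \cden\hom\at{\wstn u}$ makes sense for every $u \in BD\at\openset$ because $\cden\hom$ has linear growth along the directions in which $\wstn\sing u$ can point only when those directions lie in $\cone\hom$ — which is precisely what membership in $\cU\hom\at\openset$ guarantees (on the orthogonal complement the recession function is $+\infty$, but there $\wstn\sing u$ has no mass). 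Then I would take a sequence $u_h \rightharpoonup u$ weakly in $BD\at\openset$ with $\sup_h \cfun\hom\at{u_h,\openset} < +\infty$; the uniform energy bound forces, via the lower growth bound, $\sup_h \abs{\wstn u_h}\at\openset < +\infty$ and keeps $u_h \in \cU\hom\at\openset$. Passing to a subsequence, $\wstn u_h \rightharpoonup^* \wstn u$ in $\cM\at{\openset;\symspace}$. The constraint $P_{\cone\hom^{\perp}}\wstn\sing u = 0$ passes to the limit because $\cone\hom^\perp$-valued directions of the singular parts are exactly where $\cden\hom^\infty = +\infty$, and any mass of $\wstn\sing u$ escaping into that region would be penalized with infinite energy, contradicting lower semicontinuity of the recession term; more concretely one can argue that the closed convex constraint set $\{\mu : P_{\cone\hom^\perp}\mu^{\text{s}} = 0\}$ is weak-$*$ sequentially closed in the relevant energy-bounded class, by the very same Reshetnyak argument applied to the $+\infty$-valued integrand, or equivalently by testing against smooth compactly supported matrix fields valued in $\cone\hom^\perp$.

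The substantive step is the lower-semicontinuity inequality $\cfun\hom\at{u,\openset} \le \liminf_h \cfun\hom\at{u_h,\openset}$, which I would obtain from the classical result on lower semicontinuity of $\mu \mapsto \int_\openset f(\mu)$ for $f$ convex with linear growth under weak-$*$ convergence of measures (Reshetnyak / Ambrosio--Dal Maso / Goffman--Serrin in the $BV$ setting, transported to $BD$ as in \cite{Temam_1985, Ambrosio_DalMaso_ARMA_1997}). The only point requiring care is that $\cden\hom$ has merely quadratic — not linear — growth from above, so the standard $L^1$-type statements do not apply verbatim on the absolutely continuous part; however, on the bulk part one uses convexity and the weak $L^1$ (indeed weak $L^2$, thanks to the a~priori bound on $\cfun\hom\at{u_h,\openset}$ which controls $\norm{\cE u_h}^2$ where the integrand is quadratic) lower semicontinuity of $\int \cden\hom\at{\cE u}$, while on the singular part only the one-homogeneous recession function enters, to which the measure-theoretic lower-semicontinuity result applies cleanly. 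Splitting $\liminf$ via the localization/blow-up method (choosing mutually singular open/Borel pieces carrying the absolutely continuous and singular parts separately, as is routine for functionals of measures) then assembles the two contributions.

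The main obstacle I anticipate is precisely the mismatch between the at-most-quadratic growth of $\cden\hom$ and the linear-growth hypotheses of the off-the-shelf Reshetnyak-type theorems: one cannot simply invoke a single known lower-semicontinuity statement, and must instead combine (a) weak-$L^2$ lower semicontinuity of the convex bulk integrand with (b) Reshetnyak lower semicontinuity of the one-homogeneous recession term on the singular measure, keeping careful track that the two pieces do not interfere — the interaction terms in a blow-up are controlled because $\cden\hom^\infty$ is exactly the growth rate of $\cden\hom$ at infinity, so the absolutely continuous part contributes nothing to the recession term and vice versa. A secondary, but routine, technical point is verifying closedness of the admissibility constraint under weak $BD$ convergence, which I expect to handle by the duality/test-field argument sketched above.
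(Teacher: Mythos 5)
Your proposal is sound in spirit but takes a genuinely different, and considerably heavier, route than the paper does. You correctly locate the difficulty (the quadratic growth of $\cden\hom$ puts it outside the linear-growth hypotheses of the most commonly cited Reshetnyak statements), but the way you propose to get around it --- splitting the measure into absolutely continuous and singular parts, proving weak-$L^2$ lower semicontinuity for the bulk integrand and a separate Reshetnyak argument for the recession term, then recombining by blow-up/localization while ``keeping track that the two pieces do not interfere'' --- is exactly the part that is technically delicate: the absolutely continuous parts of $\wstn u_h$ may concentrate into the singular part of $\wstn u$, and controlling that interaction is what a careful Fonseca--M\"uller-type blow-up has to do. It can be done, but it is a lot of work.

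The paper avoids all of this with a \emph{truncation from above}: for $\lambda>0$ it forms the infimal convolution $\varphi_\lambda(\mtx)=\inf_{\mtxalt}\{\cden\hom\at{\mtxalt}+\lambda\abs{\mtx-\mtxalt}\}$, which is convex, satisfies $\varphi_\lambda\leq\lambda\abs{\cdot}$ (hence has linear growth, so the classical Reshetnyak lower-semicontinuity theorem applies directly to $\mu\mapsto\int\varphi_\lambda(\mu)$ under weak-$*$ convergence of $\wstn u_h$), satisfies $\varphi_\lambda\leq\cden\hom$ so $\int_\openset\varphi_\lambda(\wstn u_h)\leq\cfun\hom\at{u_h,\openset}$, and increases to $\cden\hom$ as $\lambda\uparrow\infty$. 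Passing first $h\to\infty$ via Reshetnyak and then $\lambda\to\infty$ via Fatou gives the inequality with no splitting and no blow-up. Note also that the constraint $u\in\cU\hom\at\openset$ does not need a separate argument: it is automatically encoded in the recession-function term, since $\cden\hom^\infty=+\infty$ on $\symspace\setminus\cone\hom$, so any singular mass escaping $\cone\hom$ in the limit would make the right-hand side of the liminf inequality $+\infty$ --- a point your proposal circles around but could simply absorb. One more remark: the Goffman--Serrin reference in the bibliography is precisely the template for this truncation argument, so there \emph{is} essentially a single off-the-shelf statement once you know how to approximate the superlinear integrand from below by Lipschitz convex ones; your perceived ``main obstacle'' dissolves with that observation.
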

\begin{proof}
Let $\openset \in \cA_0\at\dmn$. We consider a sequence $\rbr{u_h}$ in $BD\at\openset \cap L^2\at{\openset; \R\rngdim}$ weakly converging to a function $u$ in $BD\at\openset$. Accordingly, there exists a subsequence~$\rbr{u_{h_{j}}}$ such that $\wstn u_{h_{j}}$ weakly$^{*}$ converges to $\wstn u$ in $\cM\at{\openset; \mtxspace}$.
For $\lambda > 0$, let 
\begin{equation}
	\varphi_{\lambda}\at{\mtx} = \inf_{\mtxalt \in \mtxspace} \cbr{\cden\hom\at{\mtxalt} +  \lambda \abs{\mtx-\mtxalt}}
\end{equation}
denote the infimal convolution of $\cden\hom$ and $\lambda\abs{\cdot}$. Then $\varphi_{\lambda}$ enjoys the following properties: (i) $\varphi_{\lambda} \leq \lambda\abs{\cdot}$, (ii) $\varphi_{\lambda}$ is convex and (iii) $\varphi_{\lambda}$ increasingly converges to $\cden\hom$ for $\lambda \uparrow \infty$. Using a classical theorem by Reshetnyak on convex functional of measures \cite{Reshetnyak_SMJ_1968}, properties (i) and (ii) imply that
\begin{equation}
	\int_{\openset} {\varphi_{\lambda}\at{\wstn u}} 
		\leq \liminf_h \int_{\openset} {\varphi_{\lambda}\at{\wstn u_h}}
		\leq \liminf_h \cfun\hom\at{u_h, \openset},
\end{equation}
whence by Fatou's lemma
\begin{equation}
	\cfun\hom\at{u, \openset} \leq \liminf_h \cfun\hom\at{u_h, \openset},
\end{equation}
which concludes the proof. \qed
\end{proof}

We are now in position to prove the $\prfGamma$liminf inequality. A convexity method through convolution is employed to exploit the integral representation of the $\prfGamma$limit on Sobolev functions.
\begin{proposition}\label{thm:liminf}
Let $\rbr{\varepsilon_h}$ be a sequence of positive numbers converging to $0$. Then there exists a subsequence $\rbr{\varepsilon_{\sigma\at{h}}}$ of $\rbr{\varepsilon_h}$ such that
$\normalfont\cfun\hom\at{u, \dmn} \leq \prfGamma\liminf_{h} \cfun_{\varepsilon_{\sigma\at{h}}}\at{u, \dmn}$ for every $\normalfont u \in BD\at\dmn \cap L^2\at{\dmn; \R\rngdim}$.
\end{proposition}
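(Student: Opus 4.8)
The strategy is the standard one for proving $\Gamma$-liminf inequalities in homogenization with non-standard growth: first pass to a subsequence along which the $\Gamma$-limit exists as a set function (via Proposition~\ref{prop:compactness}), then use the integral representation on $H^1$ (Corollary~\ref{cor:int_rep}), and finally bridge from Sobolev functions to general $BD$ functions by a mollification argument, exploiting the lower semicontinuity just established (Proposition~\ref{thm:F_hom_lsc}). Concretely, fix $u \in BD\at\dmn \cap L^2\at{\dmn; \R\rngdim}$ and a sequence $\rbr{u_h}$ with $u_h \to u$ in $L^2\at{\dmn; \R\rngdim}$; we may assume $\liminf_h \cfun_{\varepsilon_{\sigma\at{h}}}\at{u_h, \dmn}$ is finite, so that (up to a further subsequence) the energies are bounded and, since $\cfun\subeps\at{u_h,\dmn}$ controls $\intvol{\dmn\setminus\varepsilon\microgeom}{\abs{\cE u_h}^2}$ and $\intsurf{J_{u_h}}{\abs{u_h\p-u_h\m}}$, the growth bound from Proposition~\ref{prop:growth}-type estimates gives $\rbr{\abs{\wstn u_h}\at\dmn}$ bounded, hence $u_h \rightharpoonup u$ weakly in $BD\at\dmn$.

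\emph{Reduction to open subsets and localization.} Using Proposition~\ref{prop:compactness}, pass to a subsequence (not relabeled) so that $\cfun\at{\cdot,\openset} = \prfGamma\lim_h \cfun_{\varepsilon_{\sigma\at h}}\at{\cdot,\openset}$ exists for every $\openset \in \cA_0\at\dmn$ and $\cfun\at{u,\cdot}$ is a Borel measure. It suffices to show $\cfun\hom\at{u,\openset} \leq \cfun\at{u,\openset}$ for all $\openset \in \cA_0\at\dmn$ with $\openset \subset\subset \dmn$, since then an inner-regularity/supremum-over-$\openset$ argument yields the inequality on $\dmn$. On a fixed such $\openset$, mollify: set $u^\rho = u * \varrho_\rho$ on a slightly smaller open set $\openset' \subset\subset \openset$, so $u^\rho \in H^1\at{\openset'; \R\rngdim}$ and $u^\rho \to u$ in $L^1$ and weakly-$^*$ in $BD\at{\openset'}$ as $\rho \to 0$. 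The key point is the convexity inequality for convex functions of measures under convolution: since $\cden\hom$ is convex (by Proposition~\ref{prop:Buttazzo_DalMaso}, $\cden = \cden\hom$ is convex), Jensen's inequality gives $\intvol{\openset'}{\cden\hom\at{\cE u^\rho}} \leq \cfun\hom\at{u,\openset}$ — more precisely, using the representation $\cfun\hom\at{u,\cdot} = \int_{\cdot}\cden\hom\at{\wstn u}$ as a convex functional of the measure $\wstn u$, convolution does not increase it (a Jensen-type estimate in the spirit of the Reshetnyak-based argument in Proposition~\ref{thm:F_hom_lsc}).

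\emph{Putting it together.} For the mollified function $u^\rho$, since $u^\rho \in H^1\at{\openset'; \R\rngdim}$ we may invoke Corollary~\ref{cor:int_rep}: $\cfun\hom\at{u^\rho,\openset'} = \prfGamma\lim_h \cfun_{\varepsilon_{\sigma\at h}}\at{u^\rho,\openset'}$. Now one wants to ``glue'' the recovery-type estimate for $u^\rho$ with the given sequence $u_h$ for $u$ on the annulus $\openset \setminus \openset'$, using the fundamental estimate of Proposition~\ref{prop:fund_est} with a cutoff between $\openset'$ and $\openset$: taking a diagonal sequence and using that $\norm{u^\rho - u_h}_{L^2}$ is small (first $h\to\infty$ then $\rho\to0$), one deduces $\cfun\hom\at{u^\rho,\openset'} \leq \cfun\at{u,\openset} + (\text{error})$; here we actually want the liminf inequality, so the argument runs: $\cfun\at{u,\openset} \geq \cfun\at{u^\rho,\openset')} - o(1)$ via the fundamental estimate applied in the ``reverse'' direction (gluing $u_h$ outside, $u^\rho$-recovery inside is not quite it — rather one uses that $\cfun$ itself, being a $\Gamma$-limit, satisfies the fundamental estimate, so $\cfun\at{u,\openset}$ is bounded below by $\cfun\at{u^\rho, \openset') }$ minus an $L^2$-error that vanishes). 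Then $\cfun\at{u^\rho,\openset') = \cfun\hom\at{u^\rho,\openset') = \intvol{\openset'}{\cden\hom\at{\cE u^\rho}}$, and by the convexity/Jensen step and lower semicontinuity (Proposition~\ref{thm:F_hom_lsc}) we pass $\rho\to0$: $\liminf_\rho \intvol{\openset'}{\cden\hom\at{\cE u^\rho}} \geq \cfun\hom\at{u,\openset'}$, hence $\cfun\at{u,\openset} \geq \cfun\hom\at{u,\openset'}$; letting $\openset' \uparrow \openset$ (inner regularity of the measure $\cfun\hom\at{u,\cdot}$) and then $\openset \uparrow \dmn$ finishes the proof.

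\emph{Main obstacle.} The delicate step is the gluing across $\openset \setminus \openset'$: one must combine the $H^1$-integral representation (valid only on smooth functions on a smaller set) with the behavior of $\cfun$ near $\partial\openset$, controlling the interaction energy on the annulus. This is exactly where Proposition~\ref{prop:fund_est} enters — the fundamental estimate lets one absorb the mismatch between $u^\rho$ and the competitor sequence into an $L^2\at{S}$-term that is made arbitrarily small by shrinking the annulus and using $u^\rho \to u$ in $L^2$, at the cost of a multiplicative $(1+\eta)$ which is removed by letting $\eta \to 0$. Managing the order of limits ($h\to\infty$, then $\rho\to0$, then $\openset'\uparrow\openset$, then $\eta\to0$) and ensuring the Cantor part of $\wstn u$ is correctly captured by $\cden\hom^\infty$ through the Reshetnyak-type lower semicontinuity is the technical heart of the argument.
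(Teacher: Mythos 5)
Your overall skeleton is close to the paper's: pass to a $\Gamma$-converging subsequence via Proposition~\ref{prop:compactness}, exhaust $\dmn$ by sets $\openset_j\subset\subset\dmn$, mollify $u_h=\rho_h*u$ so that Corollary~\ref{cor:int_rep} applies, and conclude by the lower semicontinuity of $\cfun\hom$ from Proposition~\ref{thm:F_hom_lsc}. The gap is precisely in the connecting step you yourself flag as ``not quite it'': you need
\begin{equation}
\cfun\at{u_h,\openset_j}\le \cfun\at{u,\dmn}
\end{equation}
for $h$ large, but attempt to obtain it from the fundamental estimate of Proposition~\ref{prop:fund_est} ``in the reverse direction.'' That proposition gives an \emph{upper} bound on the energy of a glued competitor in terms of the two original ones; it does not yield the lower bound $\cfun\at{u,\openset}\ge\cfun\at{u^\rho,\openset'}-o(1)$ you want, and since $\cfun\at{\cdot,\openset'}$ is lower semicontinuous on $L^2$ the natural inequality you have in hand ($\cfun\at{u,\openset'}\le\liminf_\rho\cfun\at{u^\rho,\openset'}$) runs the wrong way.

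The paper closes this step without the fundamental estimate at all, using two facts you have but do not combine: (i) the $\Gamma$-limit $\cfun\at{\cdot,\openset}$, being a $\Gamma$-limit of convex functionals, is itself convex on $L^2\at{\dmn;\R\rngdim}$; (ii) $\cfun$ is translation invariant (Lemma~\ref{lemma:translation_invariance}) and monotone in the set argument (Proposition~\ref{prop:compactness}). Then, writing $u_h=\int_{B_h}\rho_h\at{y}\,\tau_y u\,\mathrm{d}y$ and applying Jensen's inequality \emph{to the functional} $\cfun\at{\cdot,\openset_j}$ (not merely to the integrand $\cden\hom$), one gets
\begin{equation}
\cfun\at{u_h,\openset_j}\le\int_{B_h}\rho_h\at{y}\,\cfun\at{\tau_y u,\openset_j}\,\mathrm{d}y
=\int_{B_h}\rho_h\at{y}\,\cfun\at{u,\tau_{-y}\openset_j}\,\mathrm{d}y\le\cfun\at{u,\dmn},
\end{equation}
provided $1/h<\dist\at{\openset_j,\partial\dmn}$ so that $\tau_{-y}\openset_j\subseteq\dmn$. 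Combined with Corollary~\ref{cor:int_rep} (giving $\cfun\at{u_h,\openset_j}=\cfun\hom\at{u_h,\openset_j}$) and Proposition~\ref{thm:F_hom_lsc} (giving $\cfun\hom\at{u,\openset_j}\le\liminf_h\cfun\hom\at{u_h,\openset_j}$), this yields $\cfun\hom\at{u,\openset_j}\le\cfun\at{u,\dmn}$, and the supremum over $j$ finishes. Replacing your fundamental-estimate ``gluing'' by this functional-level Jensen/translation-invariance argument is the missing idea; the rest of your proposal is consistent with the paper.
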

\begin{proof}
Let $\rbr{\openset_j}$ be a sequence in $\cA_{0}\at\dmn$, with $\openset_j \subset\subset \dmn$ for all $j \in \N$, converging increasingly to~$\dmn$. By Proposition~\ref{prop:compactness}, there exists a subsequence $\rbr{\varepsilon_{\sigma\rbr{h}}}$ of $\rbr{\varepsilon_h}$ such that
\begin{equation}
	\cfun\at{\cdot,\openset} = \prfGamma\lim_h \cfun_{\varepsilon_{\sigma\rbr{h}}}\at{\cdot,\openset}
\end{equation}
for all $\openset = \openset_j$ with $j \in \N$, and for $\openset = \dmn$. 
Now, fix $u \in BD\at\dmn \cap L^2\at{\dmn; \R\rngdim}$ and let $\rbr{\rho_h}$ be a sequence of mollifiers such that $\rho_h$ has support in the open ball in $\R\dmndim$ of center $0$ and radius $1/h$, denoted $B_h$. Setting $u_h = \rho_h \ast u $, we notice that $u_h \in C^{\infty}_{0}\at{\dmn; \R\rngdim}$, $u_h$ converges to $u$ in $L^2\at{\dmn; \R\rngdim}$ and $\wstn u_h$ weakly$^*$ converges to $\wstn u$ in $\cM\at{\dmn; \mtxspace}$, i.e.~$u_h$ weakly converges to $u$ in $BD\at\dmn$. By Corollary \ref{cor:int_rep}, we derive
\begin{equation}
	\cfun\hom\at{u_h, \openset_j}
	= \intvol{\openset_j}{\cden\hom\at{\cE u_h}}
	= \cfun\at{u_h, \openset_j}.
\label{eq:liminf_1}	
\end{equation}
Next, we observe that the functional $\cfun\at{\cdot, \openset}$ is lower semicontinuous and convex on $L^2\at{\dmn; \R\rngdim}$ for every $\openset \in \cA_{0}\at\dmn$. For $h \in \N$ such that $1/h < \dist\at{A_j, \partial\dmn}$, Jensen's inequality implies that
\begin{equation}
	\cfun\at{u_h, \openset_j}
		= \cfun\at{\int_{B_h}{ \rho_h\at{y} \tau_{y}{u} \,\mathrm{d}y},  \openset_j}
		\leq \int_{B_h}{ \rho_h\at{y} \cfun\at{\tau_{y}{u},  \openset_j} \mathrm{d}y}.
\end{equation}
Since $\cfun$ is translation invariant (Lemma \ref{lemma:translation_invariance}), we obtain
\begin{equation}
	\cfun\at{u_h, \openset_j} \leq \int_{B_h}{ \rho_h\at{y} \cfun\at{u, \tau_{-y}\openset_j} \mathrm{d}y}.
\end{equation}
In particular, because $\tau_{-y}\openset_j \subseteq \dmn$ for every $y \in B_h$, we get
\begin{equation}
	\cfun\at{u_h, \openset_j} \leq \int_{B_h}{ \rho_h\at{y} \cfun\at{u, \dmn} \mathrm{d}y} = \cfun\at{u, \dmn}.
\label{eq:liminf_2}
\end{equation}
By Proposition~\ref{thm:F_hom_lsc}, the functional $\cfun\hom$ is lower semicontinuous on $BD\at{\openset_j} \cap L^2\at{\openset_j; \R\rngdim}$ with respect to the weak convergence in $BD\at{\openset_j}$. Then, from \eqref{eq:liminf_1} and \eqref{eq:liminf_2} we derive
\begin{equation}
	\cfun\hom\at{u, \openset_j} \leq \liminf_h \cfun\hom\at{u_h, \openset_j} \leq \cfun\at{u, \dmn}
\label{eq:dom}
\end{equation}
for all $j \in \N$. Taking the supremum with respect to $j \in \N$, we conclude
\begin{equation}
	\cfun\hom\at{u, \dmn} \leq \cfun\at{u, \dmn} = \prfGamma\liminf_h \cfun_{\varepsilon_{\sigma\at{h}}}\at{u,\dmn},
\end{equation}
and the proof is accomplished. \qed
\end{proof}

Next we prove the $\prfGamma$limsup inequality.
\begin{proposition}\label{thm:limsup}
Let $\rbr{\varepsilon_h}$ be a sequence of positive numbers converging to $0$. Then there exists a subsequence $\rbr{\varepsilon_{\sigma\at{h}}}$ of $\rbr{\varepsilon_h}$ such that
$\prfGamma\limsup_{h} \cfun_{\varepsilon_{\sigma\at{h}}}\at{u, \dmn} \leq \normalfont\cfun\hom\at{u, \dmn}$ for every $\normalfont u \in BD\at\dmn \cap L^2\at{\dmn; \R\rngdim}$.
\end{proposition}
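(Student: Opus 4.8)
The plan is to establish the $\prfGamma$limsup inequality by constructing a recovery sequence, first for a dense class of target functions and then extending by a density and lower-semicontinuity argument. First I would fix $u \in BD\at\dmn \cap L^2\at{\dmn;\R\rngdim}$ lying in $\cU\hom\at\dmn$ (otherwise $\cfun\hom\at{u,\dmn} = +\infty$ and there is nothing to prove), and by Proposition~\ref{prop:compactness} pass to a subsequence $\rbr{\varepsilon_{\sigma\at{h}}}$ for which the $\prfGamma$limit $\cfun$ exists on every $\openset_j$ in an increasing exhaustion of $\dmn$ as well as on $\dmn$ itself. By Corollary~\ref{cor:int_rep} we already have the recovery sequence on Sobolev functions, so if $u \in H^1\at{\dmn;\R\rngdim}$ the estimate $\prfGamma\limsup_h \cfun_{\varepsilon_{\sigma\at{h}}}\at{u,\dmn} = \cfun\at{u,\dmn} = \intvol{\dmn}{\cden\hom\at{\cE u}} = \cfun\hom\at{u,\dmn}$ is immediate, using that in this case $\wstn\sing u = 0$ so $\cU\hom$ imposes no constraint.

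For general $u$, the key step is an approximation in energy: I would build a sequence $\rbr{u_k}$ in $H^1\at{\dmn;\R\rngdim}$ (or at least in the domain where the integral representation and the $L^2$-convergence of recovery sequences apply) with $u_k \to u$ in $L^2$ and $\cfun\hom\at{u_k,\dmn} \to \cfun\hom\at{u,\dmn}$. Mollification is the natural candidate: set $u_k = \rho_k \ast u$, which is smooth, converges in $L^2$, and converges weakly in $BD$. The upper bound on $\cfun\hom\at{u_k,\dmn}$ should follow from Jensen's inequality together with the translation invariance of $\cfun\hom$ (via Lemma~\ref{lemma:translation_invariance} applied to the density $\cden\hom$, exactly as in the proof of Proposition~\ref{thm:liminf}): one gets $\cfun\hom\at{u_k, \openset_j} \le \int_{B_k} \rho_k\at{y}\,\cfun\hom\at{u, \dmn}\,\mathrm{d}y = \cfun\hom\at{u,\dmn}$ for $\openset_j \subset\subset \dmn$ with $1/k < \dist\at{\openset_j,\partial\dmn}$. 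Combined with the lower semicontinuity of $\cfun\hom\at{\cdot,\openset_j}$ from Proposition~\ref{thm:F_hom_lsc}, this pins $\cfun\hom\at{u_k,\openset_j} \to \cfun\hom\at{u,\openset_j}$ as $k\to\infty$, and then a supremum over $j$ recovers $\dmn$.

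Next I would invoke the standard stability of $\prfGamma\limsup$ under such approximation: since $\prfGamma\limsup_h \cfun_{\varepsilon_{\sigma\at{h}}}\at{\cdot,\dmn}$ is $L^2$-lower semicontinuous (as a $\prfGamma\limsup$ always is), and for each $u_k$ we have $\prfGamma\limsup_h \cfun_{\varepsilon_{\sigma\at{h}}}\at{u_k,\dmn} \le \cfun\hom\at{u_k,\dmn}$ (from the Sobolev case plus a further diagonalization extracting a sub-subsequence along which $\cfun\at{u_k,\dmn}$ is the actual limit), we obtain
\begin{equation}
	\prfGamma\limsup_h \cfun_{\varepsilon_{\sigma\at{h}}}\at{u,\dmn}
	\le \liminf_k \prfGamma\limsup_h \cfun_{\varepsilon_{\sigma\at{h}}}\at{u_k,\dmn}
	\le \liminf_k \cfun\hom\at{u_k,\dmn} = \cfun\hom\at{u,\dmn}.
\end{equation}
A subtlety that must be handled carefully: for a truly singular $u$ the mollified $u_k$ do not reproduce the singular part of $\wstn u$, and the upper-bound chain through Jensen must be applied to $\cfun\hom$ as a \emph{convex functional of the measure $\wstn u$} (using the recession function $\cden\hom^{\infty}$), not just to the bulk integrand; this is exactly where the structure $\cfun\hom\at{u,\dmn} = \int_\dmn \cden\hom\at{\wstn u}$ in the sense of the Notation section is used, and where one checks that the approximating $u_k$ belong to $\cU\hom\at\dmn$ so the functional stays finite.

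The main obstacle I anticipate is precisely this interplay between the nonstandard growth of $\cden\hom$ (only linear-from-below over $\cone\hom$, quadratic elsewhere) and the need to recover the singular part: the Jensen/translation-invariance argument gives $\cfun\hom\at{u_k,\dmn} \le \cfun\hom\at{u,\dmn}$ only if one correctly interprets $\cfun\hom$ as a Reshetnyak-type convex functional of measures and verifies that mollification preserves membership in $\cU\hom\at\dmn$ (equivalently, that the constraint $P_{\cone\hom^{\perp}}\wstn\sing u = 0$ passes to $u_k$, which follows because convolution commutes with the linear projection on the measure). Once that bookkeeping is in place, the diagonal extraction of $\rbr{\varepsilon_{\sigma\at{h}}}$ and the final limsup estimate are routine, and together with Proposition~\ref{thm:liminf} this completes the proof of Theorem~\ref{thm:cohesive_main}.
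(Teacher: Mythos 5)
Your plan follows the paper's proof in all essentials: mollify, transfer the estimate through the smooth approximants via Corollary~\ref{cor:int_rep}, and close via lower semicontinuity of the $\prfGamma$limsup combined with the upper bound $\cfun\hom\at{\rho_k \ast u, \cdot} \leq \cfun\hom\at{u, \dmn}$. The only meaningful difference is how that bound is obtained: the paper cites Lemma~5.2 of \cite{Temam_1985}, which yields $\cfun\hom\at{u_k, \dmn} = \int_\dmn \cden\hom\at{\rho_k \ast \wstn u} \leq \int_\dmn \cden\hom\at{\wstn u} = \cfun\hom\at{u, \dmn}$ directly on $\dmn$, whereas your re-derivation from translation invariance and Jensen (as in Proposition~\ref{thm:liminf}) only gives the bound on $\openset_j \subset\subset \dmn$ with $1/k < \dist\at{\openset_j, \partial\dmn}$, so the final chain should be run on $\openset_j$ and closed by a supremum over $j$, not stated directly on $\dmn$ as in your display. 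Two small simplifications: your worry about $u_k \in \cU\hom\at\dmn$ is vacuous since $u_k$ is smooth (so $\wstn\sing u_k = 0$); and you do not need Proposition~\ref{thm:F_hom_lsc} to obtain $\cfun\hom\at{u_k, \openset_j} \to \cfun\hom\at{u, \openset_j}$ --- the one-sided inequality together with $\liminf_k$ is all the argument requires.
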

\begin{proof}
By Proposition~\ref{prop:compactness}, there exists a subsequence $\rbr{\varepsilon_{\sigma\rbr{h}}}$ of $\rbr{\varepsilon_h}$ such that
\begin{equation}
	\cfun\at{\cdot,\openset} = \prfGamma\lim_h \cfun_{\varepsilon_{\sigma\at{h}}}\at{\cdot,\openset}
\end{equation}
for all $\openset \in \cA_0\at\dmn$ on $L^2\at{\openset; \R\rngdim}$. If $u \in L^2\at{\dmn; \R\rngdim} \setminus \cU\hom\at\dmn$, the claim is accomplished because $\cfun\hom\at{u, \dmn} = + \infty$; then, fix $u \in \cU\hom\at{\dmn}$. As in the previous proof, let $\rbr{\rho_j}$ be a sequence of mollifiers such that $\rho_j$ has support in $B_j$ and set $u_j = \rho_j \ast u$. Then $u_j \in C^{\infty}_{0}\at{\dmn; \R\rngdim}$ and $u_j$ converges to $u$ in $L^2\at{\dmn; \R\rngdim}$. By the lower semicontinuity of the~$\Gamma$-limsup and Corollary \ref{cor:int_rep}, we derive
\begin{equation}
	\prfGamma\limsup_{h} \cfun_{\varepsilon_{\sigma\at{h}}}\at{u,\dmn}
		\leq \liminf_j \rbr{ \prfGamma\limsup_{h} \cfun_{\varepsilon_{\sigma\at{h}}}\at{u_j,\dmn} } 
		= \liminf_j \cfun\hom\at{u_j, \dmn}.
\end{equation}
Then, Lemma 5.2 in \cite{Temam_1985} implies
\begin{equation}
	\cfun\hom\at{u_j, \dmn} 
		= \intvol{\dmn}{\cden\hom\at{\wstn u_j}}
		= \intvol{\dmn}{\cden\hom\at{\rho_j \ast \wstn u}}
		\leq \int_{\dmn}{\cden\hom\at{\wstn u}}
		= \cfun\hom\at{\dmn}.
\end{equation}
Finally we obtain
\begin{equation}
	\prfGamma\limsup_{h} \cfun_{\varepsilon_{\sigma\at{h}}}\at{u,\dmn} \leq \cfun\hom\at{u, \dmn},
\end{equation}
which is the desired result. \qed
\end{proof}

\section{Conclusions and perspectives}\label{s:conclusions}
In the present work we have discussed a homogenization result dealing with masonry structures constituted by an assemblage of blocks with interposed mortar joints. Departing point of the derivation has been regarding such structures as a periodic collection of disconnected sets (blocks) interacting through their boundaries (interfaces). Accordingly, we have considered a sequence of energy functionals (scaling with the size of the microgeometry) on the space of special functions of bounded deformation comprising (i) a linear elastic behavior in the blocks, (ii) a Barenblatt's cohesive contribution at interfaces and (iii) a unilateral condition on the strain across interfaces. Exploiting the notion of $\prfGamma$convergence, we have analyzed the asymptotic behavior of such energy functionals, thus obtaining a simple homogenization formula for the limit energy. We have investigated the behavior of the limit energy, highlighting its mathematical and mechanical main properties. Among them, we have in particular focused on the non-standard growth conditions under tension or compression. 

Various additional questions and several perspectives arise from the presented results. One is to generalize this approach to the case of a more complicated energy contribution on interfaces, for instance mimicking a plastic behavior or friction. Furthermore, most of these problems can be rephrased in a nonlinearly elastic framework. Finally, yet requiring to abandon many of the techniques exploiting the convexity assumption here adopted, an interesting direction of investigation might be the extension to the Griffith theory of brittle fracture.

\subsection*{Acknowledgments}
The authors acknowledge the MIUR Excellence Department Project awarded to the Department of Mathematics, University of Rome Tor Vergata, CUP E83C18000100006.


\begin{thebibliography}{33}


\bibitem{Addessi_Sacco_IJSS_2012} {Addessi, D. and Sacco, E.},
	 {A multi-scale enriched model for the analysis of masonry panels},
	{\it Int. J. Solids Struct.}	{\bf49},
	 {2012},
	 {865--880}
	 
	 
\bibitem{Ambrosio_DalMaso_ARMA_1997} {Ambrosio, L. and Coscia, A. and Dal Maso, G.},
	 {{F}ine {P}roperties of {F}unctions with {B}ounded {D}eformation},
	{\it Arch. Ration. Mech. An.},
	{\bf139},
	 {1997},
	 {201--238}
	 
	 
\bibitem{Ambrosio_Fusco_Pallara_2000}
 {Ambrosio, L. and Fusco, N. and Pallara, D.},
	 {{F}unctions of {B}ounded {V}ariation and {F}ree {D}iscontinuity {P}roblems},
{Oxford University Press}, Oxford,
	 {2000}


\bibitem{Ansini_ChiadoPiat_BUMI_1999} {Ansini, N. and Braides, A. and Chiad{\`o} Piat, V.},
	 {Homogenization of periodic multi-dimensional structures},
	{\it B. Unione Mat. Ital.}, 	{\bf2},
	 {1999},
	 {735--758}
	 
	 \bibitem{Anzellotti_AIHP_1985} {Anzellotti, G.},
	 {A class of convex non-coercive functionals and masonry-like materials},
	{\it Ann. I. H. Poincar{\`e}-AN},
	{\bf2},
	 {1985},
	 {261--307}
	 
	 \bibitem{Anzellotti_DalMaso_NATMA_1986} {Anzellotti, G. and Buttazzo, G. and Dal Maso, G.},
	 {Dirichlet problem for demi-coercive functionals},
	{\it Nonlinear Anal.-Theor.}, {\bf10},
	 {1986}
	 
	 \bibitem{Anzellotti_Giaquinta_MM_1980} {Anzellotti, G. and Giaquinta, M.},
	 {Existence of the {D}isplacement {F}ield for an {E}lasto-plastic {B}ody {S}ubject to {H}enky's {L}aw and {V}on {M}ises' {Y}ield {C}ondition},
	{\it Manuscripta Math.}, 	{\bf32},
	 {1980},
	 {101--136}

\bibitem{Anzellotti_Giaquinta_JMPA_1982} {Anzellotti, G. and Giaquinta, M.},
	 {On the existence of the fields of stresses and displacements for an elasto-perfectly plastic body in static equilibrium},
	{\it J. Math. Pures Appl.},
	{\bf61},
	 {1982}
	 {219--244}
	 
	 \bibitem{Barenblatt_1962} {Barenblatt, G.~I.},
	 {{T}he {M}athematical {T}heory of {E}quilibrium {C}racks in {B}rittle {F}racture},
	{\bf7},
	 {1962},
	{\it Adv. Appl. Mech.},
	 {55--129}
	 
	 
	 
\bibitem{Braides_GC_2002}
{Braides, A.}, 	 {$\Gamma$-{C}onvergence for {B}eginners},
 {Oxford University Press}, Oxford,
	 {2002}

	 \bibitem{Braides_ChiadoPiat_2000}
{Braides, A. and Chiad{\`o} Piat, V.}, 	 {Another brick in the wall},
	in {Variational problems in materials science},
 {Dal Maso, G. and de Simone, A. and Tomarelli, F. eds.},
{Progress in Nonlinear Differential Equations and Their Applications},
{Birkh{\"a}user},
	{\bf68}, 2006,	 {13--24}

\bibitem{Braides_Coscia_PRSE_1994} {Braides, A. and Coscia, A.},
	 {The interaction between bulk energy and surface energy in multiple integrals},
	{\it P. Roy. Soc. Edinb. A},
	{\bf124},
	 {1994},
	 {737--756}
	 
	 
\bibitem{Braides_H_1998} {Braides A. and  Defranceschi, A.}
	 {Homogenization of Multiple Integrals},
{Oxford University Press}, Oxford,
	 {1998}

\bibitem{Braides_Vitali_ARMA_1996} {Braides, A. and Defranceschi, A. and Vitali, E.},
	 {Homogenization of free discontinuity problems},
	{\it Arch. Ration. Mech. An.}	{\bf135},
	 {1996},
	 {297--356}
	 
\bibitem{Braides_Vitali_AMO_1997} {Braides, A. and Defranceschi, A. and Vitali, E.},
	 {A relaxation approach to Hencky's plasticity},
	{\it Appl. Math. Optim.},
	{\bf35},
	 {1997},
	 {45--68}
	  
\bibitem{Braides_Vitali_ASNSP_2001} {Braides, A. and Defranceschi, A. and Vitali, E.},
	 {{R}elaxation of {E}lastic {E}nergies with {F}ree {D}iscontinuities and {C}onstraint on the {S}train},
	{\it Ann. Scuola Norm-Sci.},
	{\bf1},
	 {2001}
	 {275--317}

	 \bibitem{Buttazzo_1989}
{Buttazzo, G.},
	 {{S}emicontinuity, {R}elaxation and {I}ntegral {R}epresentation in the {C}alculus of {V}ariations},
{Longman}, HArlow,	 {1989}

\bibitem{Buttazzo_DalMaso_NATMA_1985} {Buttazzo, G. and Dal Maso, G.},
	 {Integral representation and relaxation of local functionals},
	{\it Nonlinear Anal.-Theor.},
	{\bf9},
	 {1985},
	 {515--532}
	 
	 
	 \bibitem{Conti_Iurlano_PRS_2017} {Conti, S. and Focardi, M. and Iurlano, F.},
	 {Which special functions of bounded deformation have bounded variation?},
	{\it P. Roy. Soc. Edinb. A},
	{\bf148},
	 {2017},
	 {33--50}
	 
	 
\bibitem{DalMaso_1993}
 {Dal Maso, G.},
	 {{A}n {I}ntroduction to $\Gamma$-{C}onvergence},
{Birkh{\"a}user}, Basel
	 {1993}

	 
\bibitem{DeGiorgi_1975} {De Giorgi, E. and Franzoni, T.},
	 {Su un tipo di convergenza variazionale},
	{\it Atti Accad. Naz. Lincei Rend. Cl. Sci. Fis. Mat. Natur.},
	{\bf58},
	 {1975},
	 {842--850}
	 
	 
	 
\bibitem{DelPiero_1998} {Del Piero, G.},
	 {Limit analysis and no-tension materials},
	{\it Int. J. Plasticity},
	{\bf14},
	 {1998},
	 {259--171}

\bibitem{Evans_Gariepy_1992} {Evans, L.~C. and Gariepy, R.~F.},
	 {{M}easure {T}heory and {F}ine {P}roperties of {F}unctions},
{CRC Press}, {Boca Raton}
	 {1992}


\bibitem{Folland_1999} {Folland, G.~B.},
	 {{R}eal {A}nalysis: {M}odern {T}echniques and {T}heir {A}pplications},
 {John Wiley \& Sons Inc.}, Chichester, UK,
	 {1999}


\bibitem{Francfort_Giacomini_JEMS_2014} {Francfort, G. and Giacomini, A.},
	 {On periodic homogenization in perfect elasto-plasticity},
	{\it J. Eur. Math. Soc.},
	{\bf16},
	 {2014},
	 {409--461}
	 
\bibitem{Gambarotta_Lagomarsino_EESD_1997} {Gambarotta, L. and Lagomarsino, S.},
{Damage models for the seismic response of brick masonry shear walls. {P}art {I}: the mortar joint model and its applications}
	{\it Earthq. Eng. Struct. Dyn.},
	{\bf26},
	 {1997},
	 {423--439}

\bibitem{Giaquinta_Giusti_ARMA_1985} {Giaquinta, M. and Giusti, E.},
	 {{R}esearches on the {E}quilibrium of {M}asonry {S}tructures},
	{\it Arch. Ration. Mech. An.},
	{\bf88},
	 {1985},
	 {359--392}

\bibitem{Girardi_Pellegrini_MMS_2018} {Girardi, M. and Padovani, C. and Pellegrini, D.},
	 {Modal analysis of masonry structures},
	{\it Math. Mech. Solids}	{\bf24},
	 {2019},
	 {616-636}
	 
	 	\bibitem{Goffmann_Serrin_1964} {Goffman, C. and Serrin, J.},
	 {Sublinear functions of measures and variational integrals},
	{\it Duke. Math. J.},
	{\bf31},
	 {1964},
	 {159--178}
	 

\bibitem{Kohn_PhDthesis_1979} {Kohn, R.~V.},
	 {New Estimates for Deformations in Terms of Their Strains},
PhD Thesis, {Princeton University},
	 {1979}


\bibitem{Marfia_Sacco_CMAME_2012} {Marfia, S. and Sacco, E.},
	 {Multiscale damage contact-friction model for periodic masonry walls},
	{\it Comput. Methods Appl. Mech. Engrg.}	{\bf205--208},
	 {2012},
	 {189--203}
	 
	 
\bibitem{Milani_CS_2011} {Milani, G.},
	 {Simple homogenization model for the non-linear analysis of in-plane loaded masonry walls},
	{\it Comput. Struct.},	{\bf89},
	 {2011},
	 {1586--1601}
	 
\bibitem{Nodargi_Bisegna_2019} {Nodargi, N.~A. and Intrigila, C. and Bisegna, P.},
	 {A variational-based fixed-point algorithm for the limit analysis of dry-masonry block structures with non-associative {C}oulomb friction}, Preprint,
	 {2019}


\bibitem{Oliveira_Lourenco_CS_2004}
	{Oliveira, D.~V. and Louren\c{c}o, P.~B.},
	{Implementation and validation of a constitutive model for the cyclic behaviour of interface elements},
{\it Comput. Struct.}, {\bf82},
	 {2004},
	 {1451--1461},
	 	
	 
\bibitem{Pela_Roca_CBM_2013} {Pel{\`a}, L. and Cervera, M. and and Roca, P.},
	 {An orthotropic damage model for the analysis of masonry structures},
	{\it Constr. Build. Mater.},
	{\bf41},
	 {2013},
	 {957--967}

\bibitem{Reshetnyak_SMJ_1968} {Reshetnyak, Y.~G.},
	 {Weak convergence of completely additive vector functions on a set},
	{\it Siberian Math. J.},
	{\bf9},
	 {1968},
	 {1039--1045}
	 
	 \bibitem{Rockafellar_1970}
{Rockafellar, R.~T.}, 	 {{C}onvex {A}nalysis},
{Princeton University Press}, Princeton,
	 {1970}
	 
	 \bibitem{Rudin_2001} W. Rudin
	 {Real and complex analysis},
{McGraw-Hill}, Singapore
	 {2001}
	 
\bibitem{Stelzig_ESAIM_2012} {Stelzig, P.~E.},
	 {Homogenization of many-body structures subject to large deformations},
	{\it ESAIM Contr. Optim. Ca.},
	{\bf18},
	 {2012},
	 {91--123}

\bibitem{Suquet_1979} {Suquet, P.},
	 {Un espace fonctionnel pour les {\'e}quations de la plasticit{\'e},
	{\it Ann. Fac. Sci. Tolouse},	{\bf1},
	 {1979},
	 {77--87}

\bibitem{Suquet_1978} {Suquet, P.},
	 {Existence et r{\'e}gularit{\'e} des solutions des {\'e}quations de la plasticit{\'e} parfaite},
	{\it C. R. Acad. Sci. Paris S{\'e}r A}, 	{\bf286},
	 {1978},
	 {1201--1204}
	 
	 

	
\bibitem{Temam_1985}
{Temam, R.},
	 {Probl{\`e}mes math{\'e}matiques en plasticit{\'e},
{Gauthier-Villars},
Paris,
	 {1983}

\bibitem{Temam_Strang_ARMA_1980} {Temam, R. and Strang, G.},
	 {{F}unctions of {B}ounded {D}eformations},
	{\it Arch. Ration. Mech. An.},
	{\bf75},
	 {1980},
	 {7--21}





	 
	 


















	 

	 

	



	 }}
\end{thebibliography}
\end{document}